\newtheorem{theorem}{Theorem}[section]
\newtheorem{lemma}[theorem]{Lemma}
\newtheorem{corollary}[theorem]{Corollary}
\newtheorem{conjecture}[theorem]{Conjecture}
\newtheorem{claim}[theorem]{Claim}
\theoremstyle{definition}
\theoremstyle{remark}
\newtheorem*{remark}{Remark}
\DeclareMathOperator{\Bin}{Bin}
\newcommand{\EE}{\mathbb{E}}
\newcommand{\PP}{\mathbb{P}}
\newcommand{\sgn}{\mathrm{sgn}}
\newcommand{\Bi}{\mathrm{Bin}}
\newcommand{\pp}{\scaleto{++}{4pt}}
\newcommand{\mm}{\scaleto{--}{4pt}}
\newcommand{\mpl}{\scaleto{-+}{4pt}}
\newcommand{\pmi}{\scaleto{+-}{4pt}}
\newcommand{\eps}{\varepsilon}
\tikzstyle{p}+=[fill=black, circle, minimum width = 1pt, inner sep =
\tikzstyle{w}+=[fill=white, draw, circle, minimum width = 1pt, inner sep =
\begin{document}

\title{Majority dynamics on sparse random graphs}

\author{
Debsoumya Chakraborti\thanks{
Discrete Mathematics Group, Institute for Basic Science (IBS), Daejeon, Republic of Korea. Supported by the Institute for Basic Science (IBS-R029-C1).
E-mail: {\tt
debsoumya@ibs.re.kr}.}\and 
Jeong Han Kim\thanks{School of Computational Sciences, Korea Institute for Advanced Study (KIAS), Seoul, Republic of Korea. Supported in part by National Research Foundation of Korea (NRF) Grants funded by the Korean Government (MSIP) (NRF-2012R1A2A2A01018585 \& 2017R1E1A1A03070701) and by KIAS Individual Grant (CG046001).
%at Korea Institute of Advanced Study. %This work was partially carried out while the author were visiting the Discrete Mathematics Group of Institute for Basic Science (IBS), Daejeon,  Republic of Korea.
}\and
Joonkyung Lee\thanks{
Department of Mathematics, University College London, Gower Street, London WC1E 6BT, UK.
E-mail: {\tt
joonkyung.lee@ucl.ac.uk}. Supported by IMSS Research Fellowship.}\and
Tuan Tran\thanks{
Discrete Mathematics Group, Institute for Basic Science (IBS), Daejeon, Republic of Korea. Supported by the Institute for Basic Science (IBS-R029-Y1).
E-mail: {\tt
tuantran@ibs.re.kr}.}
}

\date{}

\maketitle

\begin{abstract}
\emph{Majority dynamics} on a graph $G$ is a deterministic process such that every vertex updates its $\pm 1$-assignment according to the majority assignment on its neighbor simultaneously at each step.
Benjamini, Chan, O'Donnell, Tamuz and Tan conjectured that, in the Erd\H{o}s--R\'enyi random graph $G(n,p)$, the random initial $\pm 1$-assignment converges to a $99\%$-agreement with high probability whenever $p=\omega(1/n)$.

This conjecture was first confirmed for $p\geq\lambda n^{-1/2}$ for a large constant $\lambda$ by Fountoulakis, Kang and Makai. Although this result has been reproved recently by Tran and Vu and by Berkowitz and Devlin, it was unknown whether the conjecture holds for $p< \lambda n^{-1/2}$.
We break this $\Omega(n^{-1/2})$-barrier by proving the conjecture for sparser random graphs $G(n,p)$, where $\lambda' n^{-3/5}\log n \leq p \leq \lambda n^{-1/2}$ with a large constant $\lambda'>0$.
\end{abstract}

\section{Introduction}\label{sec:intro}
\emph{Majority dynamics} on a graph $G$ is a fundamental example of opinion exchange dynamics that models human interactions in a society. 
Formally, every vertex $v\in V(G)$ has its \emph{opinion} $s_t(v)$ on Day $t$, where each $s_t(v)$ updates simultaneously by the majority opinion on the neighbors at each day.
That is, for $t\geq 1$,
\begin{align*}
    s_t(v)=\begin{cases}
    \sgn \sum_{u\sim v}s_{t-1}(u) ~~\text{ if }\sum_{u\sim v}s_{t-1}(u)\neq 0,\\
    s_{t-1}(v) ~~\text{ otherwise }
    \end{cases}
\end{align*}
and the \emph{initial opinions} $s_0(v)$ are given.
This model has been studied in various areas, including combinatorics~\cite{BCOTT16,BD20,FKM20,GO80,TV19}, psychology~\cite{CH56psy} and biophysics~\cite{McP90}, since 1940s.
For more discussions on relevant models, we refer the reader to the survey~\cite{MT17}.

In the study of majority dynamics, perhaps one of the most natural questions is what happens after sufficiently many days. 
For every finite graphs $G$, Goles and Olivos~\cite{GO80} showed that each $s_t(v)$ always converges to a periodic behavior of length at most two, no matter what the initial opinion~$s_0$ is.
In other words, the dynamics eventually either alternates between two distinct states or converges to a single state.
Particularly interesting examples of the single state may be an \emph{$(1-\varepsilon)$-proportion agreement} or \emph{unanimity},
i.e., 
$|\sum s_t(v)|\geq (1-2\varepsilon)n$ or $|\sum s_t(v)|=n$, respectively.

The next natural question is then under what circumstances majority dynamics converges to a single state. In particular, when does unanimity (or 99\% agreement) appear?
In the view of probabilistic combinatorics, the most popular host graph $G$ may be the Erd\H{o}s--R\'enyi random graph $G(n,p)$, where each edge on the vertex set $[n]:=\{1,2,\cdots,n\}$ exists with probability~$p$ independently at random.
The first in-depth study in this direction was done by Benjamini, Chan, O'Donnell, Tamuz and Tan~\cite{BCOTT16}, where they proposed the following intriguing conjecture.

\begin{conjecture}[{\cite[Conjecture 1.5]{BCOTT16}}]\label{conj:main}
Let $s_0(v)$ be sampled uniformly at random for each $v\in [n]$ and let $\varepsilon\in(0,1]$ be given.
Then with probability $1-\varepsilon$, the vertices in $G(n,p)$ have an $(1-\varepsilon)$-proportion agreement $|\sum_v s_0(v)|\geq (1-2\varepsilon)n$ after sufficiently many days whenever $p=\omega(1/n)$.
\end{conjecture}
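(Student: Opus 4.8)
The plan is to prove \cref{conj:main} conditionally on the sign of the initial imbalance $X_0:=\sum_v s_0(v)$, which by the symmetry $s\mapsto -s$ of the model costs nothing. Since $|X_0|=\Theta(\sqrt n)$ with probability bounded away from $0$ and $\Pr[X_0=0]=O(n^{-1/2})$, it suffices to show: conditioned on the event that $X_0$ is positive and lies in $[\delta\sqrt n,\delta^{-1}\sqrt n]$ (which, for $\delta=\delta(\eps)$ small enough, fails with probability at most $\eps/2$), with probability at least $1-\eps/2$ the dynamics reaches a day $T$ with $\sum_v s_T(v)\ge(1-2\eps)n$ and moreover the $(1-2\eps)$-agreement persists on all later days; by the eventual period-at-most-two behaviour of majority dynamics \cite{GO80}, this yields the claimed agreement after sufficiently many days. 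The quantity driving the induction is the \emph{opinion surplus} $X_t=\sum_v s_t(v)$, but $X_t$ alone does not close the recursion: to pass from day $t$ to day $t+1$ one must simultaneously track how concentrated and how decorrelated the configuration $s_t$ is---statistics in the spirit of $\DISC$, $\VAR$ and $\PAIR$---and show that this control is self-sustaining.

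\emph{The amplification map.} Because $p=\omega(1/n)$ forces $d:=np\to\infty$ while $G(n,p)$ is locally tree-like, the one-step update is best analysed through the local weak limit, a Galton--Watson tree whose offspring law is asymptotically Poisson with mean $d$. Writing $q_t\approx\tfrac12+\tfrac{X_t}{2n}$ for the typical probability that $s_t(v)=+1$, the idealized recursion is $q_{t+1}=\Pr[\Bin(d,q_t)>d/2]+q_t\,\Pr[\Bin(d,q_t)=d/2]$, which has $\tfrac12$ as an unstable fixed point and $0,1$ as stable fixed points: for $|q_t-\tfrac12|=o(1)$ it behaves like $2q_{t+1}-1\sim\sqrt{2d/\pi}\,(2q_t-1)$, and once $q_t-\tfrac12$ is bounded below it reaches $q_t\ge 1-\eps$ within a bounded number of further rounds. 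Thus each round multiplies the surplus by $\Theta(\sqrt d)$ (the heuristic already behind the Day~$1$ estimate of \cite{BCOTT16}), and one needs $k=\Theta(\log n/\log d)$ rounds to drive $|X_t|$ from $\Theta(\sqrt n)$ to $\Theta(n)$---a number which, for the sparsest $p$ permitted by the conjecture, tends to infinity with $n$.

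\emph{Concentration and decorrelation---the crux.} One must show that the true process follows the idealized recursion: conditioned on $G$ and on $s_t$, that $X_{t+1}=(1+o(1))\,\EE[X_{t+1}\mid s_t,G]$ whenever $s_t$ is sufficiently spread out, and---crucially---that the spread-outness is propagated, i.e.\ that $\PAIR(s_{t+1})$ is controlled in terms of $\PAIR(s_t)$, the codegree profile of $G$, and the residual dependence of $s_t$ on $X_0$. Correlations enter both through overlapping neighbourhoods and through the common conditioning on $X_0$, and they \emph{compound} over the $k\to\infty$ rounds; hence the per-round multiplicative error must be not merely $o(1)$ but summable, for instance geometrically decaying in $t$. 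Obtaining such a uniform bound, together with the graph-side fluctuations (handled by an edge-exposure martingale and Azuma/Talagrand-type inequalities, as in \cite{FKM20,TV19,BD20} for $p\ge\lambda n^{-1/2}$), is where genuinely new input beyond the window $\lambda'n^{-3/5}\log n\le p\le\lambda n^{-1/2}$ treated in this paper is required; this is the principal obstacle to the full conjecture.

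\emph{Endgame and persistence.} Once $|X_T|\ge\delta n$ for a fixed $\delta>0$, a Chernoff bound on each vertex's neighbourhood shows that day $T+1$ already yields $|X_{T+1}|\ge(1-\eps)n$, since all but an $e^{-\Theta(\delta^2 d)}=o(1)$ fraction of vertices see a strict majority on the larger side. For persistence, one uses that the update is a monotone operator on $\{\pm1\}^V$ and that, in a $(1-\eps)$-agreeing configuration, all but an $o(n)$ set of low-degree or poorly embedded vertices have a $(1-o(1))$-fraction of their neighbours in the majority; such vertices never flip away from the majority, so the agreeing proportion stays $\ge 1-2\eps$ thereafter. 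Finally, the failure probability is the union of the atypical-$X_0$ event, the $O(k)=O(\log n)$ per-round concentration failures, and the bad-graph events---each $o(1)$, hence below $\eps$ once the constants are fixed.
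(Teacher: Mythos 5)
The statement you were given is \cref{conj:main}, which is an \emph{open conjecture}: the paper does not prove it and says explicitly that the full conjecture ``still remains open.'' What the paper establishes (\cref{thm:main}) is only the case $\lambda' n^{-3/5}\log n\le p\le \lambda n^{-1/2}$, and by a route quite different from yours: it analyses just the first two days, coupling the uniform initial opinion with a balanced ``morning opinion'' plus $c\sqrt n$ swing vertices, proves by a second-moment argument that slightly more than half the vertices are $\gamma$-almost-positive (\cref{lem:day0_main}), deduces $\sum_v \tilde s_2(v)\ge\alpha pn^{3/2}$ (\cref{lem:day2_bias}), and then finishes deterministically via jumbledness (\cref{lem:Zehmakan}). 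Your proposal is instead the multi-round amplification heuristic of~\cite{BCOTT16} (the one restated in the remark after \cref{lem:day2_bias}), and it has a genuine gap that you yourself name: the claim that the true process tracks the idealized recursion $2q_{t+1}-1\sim\sqrt{2d/\pi}\,(2q_t-1)$ over $k=\Theta(\log n/\log d)\to\infty$ rounds, with per-round errors that are summable, is asserted but not proved. That claim \emph{is} the content of the conjecture; conceding that ``genuinely new input is required'' at exactly that step means the argument does not close.

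To be concrete about why the step fails as written: (a) for $t\ge1$ the opinions $s_t(v)$ are functions of the edges of $G$, so you cannot re-expose edges to pass from $s_t$ to $s_{t+1}$ as if neighbourhood opinions were fresh i.i.d.\ samples --- the correlations through the already-revealed graph are precisely what makes even Day~2 hard (the present paper spends all of \cref{sec:day0} on it, and only succeeds for $p\ge\lambda'n^{-3/5}\log n$); (b) the local-weak-limit/Galton--Watson picture controls only a bounded number of rounds, whereas for $np\to\infty$ arbitrarily slowly you need $k\to\infty$ rounds, and depth-$k$ neighbourhoods cease to be tree-like once $(np)^k\sim n$, i.e.\ exactly when the surplus is supposed to become macroscopic; (c) the statistics ($\PAIR$, $\DISC$, $\VAR$) whose self-sustaining control is supposed to drive the induction are never defined, let alone shown to propagate. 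The endgame (from $|X_T|\ge\delta n$ to near-unanimity and persistence) is sound and matches \cref{lem:Zehmakan}, but the middle of the argument --- the part that would prove the conjecture --- is missing.
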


In~\cite{BCOTT16}, the authors also conjectured that the converse of the statement above is true. That is, given any fixed $C>0$, $G(n,C/n)$ eventually oscillates between two states with probability $1-o(1)$. 
They actually gave positive evidences for both conjectures. First, it is proved in~\cite[Theorem~3]{BCOTT16} that a random 4-regular graph never converges to the single state with probability $1-o(1)$.
Second, as a partial progress towards~\cref{conj:main}, \cite[Theorem~2]{BCOTT16} shows that, under the stronger assumption $p\geq \lambda n^{-1/2}$ for a large constant $\lambda$, the probability that unanimity appears is at least $0.4$. Indeed, the probability bound here is weaker than the conjectured value $1-\varepsilon$, whereas unanimity is a slightly stronger condition than the $(1-\varepsilon)$-proportion agreement.
This was strengthened by Fountoulakis, Kang and Makai~\cite[Theorem~1.1]{FKM20}, who pushed the probability bound $0.4$ in \cite[Theorem~2]{BCOTT16} to $1-\varepsilon$ and confirmed \cref{conj:main} under the condition $p\geq\lambda n^{-1/2}$ instead of $p=\omega(1/n)$, where $\lambda$ depends on $\varepsilon$.

\medskip

There are other models with various alternative settings for the initial opinion $s_0$ or the host graphs~$G$, e.g., on pseudorandom graphs~\cite{Z18}, with linear bias on $s_0$~\cite{GZ18}, or on grids (or tori) $G$~\cite{GZ17}.
One of the most notable variants may be the one suggested by Tran and Vu~\cite{TV19}, where $s_0$ is randomly chosen while the discrepancy between the number of vertices with distinct $s_0$-values is fixed, i.e., there are $\lceil n/2\rceil +C$ vertices $v$ with $s_0(v)=+1$ for a fixed number $C\geq 0$. Note that, in contrast, the independent random initial assignment gives $\Omega(\sqrt{n})$ bias in either direction with probability $1-\varepsilon$, as will be proved in~\cref{lem:deviation}.
Tran and Vu proved that $C=6$ is enough to force unanimity on the random graph $G(n,1/2)$ with probability strictly larger than $.51$ and, in the same paper, reproved the Fountoulakis--Kang--Makai theorem.

Very recently, Berkowitz and Devlin~\cite{BD20} studied the Tran--Vu model further. They again reproved the Fountoulakis--Kang--Makai theorem by using their ``Central Limit Theorem" and also lowered the constant discrepancy bound $C=6$ by Tran and Vu to $C=2$.

\medskip

Despite these two alternative proofs of the Fountoulakis--Kang--Makai theorem and deeper studies on somewhat ``sharper" models, nobody ever managed to settle~\cref{conj:main} beyond the barrier $p\geq \lambda n^{-1/2}$. To quote very recent work~\cite{CDF20} in the area, ``the study of
majority dynamics for $p=o(n^{-1/2})$ imposes immense complications."
Our main result is to confirm~\cref{conj:main} for sparser random graphs $G(n,p)$ with $\lambda' n^{-3/5}\log n\leq p\leq \lambda n^{-1/2}$, thereby breaking the barrier for the first time.
\begin{theorem}\label{thm:main}
Let $s_0(v)$ be sampled uniformly at random for each $v\in [n]$ and let $\varepsilon\in(0,1]$ and $\lambda>0$ be given.
Then there exist $n_0$ and $\lambda'$ such that, with probability at least $1-\varepsilon$, the vertices in $G(n,p)$ reach the unanimous state $\sgn \sum_v s_0(v)$  after %sufficiently many days 
six days whenever $\lambda' n^{-3/5}\log n\leq p\leq \lambda n^{-1/2}$ and $n\geq n_0$.
\end{theorem}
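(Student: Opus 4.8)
The plan is to track the bias $Z_t:=\sum_{v}s_t(v)$ day by day, show that it grows by a factor of order $\sqrt{np}$ while it stays small, and then show that once it is large enough a single further step forces unanimity. First I would use the symmetry $s_0\mapsto-s_0$ to condition on $Z_0>0$ and invoke \cref{lem:deviation} to also assume $c_1\sqrt n\le Z_0\le c_2\sqrt n$ for absolute constants $c_1,c_2>0$; these conditionings cost at most $\varepsilon/2$ in probability. Write $m:=np$, so in our range $m$ grows polynomially in $n$ and $m\gg\log n$, and from now on $s_0$ is fixed and all probabilities are over the edges of $G(n,p)$.

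\emph{Day 1.} For a fixed $v$ the signed neighbour sum $\sum_{u\sim v}s_0(u)$ has the law of $\Bin(n_+,p)-\Bin(n_-,p)$ with $n_+-n_-=Z_0+O(1)$, which I would decompose as $\Bin(Z_0,p)+D$ where $D$ is a symmetric difference of two i.i.d.\ binomials. A local limit estimate for $D$ near the origin (its point mass there is $(1+o(1))/\sqrt{2\pi m}$, uniformly over the $o(\sqrt m)$-window that matters) yields $\Pr[s_1(v)=+1]=\tfrac12+(1+o(1))\,Z_0p/\sqrt{2\pi m}$ and hence $\EE Z_1=(1+o(1))\sqrt{2/\pi}\,Z_0\sqrt m$. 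A crude Berry--Esseen bound is useless here because the per-vertex signal $Z_0p/\sqrt m=\Theta(\sqrt p)$ lies below the Berry--Esseen error $\Theta(1/\sqrt m)$ whenever $p\ll n^{-1/2}$; extracting the signal anyway is exactly the point of the binomial decomposition, and is the first thing one must do to cross the $n^{-1/2}$ barrier. For concentration, $s_1(v)$ and $s_1(w)$ become independent once the single edge $\{v,w\}$ is revealed, so $\mathrm{Cov}(s_1(v),s_1(w))=O(1/n)$, $\VAR(Z_1)=O(n)=o((\EE Z_1)^2)$, and Chebyshev gives $Z_1=\Theta(Z_0\sqrt m)$ with probability $1-o(1)$.

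\emph{Later days.} I would show that whenever $Z_t\le\delta\sqrt{n/p}$ for a small constant $\delta$ one again has $Z_{t+1}=\Theta(\sqrt m\,Z_t)$ with probability $1-o(1)$. Fixing $w$ and revealing $N(w)$, the opinions $\{s_t(v):v\in N(w)\}$ are measurable functions of the radius-$t$ balls around the $v$'s; for the bounded $t$ used here these balls are $o(n)$-sized and, with probability $1-o(1)$, pairwise almost disjoint and locally tree-like, so conditionally on $(s_0,N(w))$ the family behaves like an independent one and the Day-1 computation can be rerun, now with mean shift $\approx pZ_t$ and variance $\approx m$. A second-order expansion is needed: the common neighbour $w$ biases each $s_t(v)$ towards $s_0(w)$ by $\Theta(\sqrt m)$, which exceeds the signal, but this contribution is odd in $s_0(w)$ and cancels in $\sum_w$, leaving $\Theta(pZ_t)$ per vertex, i.e.\ $Z_{t+1}=\Theta(\sqrt m\,Z_t)$; concentration again follows from a second-moment estimate, using that $\mathrm{Cov}(s_{t+1}(w),s_{t+1}(x))$ decays fast enough in $\mathrm{dist}(w,x)$. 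Iterating from $Z_1=\Theta(\sqrt{nm})$, the hypothesis $m\ge\lambda'n^{2/5}\log n$ pushes the bias past $\sqrt{n/p}$ after a bounded number $t^\star$ of steps; it is in balancing $t^\star$ against the six-day budget, while keeping all error terms $o(1)$, that the exponent $3/5$ and the factor $\log n$ enter. After this breakout I would run one more day, on which the per-vertex signal-to-noise ratio is at least a large constant (one can arrange $Z_{t^\star}\ge C_0\sqrt{n/p}$ by spending one extra step), so the day-$(t^\star+1)$ minority set $S$ satisfies $|S|\le n/100$ with high probability; a final ``mop-up'' day then finishes, since for each $w$ the number of neighbours in $S$ is stochastically dominated by $\Bin(m,1/100)$, which exceeds $m/2$ with probability $\le e^{-\Omega(m)}=o(1/n)$, so a union bound gives $Z_{t^\star+2}=n$. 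Bookkeeping the constants shows that six days suffice.

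\emph{Main obstacle.} The crux is the ``later days'' step: controlling the joint law of $\{s_t(v):v\in N(w)\}$ despite its dependence on $G$. Two difficulties must be handled — the common-neighbour bias, which is only lower-order in $Z_{t+1}$ but the dominant term for a single $s_{t+1}(w)$, so the cancellation has to be established through a delicate second-order expansion; and decorrelating the neighbours' opinions from one another and from the edges at $w$, which is done via the local sparsity of $G(n,p)$. It is precisely the failure of this locality that blocks pushing the method much below $p=n^{-3/5}\log n$ within a bounded number of rounds.
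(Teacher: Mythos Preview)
Your outline captures the correct heuristic (bias grows by a factor $\sqrt{np}$ per round) and your Day-1 computation is fine, but there is a genuine gap at Day~2. You assert that ``concentration again follows from a second-moment estimate, using that $\mathrm{Cov}(s_{t+1}(w),s_{t+1}(x))$ decays fast enough in $\mathrm{dist}(w,x)$''; this is neither proved nor true as stated --- in this range of $p$ the diameter of $G(n,p)$ is $O(1)$, so distance-decay gives nothing. What is actually needed is an average pairwise covariance of order $O\bigl(\tfrac{\log^2 n}{\sqrt{np}}\bigr)$, and comparing that against the squared signal $\Theta(p^2n)$ per pair is precisely where the threshold $p\gg n^{-3/5+o(1)}$ comes from; it is \emph{not}, as you suggest, a matter of fitting the iteration into six days (two rounds of growth already push $Z_2=\Theta(pn^{3/2})$ past $\sqrt{n/p}$ for all $p\gg n^{-2/3}$, so the day budget does not bite at $3/5$). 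Your tree-likeness picture is also too optimistic: at $p=\lambda n^{-1/2}$ the second neighbourhood of a vertex already has $\Theta(n)$ vertices, and the common-neighbour bias of order $\sqrt{np}$ that you correctly flag is itself correlated across $u$ and $w$ through the $\Theta(n^2p^3)$ edges between $N(u)$ and $N(w)$, so the ``odd in $s_0(w)$'' cancellation is far from automatic at the second-moment level. The paper sidesteps a direct attack on $Z_2$ by a different decomposition: it writes $s_0$ as a perfectly balanced ``morning'' assignment $r_0$ plus $c\sqrt n$ random swing flips, and runs the second-moment argument not on $Z_2$ but on the number of $\gamma$-almost-positive vertices (those $u$ with $\sum_{w\in N(u)}r_1(w)>-\gamma p^{3/2}n$). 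The exact symmetry of $r_0$ is what makes the key cancellation (\cref{claim:mu+-}) tractable, and the swing vertices then deterministically convert each almost-positive vertex to $\tilde s_2=+1$ (\cref{lem:day2_bias}).

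A smaller but real error: your final mop-up step claims that for each $w$ the number of neighbours in the minority set $S$ is stochastically dominated by $\Bin(m,1/100)$. This fails because $S$ is a function of $G$ and hence not independent of the edges at $w$. The paper instead conditions on the w.h.p.\ event that $G(n,p)$ is $(p,O(\sqrt{np}))$-jumbled and applies the deterministic expansion lemma (\cref{lem:Zehmakan}) --- one application of part~(i) and three of part~(ii) --- to pass from the linear bias on Day~3 to unanimity on Day~6.
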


Together with the Fountoulakis--Kang--Makai theorem, this extends the range where~\cref{conj:main} is settled further to $p\geq \lambda' n^{-3/5}\log n$.
%Although this is already interesting in its own right, 
However, the full~\cref{conj:main} still remains open and there seem to be a substantial amount of technical obstacles to overcome, which will be discussed in due course.

\medskip

Somewhat analogously to~\cite{BD20,TV19}, our strategy is to analyze the dynamics under the ``sharpest" initial setting carefully. Let $r_0$ be a $\pm 1$-assignment on $[n]$ obtained by choosing $\lceil\frac{n}{2}\rceil$ vertices $v$ uniformly at random to assign $+1$ while giving $-1$ to the remaining $\lfloor\frac{n}{2}\rfloor$ vertices and let $r_t$, $t>0$, be the Day $t$ opinion resulting from majority dynamics with the initial opinion $r_0$.
Then the uniform random choice of~$s_0$ resembles an alteration of $r_0$ obtained by turning $\Omega(\sqrt{n})$ $-1$'s to $+1$'s, which produces more $+1$'s on Day~1 and hence proves a nontrivial shift on $\sum_v s_2(v)$.
To chase this effect of the alteration, we call a vertex $v$ \emph{$\gamma$-almost-positive} if $\sum_{w \in N(v)} r_1(w) > - \gamma p^{3/2}n$, which are ``potentially positive" vertices in a rough sense.
Arguably the following is our key lemma, which shows that slightly more than a half of the vertices are $\gamma$-almost-positive with probability $1-o(1)$.
\begin{lemma}\label{lem:day0_main}
For $\lambda>0$, there exists $\lambda'>0$ such that the following holds: 
For every $\gamma>0$, there is $\alpha>0$ such that
 the number of $\gamma$-almost-positive vertices in $G=G(n,p)$ with $\lambda'n^{-3/5}\log n \leq p\leq \lambda n^{-1/2}$ is at least $\frac{n}{2} + \alpha pn^{3/2}$ with probability $1-o(1)$.
\end{lemma}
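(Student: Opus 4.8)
The plan is to fix the host graph and then run a second‑moment argument over the random balanced assignment. Write $\sigma_w:=\sum_{u\sim w}r_0(u)$, so that $r_1(w)=\sgn\sigma_w$ up to the (rare) ties, set $X_v:=\sum_{w\in N(v)}r_1(w)$, and let $Y:=\abs{\set{v\in[n]: v\text{ is }\gamma\text{-almost-positive}}}=\sum_{v}\mathbb 1\sqb{X_v>-\gamma p^{3/2}n}$. A.a.s.\ $G(n,p)$ is \emph{good}: it has near-regular degrees $\deg v=(1+o(1))pn$, codegrees $(1+o(1))p^2n$, second‑neighbourhood degree sums $\sum_{w\sim u}\deg w$ concentrated around their mean, and each $w\in N(v)$ has a $\lambda$‑dependent positive fraction of its neighbours private to it (adjacent to no other member of $N(v)$). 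It suffices to prove the conclusion for an arbitrary good $G$, with only $r_0$ random. For such $G$ I will show (i) $\EE[Y]\ge \tfrac n2+c\,pn^{3/2}$ for some $c=c(\lambda,\gamma)>0$, and (ii) $\VAR(Y)=o(p^2n^3)$; then Chebyshev's inequality forces $Y\ge \tfrac n2+\tfrac c2\,pn^{3/2}$ with probability $1-o(1)$, which is the assertion with $\alpha=\tfrac c2$, provided $\lambda'$ is a sufficiently large constant.

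\textbf{The first moment.} The map $r_0\mapsto -r_0$ is measure preserving and sends $r_1\mapsto -r_1$, hence $X_v\mapsto -X_v$; thus every $X_v$ is symmetric about $0$, and a one‑line computation with this symmetry gives $\Pr[X_v>-\gamma p^{3/2}n]\ge \tfrac12+\Pr[0<X_v\le \gamma p^{3/2}n]$. So (i) reduces to a one‑sided local limit estimate: for all but $o(n)$ of the vertices $v$, $\Pr[0<X_v\le \gamma p^{3/2}n]=\Omega(p\sqrt n)$. To get this for a typical $v$, I would pick $W\subseteq N(v)$ with $\abs W=\Omega_\lambda(pn)$ together with pairwise disjoint \emph{private} sets $P_w\subseteq N(w)$ $(w\in W)$, $\abs{P_w}=\Omega_\lambda(pn)$, each meeting no $N(w')$ with $w'\in N(v)\setminus\set w$ (possible since a good graph has enough private neighbours). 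Conditioning on $r_0$ off $\bigcup_{w\in W}P_w$ then writes $X_v=(\text{const})+\sum_{w\in W}\sgn(a_w+\rho_w)$, with $a_w$ fixed by the conditioning and $\rho_w:=\sum_{u\in P_w}r_0(u)$ living on disjoint blocks; for a constant fraction of the conditionings the conditional mean of $X_v$ is $O(\sqrt{pn})$ and $\Omega_\lambda(pn)$ of the $a_w$ satisfy $\abs{a_w}=O(\sqrt{pn})$, so those signs are genuine (biased) coin flips. The classical local central limit theorem for sums of (nearly) independent lattice variables then yields $\Pr[X_v=j]=\Theta((pn)^{-1/2})$ for every $j$ of the correct parity within $O(\sqrt{pn})$ of the conditional mean; summing over $j\in(0,\gamma p^{3/2}n]$ (capping the window at $\sqrt{pn}$ if $\gamma pn^{1/2}>1$) and averaging over the conditioning produces the required $\Omega(p\sqrt n)$. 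The one genuine nuisance is that $r_0$ on $\bigcup_w P_w$ is uniform subject to a fixed number of $+1$'s rather than i.i.d.; this mild dependence is handled by the standard coupling/conditioning device for balanced models.

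\textbf{The second moment.} Write $\VAR(Y)=\sum_v\VAR(\mathbb 1_v)+\sum_{v\ne v'}\op{Cov}(\mathbb 1_v,\mathbb 1_{v'})$ with $\mathbb 1_v=\mathbb 1[X_v>-\gamma p^{3/2}n]$; the diagonal is $O(n)=o(p^2n^3)$ since $pn\to\infty$. For the off‑diagonal, $\mathbb 1_v$ is a threshold function of $X_v$, whose variance is concentrated around a common value $\asymp pn$ for typical $v$, so a bivariate Berry–Esseen estimate for $(X_v,X_{v'})$ gives $\op{Cov}(\mathbb 1_v,\mathbb 1_{v'})=\Theta\!\paren{\op{Cov}(X_v,X_{v'})/(pn)}+\text{(error)}$, with the errors summing to $o(p^2n^3)$ — this is where $p\ge\lambda' n^{-3/5}\log n$ with $\lambda'$ large is used, since the per‑pair rate $(pn)^{-1/2}$ must beat $p^2n$ after summing over the $\Theta(n^2)$ pairs. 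It remains to bound $\sum_{v\ne v'}\op{Cov}(X_v,X_{v'})=\VAR(\sum_v X_v)-\sum_v\VAR(X_v)$. Since $\sum_v X_v=\sum_w\deg(w)r_1(w)$, one bounds $\VAR(\sum_w\deg(w)r_1(w))$ by a diagonal term $O(p^2n^3)$ plus $O(1/(pn))\cdot\VAR(\sum_w\deg(w)\sigma_w)$; and because $\sum_w\deg(w)\sigma_w=\sum_u D(u)r_0(u)$ with $D(u):=\sum_{w\sim u}\deg w$, the balanced‑model identity $\VAR(\sum_u c_ur_0(u))=(1+o(1))\paren{\sum_u c_u^2-\tfrac1n(\sum_u c_u)^2}$ together with the concentration of $D(u)$ yields $\VAR(\sum_w\deg(w)\sigma_w)=\Theta(p^3n^4)$ — crucially not the naive $\Theta(p^4n^5)$. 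Hence $\VAR(\sum_v X_v)=\Theta(p^2n^3)$, so $\sum_{v\ne v'}\op{Cov}(X_v,X_{v'})=\Theta(p^2n^3)$ and therefore $\sum_{v\ne v'}\op{Cov}(\mathbb 1_v,\mathbb 1_{v'})=O(pn^2)=o(p^2n^3)$, giving (ii).

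\textbf{Main obstacle.} The crux is the one‑sided local limit estimate in the first‑moment step: one must push an \emph{anticoncentration lower bound} through the dependencies among the signs $\set{\sgn\sigma_w:w\in N(v)}$, which is exactly why the private‑neighbourhood decomposition (extracting genuine independence on a linear‑sized subcollection) and the careful treatment of the fixed‑sum constraint are needed. A secondary technical point is the bivariate Berry–Esseen input to (ii): it requires joint approximate normality of $(X_v,X_{v'})$ at a rate, uniform over all pairs, strong enough to survive summation, which is what forces the density hypothesis $p\ge\lambda' n^{-3/5}\log n$ with $\lambda'$ large rather than merely $p=\omega(1/n)$.
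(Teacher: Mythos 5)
Your first-moment step is sound in outline and is in fact a clean way to see where the per-vertex gain of $\Omega(p\sqrt n)$ comes from: the symmetry $r_0\mapsto -r_0$ plus a local-limit (or Berry--Esseen) estimate on the window $(0,\gamma p^{3/2}n]$ plays the same role as the paper's cancellation $\mu_++\mu_-=O(\log^2 n)$ combined with $\Psi(x_+)+\Psi(x_-)\ge 1-C(x_++x_-)$. The genuine gap is in your second moment. Having fixed the graph and retained only the randomness of $r_0$, the pair $(X_v,X_{v'})$ is \emph{not} a sum of independent random vectors: each $r_1(w)=\sgn(\sigma_w)$ is a nonlinear function of many shared $r_0$-coordinates, and distinct $w$'s interact both through common second neighbours and through the balanced-sum constraint. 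So there is no off-the-shelf ``bivariate Berry--Esseen for $(X_v,X_{v'})$'' to invoke, and the two transfer steps you rely on --- $\operatorname{Cov}(\mathbf{1}_v,\mathbf{1}_{v'})\approx \operatorname{Cov}(X_v,X_{v'})/(pn)$ and $\operatorname{Cov}(r_1(w),r_1(w'))\approx \operatorname{Cov}(\sigma_w,\sigma_{w'})/(pn)$ --- are Gaussian-correlation heuristics whose per-pair additive errors you never control. The budget is unforgiving: after weighting by $\deg(w)\deg(w')$ and summing over $\Theta(n^2)$ pairs, the second transfer tolerates a per-pair error of only $O(1/n)$, far below what any CLT-type rate ($(pn)^{-1/2}$ or even $(pn)^{-1}$) provides. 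Your private-neighbourhood decomposition extracts independence for a \emph{single} $X_v$, but for a pair the cross-dependencies are exactly what generate the covariance, so they cannot be discarded. Making this route rigorous with the graph frozen is essentially the content of the Berkowitz--Devlin central limit theorem machinery, not a routine estimate.

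The paper sidesteps this entirely by refusing to fix the graph. It conditions only on the local data $G_{uv}=G[\{u,v\}\cup N(u)\cup N(v)]$ and on $r_0$ restricted to $N(u)\cup N(v)$ (plus the counts $n_1,n_2$ of $\pm1$'s outside). Conditionally, the edges from each $w\in N(u)\cup N(v)$ to the exterior are fresh independent coins, so $\sum_{x\in N(w)\setminus\Gamma}r_0(x)$ is distributed as a difference of independent binomials, \emph{independently across $w$}. This makes the summands of $X_u$ and $X_{v}$ genuinely mutually independent, so the univariate Berry--Esseen bound applies directly and, more importantly, $A_u$ and $A_v$ become conditionally independent up to the $O(\log n)$ common-neighbour correction, yielding $\PP[A_u\cap A_v]-\PP[A_u]\PP[A_v]=O(\log^2 n/\sqrt{np})$ per pair with no bivariate CLT needed. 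If you want to salvage your write-up, the fix is to move the edge-exposure into the argument rather than conditioning on a ``good'' $G$ at the outset; as it stands, the covariance bound is not proved.
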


%\medskip

This paper is organized as follows. In \cref{sec:prelim}, we give some basic definitions and tools, which may be skipped by experienced readers. The proof of \cref{thm:main} will be given throughout \cref{sec:day0,sec:day1+}. In particular, \cref{sec:day0} contains the main new ideas to analyze the first two days, including the proof of~\cref{lem:day0_main} at the end of the section. 
In \cref{sec:day1+}, the shift of~$s_2$ obtained by using~\cref{lem:day0_main} will show that unanimity must appear by Day 6.

\section{Preliminaries}\label{sec:prelim}

An event $A_n$ that depends on the parameter $n$ occurs \emph{with high probability} (or briefly, \emph{w.h.p.}) if $\PP[A_n]$ tends to $1$ as $n$ tends to infinity. The notation $x=a \pm b$ means the inequality $a-b\leq x\leq a+b$.
We use the standard asymptotic notation such as $O,o,\Omega,\omega$ and $\Theta$ to avoid carrying numerous constants when estimating nonnegative functions.
For instance, $y_n=x_n\pm O(n)$ means that $|y_n-x_n|=O(n)$.
In addition, $f(n)\gg g(n)$ for nonnegative $f$ and $g$ means $\lim_{n\rightarrow\infty} g(n)/f(n)=0$.
The parameter~$n$ that represents the number of vertices in $G(n,p)$ will be assumed to be large enough whenever necessary. Logarithms will always be understood to be base $e$. We denote by $\Bi(n,p)$ the binomial distribution with $n$ independent trials of one-probability $p$. 

\medskip

One of the most frequently used probabilistic tools in what follows is the Chernoff bound, due to Chernoff~\cite{Ch52} and to Okamoto~\cite{Ok59}.
We use the version stated by Janson~\cite[Theorem~1]{J02}.
\begin{lemma}[The Chernoff bound]\label{lem:chernoff}
Let $X =\sum_{i=1}^n X_i$, where $X_i$ are independent Bernoulli variable with $\PP[X_i=1]=p_i$. 
Let $\mu=\EE[X]=\sum_{i=1}^n p_i$. Then for $t\geq 0$,
\begin{enumerate}[(i)]
    \item $\PP[X\geq \mu+t]\leq e^{-\frac{t^2}{2\mu+2t/3}}$ and
    \item $\PP[X\leq \mu-t]\leq e^{-\frac{t^2}{2\mu}}$.
\end{enumerate}
\end{lemma}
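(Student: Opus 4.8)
The plan is to use the standard exponential-moment (Cram\'er--Chernoff) method, reducing both tail bounds to two elementary one-variable inequalities. For the upper tail, fix $\lambda>0$; since $y\mapsto e^{\lambda y}$ is increasing, Markov's inequality gives
\[
\PP[X\ge\mu+t]=\PP\!\left[e^{\lambda X}\ge e^{\lambda(\mu+t)}\right]\le e^{-\lambda(\mu+t)}\,\EE\!\left[e^{\lambda X}\right].
\]
By independence, $\EE[e^{\lambda X}]=\prod_{i=1}^n\EE[e^{\lambda X_i}]=\prod_{i=1}^n\bigl(1+p_i(e^\lambda-1)\bigr)$, and using $1+y\le e^{y}$ on each factor bounds this by $\exp\bigl(\sum_{i}p_i(e^\lambda-1)\bigr)=\exp\bigl(\mu(e^\lambda-1)\bigr)$. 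Hence $\PP[X\ge\mu+t]\le\exp\bigl(\mu(e^\lambda-1)-\lambda(\mu+t)\bigr)$ for every $\lambda>0$. Writing $x:=t/\mu\ge0$ (the case $\mu=0$ being trivial), the exponent is minimized at $\lambda=\log(1+x)\ge0$, which gives $\PP[X\ge\mu+t]\le\exp\!\bigl(-\mu\,\varphi(x)\bigr)$ with $\varphi(x):=(1+x)\log(1+x)-x$. So (i) follows once one proves the scalar inequality $\varphi(x)\ge x^2/(2+\tfrac23 x)$ for all $x\ge0$, since then $\mu\,\varphi(t/\mu)\ge t^2/(2\mu+2t/3)$.

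The lower tail is symmetric. For $\lambda>0$, by Markov's inequality applied to $e^{-\lambda X}$,
\[
\PP[X\le\mu-t]=\PP\!\left[e^{-\lambda X}\ge e^{-\lambda(\mu-t)}\right]\le e^{\lambda(\mu-t)}\prod_{i=1}^n\bigl(1+p_i(e^{-\lambda}-1)\bigr)\le\exp\!\bigl(\mu(e^{-\lambda}-1)+\lambda(\mu-t)\bigr),
\]
where the last step again uses $1+y\le e^{y}$ (note $1+p_i(e^{-\lambda}-1)\ge 1-p_i\ge 0$). If $t>\mu$ the left-hand side is $0$ since $X\ge0$, and if $t=\mu$ then $\PP[X\le\mu-t]=\PP[X=0]=\prod_i(1-p_i)\le e^{-\mu}\le e^{-\mu/2}$, so we may assume $0\le t<\mu$ and set $x:=t/\mu\in[0,1)$. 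Minimizing the exponent at $\lambda=-\log(1-x)\ge0$ yields $\PP[X\le\mu-t]\le\exp\!\bigl(-\mu\,\psi(x)\bigr)$ with $\psi(x):=(1-x)\log(1-x)+x$, and (ii) follows from $\psi(x)\ge x^2/2$ on $[0,1)$.

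The remaining content — and the only place I expect any ``difficulty'' to reside — is the two elementary inequalities. The bound $\psi(x)\ge x^2/2$ is immediate from the nonnegative power series $\psi(x)=\sum_{k\ge2}\frac{x^k}{k(k-1)}$, whose first term is already $x^2/2$. The bound $\varphi(x)\ge x^2/(2+\tfrac23 x)$ is the slightly more delicate one: truncating a series does not suffice, since the relevant series alternates and the inequality must hold for all $x\ge0$. Instead I would argue by calculus: setting $g(x):=(2+\tfrac23 x)\varphi(x)-x^2$ one checks $g(0)=g'(0)=0$ and $g''(x)=\tfrac43\bigl(\log(1+x)-\tfrac{x}{1+x}\bigr)\ge0$, whence $g\ge0$ on $[0,\infty)$. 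Finally, one records that the degenerate cases ($\mu=0$, or $t=0$) are trivial and that the minimizing $\lambda$ is nonnegative in both parts, consistent with the constraint $\lambda>0$ used in Markov's inequality.
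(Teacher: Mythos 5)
Your proof is correct. Note that the paper does not prove this lemma at all --- it is quoted as a known result (Chernoff, Okamoto) in the form stated by Janson --- so there is no in-paper argument to compare against; your derivation is the standard Cram\'er--Chernoff exponential-moment argument that underlies that cited statement. All the steps check out: the optimal choices $\lambda=\log(1+x)$ and $\lambda=-\log(1-x)$ are the correct minimizers and are nonnegative as you note; the reduction of (i) to $(1+x)\log(1+x)-x\ge x^2/(2+\tfrac23x)$ and of (ii) to $(1-x)\log(1-x)+x\ge x^2/2$ is exactly right; your computation $g''(x)=\tfrac43\bigl(\log(1+x)-\tfrac{x}{1+x}\bigr)\ge 0$ is correct; and you correctly dispose of the degenerate cases $\mu=0$, $t=\mu$ and $t>\mu$ (the last using $X\ge 0$). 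Nothing is missing.
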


An easy consequence of the Chernoff bound is the following concentration result.

\begin{lemma}\label{lem:whp}
Let $X\sim \Bi(n,p)$ and let $\lambda>0$. %with $p\geq 1/n$.
Then there exists a constant $C>0$ such that, for $n$ large enough,
\begin{enumerate}[(i)]
    \item  If $p\geq \lambda/n$, then $\PP\big[ X= np\pm C\sqrt{np}\log n\big] \geq 1-1/n^6$ and
    \item If $p\leq \lambda /n$, then $\PP[X\leq C\log n]\geq 1-1/n^3$.
\end{enumerate}
\end{lemma}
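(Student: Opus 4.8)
The plan is to derive both parts directly from the Chernoff bound (\cref{lem:chernoff}), fixing the constant $C$ in terms of $\lambda$ only at the very end. Throughout, write $\mu=\EE[X]=np$.

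For part (i), I would apply \cref{lem:chernoff} with $t=C\sqrt{np}\,\log n$ and union-bound over the two tails, so it is enough to bound each of $\PP[X\geq\mu+t]$ and $\PP[X\leq\mu-t]$ by, say, $n^{-7}$. The lower tail is immediate: $\PP[X\leq\mu-t]\leq e^{-t^2/(2\mu)}=e^{-(C^2/2)\log^2 n}\leq n^{-7}$ once $\log n\geq 14/C^2$. For the upper tail the only thing to watch is which term dominates the denominator $2\mu+2t/3$, because $p\geq\lambda/n$ only guarantees $\mu\geq\lambda$ and $\mu$ may stay bounded; so I would split into two cases. If $t\leq\mu$, then $2\mu+2t/3\leq 4\mu$ and hence $\tfrac{t^2}{2\mu+2t/3}\geq\tfrac{t^2}{4\mu}=\tfrac{C^2}{4}\log^2 n\geq 7\log n$ for $n$ large. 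If $t>\mu$, then $2\mu+2t/3<4t$, so $\tfrac{t^2}{2\mu+2t/3}\geq\tfrac{t}{4}=\tfrac{C}{4}\sqrt{np}\,\log n\geq\tfrac{C\sqrt\lambda}{4}\log n\geq 7\log n$ provided $C\geq 28/\sqrt\lambda$. Either way the upper tail is at most $n^{-7}$, so $\PP[X=np\pm C\sqrt{np}\,\log n]\geq 1-2n^{-7}\geq 1-n^{-6}$.

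For part (ii), now $p\leq\lambda/n$ forces $\mu\leq\lambda$. Since $C\log n\to\infty$, for $n$ large we have $C\log n\geq 2\lambda\geq 2\mu$, so I can apply the upper-tail bound with $t=C\log n-\mu\geq\tfrac12 C\log n$. For $n$ large one also has $2\mu\leq 2\lambda\leq 2t/3$, whence $2\mu+2t/3\leq 2t$ and $\tfrac{t^2}{2\mu+2t/3}\geq\tfrac{t}{2}\geq\tfrac{C}{4}\log n\geq 3\log n$ for $C\geq 12$. Therefore $\PP[X>C\log n]\leq\PP[X\geq\mu+t]\leq n^{-3}$, i.e.\ $\PP[X\leq C\log n]\geq 1-n^{-3}$. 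Taking $C=\max\{12,\,28/\sqrt\lambda\}$ and $n$ large enough to justify the handful of $\log n$-inequalities used above settles both parts at once.

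There is no genuine obstacle in this argument; the only point needing a little care is keeping the case analysis in the exponent of the Chernoff bound honest, so that a single constant $C$ works uniformly throughout the range $p\geq\lambda/n$ even when $np$ does not grow.
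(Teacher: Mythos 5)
Your proof is correct and takes essentially the same route as the paper: both parts are direct applications of \cref{lem:chernoff} with $t=C\sqrt{np}\log n$ (resp.\ $t=C\log n-\mu$), the only cosmetic difference being that you handle the denominator $2\mu+2t/3$ by an explicit case split on whether $t\le\mu$, where the paper instead divides through by $\sqrt{np}$ and uses monotonicity in $\sqrt{np}\ge\sqrt{\lambda}$.
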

\begin{proof}
(i) Let $t=C\sqrt{np}\log n$ in~\cref{lem:chernoff}. Then for large enough constant $C$,
\begin{align*}
    \PP[|X-np|\geq t]&=\exp\left(-\frac{C^2np\log^2 n}{2np+\frac{2}{3}C\sqrt{np}\log n}\right)\leq \exp\left(-\frac{C^2\sqrt{\lambda}\log^2 n}{2\sqrt{\lambda}+\frac{2}{3}C\log n}\right)\leq 1/n^6.
\end{align*}
(ii) Again, for large $C$,
\begin{align*}
    \PP[X\geq C\log n]\leq \PP[X\geq np+(C-\lambda)\log n]\leq \exp\left(-\frac{(C-\lambda)^2\log^2 n}{2\lambda+\frac{2}{3}(C-\lambda)\log n}\right)\leq 1/n^3.\tag*{\qedhere}
\end{align*}
\end{proof}

\medskip

We use the following version of the Berry--Esseen inequality to estimate possibly non-binomial distributions.
\begin{theorem}[see, e.g., \cite{BE56}] \label{berry}
There is a universal constant $C_0$ such that the following holds: 
Let $X_1, X_2, \ldots, X_n$ be independent random variables with zero mean, variances $\EE(X_i^2)=\sigma_i^2 > 0$, and the absolute third moments $\mathbb{E}(|X_i|^3) = \rho_i < \infty$. 
Then 
$$\sup_{x \in \mathbb{R}} \left|\PP\left[\frac{\sum_{i=1}^n X_i}{\sigma_X} \le x\right] - \Phi(x)\right| \le C_0 \sigma_X^{-3} \sum_{i=1}^n \rho_i,$$
where $\sigma_X^2$ is the variance of $X = \sum_{i=1}^n X_i$, i.e., $\sigma_X^2 = \sum_{i=1}^n \sigma_i^2$, and $\Phi(x)$ is the cumulative distribution function of the standard normal variable.
In particular, if $|X_i|\leq M$ for an absolute constant $M>0$ almost surely, then the RHS is $O(\sigma_X^{-1})$.
\end{theorem}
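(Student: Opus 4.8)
The plan is to follow the classical characteristic-function (Fourier) route. First I would normalise: put $Y_i:=X_i/\sigma_X$, so that $S:=\sum_{i=1}^n Y_i$ has mean $0$ and variance $1$, and write $\tau_i^2:=\EE Y_i^2=\sigma_i^2/\sigma_X^2$ (hence $\sum_i\tau_i^2=1$) and $\beta:=\sigma_X^{-3}\sum_i\rho_i=\sum_i\EE\abs{Y_i}^3$, which is exactly the quantity I want to see on the right-hand side. Lyapunov's inequality gives $\tau_i^2\le(\EE\abs{Y_i}^3)^{2/3}\le\beta^{2/3}$ and, more usefully, $\tau_i^4\le\beta^{1/3}\,\EE\abs{Y_i}^3$, so that $\sum_i\tau_i^4\le\beta^{4/3}$. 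Since the left-hand side of the asserted inequality never exceeds $1$, the bound is trivial as soon as $\beta$ is bounded below by a fixed absolute constant (take $C_0$ large enough), so I may assume $\beta$ is as small as convenient; in particular $\tau_i^2\le\tfrac14$ for every $i$.

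The engine is the Esseen smoothing inequality: writing $F$ for the distribution function of $S$, and $f(t)=\prod_i\EE e^{itY_i}$, $g(t)=e^{-t^2/2}$ for the characteristic functions of $S$ and of the standard normal, one has, for every $T>0$,
\[
\sup_{x\in\RR}\abs{F(x)-\Phi(x)}\ \le\ \frac1\pi\int_{-T}^{T}\abs{\frac{f(t)-g(t)}{t}}\,dt\ +\ \frac{c}{T},
\]
where $c$ is an absolute constant whose only input is $\norm{\Phi'}_\infty=(2\pi)^{-1/2}<\infty$ (this is proved by Fourier inversion against a Fej\'er kernel, and is really the technical heart of the matter). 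I would take $T:=c'/\beta$ for a small absolute constant $c'$, so the tail term $c/T$ is already $O(\beta)$, and then estimate $\abs{f(t)-g(t)}$ on $\abs{t}\le T$ using two ingredients. First, a per-factor Taylor estimate: since $\EE Y_i=0$ and $\abs{e^{iu}-(1+iu-\tfrac{u^2}{2})}\le\tfrac{\abs{u}^3}{6}$, one gets $\EE e^{itY_i}=1-\tfrac12 t^2\tau_i^2+r_i(t)$ with $\abs{r_i(t)}\le\tfrac16\abs{t}^3\EE\abs{Y_i}^3$, and hence (comparing with $e^{-t^2\tau_i^2/2}$) $\abs{\EE e^{itY_i}-e^{-t^2\tau_i^2/2}}\le\tfrac16\abs{t}^3\EE\abs{Y_i}^3+\tfrac18 t^4\tau_i^4$. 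Second, Gaussian decay of the product: a short symmetrisation argument — $\abs{\EE e^{itY_i}}^2=\EE\cos\!\bigl(t(Y_i-Y_i')\bigr)$ for an independent copy $Y_i'$, combined with $\cos u\le 1-\tfrac{u^2}{2}+\tfrac{\abs{u}^3}{6}$ and $1+x\le e^x$ — gives $\abs{\EE e^{itY_i}}\le\exp\!\bigl(-\tfrac12 t^2\tau_i^2+\tfrac23\abs{t}^3\EE\abs{Y_i}^3\bigr)$ for all $t$, so that for $\abs{t}\le T=c'/\beta$ and $\tau_i^2\le\tfrac14$ the partial product $\prod_{k<i}\abs{\EE e^{itY_k}}\cdot\prod_{k>i}e^{-t^2\tau_k^2/2}\le e^{-ct^2}$ for an absolute $c>0$. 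Feeding both into the telescoping bound $\abs{\prod a_k-\prod b_k}\le\sum_i\abs{a_i-b_i}\prod_{k<i}\abs{a_k}\prod_{k>i}\abs{b_k}$ with $a_k=\EE e^{itY_k}$ and $b_k=e^{-t^2\tau_k^2/2}$ yields $\abs{f(t)-g(t)}\le e^{-ct^2}\bigl(\tfrac16\abs{t}^3\beta+\tfrac18 t^4\sum_i\tau_i^4\bigr)$ on $\abs{t}\le T$; dividing by $\abs{t}$, integrating over $\RR$, and using $\sum_i\tau_i^4\le\beta^{4/3}\le\beta$ turns the first term of Esseen's inequality into $O(\beta)+O(\beta^{4/3})=O(\beta)$, which together with the tail term $O(\beta)$ completes the main estimate. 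The ``in particular'' clause is then immediate: $\abs{X_i}\le M$ almost surely forces $\rho_i=\EE\abs{X_i}^3\le M\,\EE X_i^2=M\sigma_i^2$, whence $\sigma_X^{-3}\sum_i\rho_i\le M\sigma_X^{-1}=O(\sigma_X^{-1})$.

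The main obstacle, for a fully self-contained proof, is the Esseen smoothing inequality itself — converting control of $f-g$ on a bounded frequency window into a uniform bound on the distribution functions — which requires the Fej\'er-kernel smoothing trick and some care with constants; I would simply cite it. Within the elementary part, the one delicate point is making sure every error term comes out $O(\beta)$ rather than $O(\beta^{2/3})$: this is exactly where the Lyapunov bound $\sum_i\tau_i^4\le\beta^{4/3}$ is used, and where the truncation level $T\asymp\beta^{-1}$ has to be matched to the window on which $\prod_i\abs{\EE e^{itY_i}}$ still enjoys Gaussian decay. An alternative that avoids characteristic functions is Lindeberg's replacement method — swap the $X_i$ one at a time for matched Gaussians, Taylor-expand a smooth test function to third order, and then pass from smooth test functions to indicators by a separate smoothing step — but it runs into essentially the same final smoothing issue, so I would stay with the Fourier approach.
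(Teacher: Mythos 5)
This statement is the classical Berry--Esseen theorem, which the paper does not prove at all: it is quoted with a citation (``see, e.g., [BE56]''), and the only content the paper itself supplies is the final ``in particular'' clause, asserted without justification. Your sketch is therefore doing strictly more than the paper, and what you do is sound: it is the standard Esseen characteristic-function argument (normalise, reduce to $\beta=\sigma_X^{-3}\sum_i\rho_i$ small, apply the smoothing inequality with cutoff $T\asymp\beta^{-1}$, combine the per-factor Taylor estimate with the symmetrisation bound $\abs{\EE e^{itY_i}}\le\exp(-\tfrac12 t^2\tau_i^2+\tfrac23\abs{t}^3\EE\abs{Y_i}^3)$ and the telescoping product inequality). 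The individual estimates check out, including the Lyapunov step $\sum_i\tau_i^4\le\beta^{4/3}$ that keeps every error term at $O(\beta)$, and your deduction of the ``in particular'' clause from $\rho_i\le M\sigma_i^2$ is exactly the intended (and correct) one. The one caveat is the one you flag yourself: the Esseen smoothing inequality is itself taken as a black box, so your argument, like the paper's, ultimately rests on a citation --- just a more elementary one. For the purposes of this paper the black-box citation is entirely appropriate, and there is nothing to fix in your proposal.
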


When using the Berry--Esseen bound, we need some simple facts about the function $\Phi$.

\begin{lemma}\label{lem:psi}
Let $\Psi(x):=1-\Phi(x)$ be the probability that a standard normal variable takes value higher than $x$,
i.e., $\Psi(x)=\frac{1}{\sqrt{2\pi}}\int_{x}^{\infty} e^{-t^2/2}dt$. Then
\begin{enumerate}[(i)]
    \item $\Psi$ is a contraction, i.e., $|\Psi(x)-\Psi(y)|\le |x-y|$;
    \item for $x,y$ with $x+y<0$ and $|x|+|y|\leq c$ for a constant~$c$, there exists $C>0$ only depending on $c$ such that $\Psi(x)+\Psi(y) \geq 1-C(x+y)$.
\end{enumerate}
\end{lemma}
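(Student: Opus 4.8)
The plan is to derive both statements from the single observation that $\Psi'(t)=-\phi(t)$, where $\phi(t)=\tfrac{1}{\sqrt{2\pi}}e^{-t^2/2}$ is the standard normal density, together with the reflection identity $\Psi(-t)=\Phi(t)$, i.e. $\Psi(t)+\Psi(-t)=1$. Both parts are then one-line consequences of the mean value theorem.

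For (i), I would note that $\phi(t)\le\phi(0)=\tfrac{1}{\sqrt{2\pi}}<1$ for every $t$, so $\lvert\Psi'\rvert$ is bounded by $\tfrac{1}{\sqrt{2\pi}}$ everywhere. The mean value theorem gives $\lvert\Psi(x)-\Psi(y)\rvert=\lvert\Psi'(\xi)\rvert\,\lvert x-y\rvert\le\tfrac{1}{\sqrt{2\pi}}\lvert x-y\rvert\le\lvert x-y\rvert$ for some $\xi$ between $x$ and $y$; in particular $\Psi$ is a contraction (one even gets the slightly sharper constant $1/\sqrt{2\pi}$, but $1$ is all we need later).

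For (ii), I would use the reflection identity to write $\Psi(x)+\Psi(y)=1+\bigl(\Psi(y)-\Psi(-x)\bigr)$, which reduces the claim to showing $\Psi(y)-\Psi(-x)\ge C\bigl((-x)-y\bigr)$. Since $x+y<0$ we have $-x>y$, so $[y,-x]$ is a genuine interval, and since $\lvert x\rvert+\lvert y\rvert\le c$ both endpoints lie in $[-c,c]$; hence $\phi(t)\ge\tfrac{1}{\sqrt{2\pi}}e^{-c^2/2}=:C$ for all $t\in[y,-x]$. Integrating $-\Psi'=\phi\ge C$ over this interval yields
\[
\Psi(y)-\Psi(-x)=\int_{y}^{-x}\phi(t)\,dt\ \ge\ C\bigl((-x)-y\bigr)=-C(x+y),
\]
which is exactly the desired inequality with $C$ depending only on $c$.

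I do not expect any real obstacle: the content is entirely classical. The only points requiring a little care are bookkeeping — remembering that $\Psi$ is \emph{decreasing}, that $x+y<0$ forces $y<-x$ so that the ``mass gap'' $\Psi(y)-\Psi(-x)$ is nonnegative, and that the hypothesis $\lvert x\rvert+\lvert y\rvert\le c$ is precisely what is used to localize the integration interval inside $[-c,c]$ and thereby keep $C$ bounded away from $0$.
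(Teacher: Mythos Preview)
Your proof is correct and essentially the same as the paper's. The paper writes (i) directly as $\bigl|\Psi(x)-\Psi(y)\bigr|=\bigl|\tfrac{1}{\sqrt{2\pi}}\int_x^y e^{-t^2/2}\,dt\bigr|\le|x-y|$, and for (ii) sets $C=(2\pi)^{-1/2}e^{-c^2/2}$ and uses the identity $\Psi(x)+\Psi(y)=1+\tfrac{1}{\sqrt{2\pi}}\int_x^{-y}e^{-t^2/2}\,dt$, which is your integral $\int_y^{-x}\phi(t)\,dt$ after the substitution $t\mapsto -t$; the lower bound on $\phi$ and the resulting constant are identical.
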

\begin{proof}
(i) %Immediately follows from Mean Value Theorem.
 $|\Psi(x)-\Psi(y)|=\left|\frac{1}{\sqrt{2\pi}}\int_{x}^{y}e^{-t^2/2}\,dt\right|\le |x-y|$.

\noindent (ii) 
Let $C=(2\pi)^{-1/2}e^{-c^2/2}$. Then 
\begin{align*}
    \Psi(x)+\Psi(y)=1+\frac{1}{\sqrt{2\pi}}\int_{x}^{-y}e^{-t^2/2}\,dt \geq 1+\int_{x}^{-y}C\,dt=1-C(x+y).\tag*{\qedhere}
\end{align*}
\end{proof}
The rest of this section includes various lemmas that approximate binomial variables to one another.
We omit the proof of the first lemma below, which can be found, e.g., in~\cite[Lemma 9]{BD20}.

\begin{lemma}\label{lem:binom_shift}
Let $X\sim \Bin(n,p)$ and $Y\sim \Bin(m,p)$ be independent. There is a universal constant $C$
(independent of $m, n$, and $p$) such that for all $t$ we have
\begin{align*}
\big|\PP[X-Y=t+1] - \PP[X-Y=t]\big| \le \frac{C}{(m+n)p(1-p)}.
\end{align*}
\end{lemma}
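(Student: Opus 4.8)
The plan is to prove the estimate by Fourier inversion on $\mathbb{Z}$, exploiting that $X$, $Y$, and hence $X-Y$ are integer-valued. (For non-integer $t$ both probabilities vanish, so assume $t\in\mathbb{Z}$.) By independence, the characteristic function of $X-Y$ is $\varphi(\theta):=\EE[e^{i\theta(X-Y)}]=(1-p+pe^{i\theta})^n(1-p+pe^{-i\theta})^m$, and Fourier inversion gives $\PP[X-Y=k]=\frac{1}{2\pi}\int_{-\pi}^{\pi}\varphi(\theta)e^{-ik\theta}\,d\theta$ for every integer $k$. Subtracting these identities for $k=t+1$ and $k=t$ and pulling the common factor $e^{-it\theta}$ out of the integral yields
\[
\PP[X-Y=t+1]-\PP[X-Y=t]=\frac{1}{2\pi}\int_{-\pi}^{\pi}\varphi(\theta)\,e^{-it\theta}\paren{e^{-i\theta}-1}\,d\theta,
\]
so that $\abs{\PP[X-Y=t+1]-\PP[X-Y=t]}\le\frac{1}{2\pi}\int_{-\pi}^{\pi}\abs{\varphi(\theta)}\cdot\abs{e^{-i\theta}-1}\,d\theta$, a quantity no longer depending on $t$.

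The next step is to bound the two factors of the integrand. The phase factor satisfies $\abs{e^{-i\theta}-1}=2\abs{\sin(\theta/2)}\le\abs\theta$. For the modulus of $\varphi$, note that $1-p+pe^{-i\theta}$ is the complex conjugate of $1-p+pe^{i\theta}$, so the two have equal absolute value; consequently $\abs{\varphi(\theta)}=\abs{1-p+pe^{i\theta}}^{\,n+m}$ (combining $n$ and $m$ into a single exponent). Since $\abs{1-p+pe^{i\theta}}^{2}=(1-p+p\cos\theta)^2+p^2\sin^2\theta=1-4p(1-p)\sin^2(\theta/2)$, the bounds $1-x\le e^{-x}$ and Jordan's inequality $\sin^2(\theta/2)\ge\theta^2/\pi^2$ on $[-\pi,\pi]$ give
\[
\abs{\varphi(\theta)}\le\exp\!\paren{-2(n+m)p(1-p)\sin^2(\theta/2)}\le\exp\!\paren{-2(n+m)p(1-p)\theta^2/\pi^2}.
\]

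Finally, inserting both estimates and enlarging the domain of integration from $[-\pi,\pi]$ to $\RR$, with $a:=2(n+m)p(1-p)/\pi^2$ one obtains
\[
\abs{\PP[X-Y=t+1]-\PP[X-Y=t]}\le\frac{1}{2\pi}\int_{\RR}\abs\theta\,e^{-a\theta^2}\,d\theta=\frac{1}{2\pi a}=\frac{\pi}{4(n+m)p(1-p)},
\]
using $\int_{\RR}\abs\theta\,e^{-a\theta^2}\,d\theta=1/a$; this proves the lemma with the universal constant $C=\pi/4$ (the statement being vacuous when $p(1-p)=0$ or $n=m=0$). There is no real obstacle here, as the argument is routine harmonic analysis on the circle group; the one point deserving attention is the conjugate-modulus identity $\abs{\varphi(\theta)}=\abs{1-p+pe^{i\theta}}^{n+m}$, since it is exactly this that produces $n+m$ in the denominator rather than $\min(n,m)$. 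A purely combinatorial alternative also works — bound $\max_k\abs{\PP[X=k+1]-\PP[X=k]}$ by $O\!\paren{1/(np(1-p))}$ using the ratio $\PP[X=k+1]/\PP[X=k]=(n-k)p/\paren{(k+1)(1-p)}$, average this against the law of $Y$, and do the symmetric computation averaging against $X$ — but it is more cumbersome and gives a worse constant, so I would present the Fourier proof.
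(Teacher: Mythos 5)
Your proof is correct: the Fourier inversion formula for the integer-valued variable $X-Y$, the identity $\abs{1-p+pe^{i\theta}}^2=1-4p(1-p)\sin^2(\theta/2)$, the conjugate-modulus step giving the exponent $n+m$, and the Gaussian tail integral are all carried out accurately, yielding the bound with $C=\pi/4$. Note that the paper does not prove this lemma at all --- it is quoted from Berkowitz--Devlin \cite[Lemma~9]{BD20}, whose argument is likewise characteristic-function based --- so your write-up is essentially the standard route and would serve as a correct, self-contained substitute for that citation.
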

%\noindent For a proof, see, e.g.,
\begin{lemma}\label{lem:Binomial=0}
Let $X\sim \Bi(n,p)$ and $Y\sim \Bi(m,p)$ be independent. 
Suppose that $\frac{1}{n}\ll p\ll \frac{1}{\log n}$ and that $|n-m|\leq \sqrt{n\log n}$. 
%Let $X:=\sum_{i=1}^n X_i$ and $Y:=\sum_{j=1}^m Y_j$. 
Then $\PP[X=Y]=\Theta\big(\frac{1}{\sqrt{np}}\big)$ and $\PP[X\geq Y]=\frac{1}{2}\pm O\big(\frac{1+|n-m|p}{\sqrt{np}}\big)$.
\end{lemma}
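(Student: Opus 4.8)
The plan is to prove the two assertions separately, deriving everything from \cref{berry} (Berry--Esseen), the Chernoff bound, and the smoothness estimate \cref{lem:binom_shift}. Write $\sigma:=\sqrt{(n+m)p(1-p)}$. I would first record three elementary consequences of the hypotheses used throughout: $\sigma=\Theta(\sqrt{np})$ (because $|n-m|\le\sqrt{n\log n}=o(n)$ and $p=o(1)$), and $(n-m)p=o(\sqrt{np})$ as well as $\sqrt{(n-m)p}=o(\sqrt{np})$ (both because $p\ll1/\log n$). I would also note the uniform bound $\max_k\PP[\Bin(N,q)=k]=O\big(1/\sqrt{Nq(1-q)}\big)$: this follows from \cref{berry} by writing the point mass as $\Phi\big(\tfrac{k-Nq}{\sqrt{Nq(1-q)}}\big)-\Phi\big(\tfrac{k-1-Nq}{\sqrt{Nq(1-q)}}\big)+O\big(1/\sqrt{Nq(1-q)}\big)$ and bounding the difference of the two values of $\Phi$ by $O\big(1/\sqrt{Nq(1-q)}\big)$ using that $\Phi$ is Lipschitz (\cref{lem:psi}(i)). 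In particular $\max_k\PP[\Bin(n,p)=k]$ and $\max_k\PP[\Bin(m,p)=k]$ are $O(1/\sqrt{np})$.

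For $\PP[X\ge Y]$, I would apply \cref{berry} to the centered sum $(X-Y)-(n-m)p=\sum_{i=1}^n(Z_i-p)-\sum_{j=1}^m(W_j-p)$ with the $Z_i,W_j$ i.i.d.\ Bernoulli$(p)$; the summands are bounded, so the error is $O(1/\sigma)=O(1/\sqrt{np})$ and therefore $\PP[X\ge Y]=\PP[X-Y\ge0]=\Psi\big(-(n-m)p/\sigma\big)+O(1/\sqrt{np})$. Since $\Psi(0)=\tfrac12$ and $\Psi$ is a contraction (\cref{lem:psi}(i)), $\big|\Psi(-(n-m)p/\sigma)-\tfrac12\big|\le|n-m|p/\sigma=O\big(|n-m|p/\sqrt{np}\big)$, which gives $\PP[X\ge Y]=\tfrac12\pm O\big((1+|n-m|p)/\sqrt{np}\big)$ — the ``$1$'' coming from the Berry--Esseen error and the ``$|n-m|p$'' from the shift of the mean of $X-Y$.

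For $\PP[X=Y]=\Theta(1/\sqrt{np})$, the upper bound is immediate from $\PP[X=Y]=\sum_k\PP[X=k]\PP[Y=k]\le\max_k\PP[Y=k]=O(1/\sqrt{np})$, so the work is the matching lower bound. Here I would assume $n\ge m$ (the case $n<m$ being symmetric) and split $X=A+B$ with $A\sim\Bin(m,p)$, $B\sim\Bin(n-m,p)$ independent and independent of $Y\sim\Bin(m,p)$; then $D:=A-Y$ is symmetric about $0$ and $\PP[X=Y]=\PP[D=-B]=\sum_{b\ge0}\PP[B=b]\,\PP[D=b]$. Step one: by Cauchy--Schwarz and the Chernoff bound, $\PP[D=0]=\sum_k\PP[A=k]^2\ge|I|^{-1}\big(\sum_{k\in I}\PP[A=k]\big)^2=\Omega(1/\sqrt{np})$ for $I:=\{k:|k-mp|\le10\sqrt{mp}\}$, since $|I|=O(\sqrt{np})$ and $\sum_{k\in I}\PP[A=k]\ge\tfrac12$. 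Step two: \cref{lem:binom_shift} gives $|\PP[D=b]-\PP[D=0]|\le b\cdot O(1/(np))$, so for all $b$ up to $K:=(n-m)p+10\sqrt{(n-m)p}+O(1)=o(\sqrt{np})$ one has $\PP[D=b]\ge\tfrac12\PP[D=0]=\Omega(1/\sqrt{np})$ once $n$ is large. Step three: the Chernoff bound gives $\PP[B\le K]\ge\tfrac12$ and $B\ge0$ always, so $\PP[X=Y]\ge\sum_{0\le b\le K}\PP[B=b]\,\PP[D=b]\ge\tfrac12\cdot\Omega(1/\sqrt{np})$, as desired.

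The hard part is exactly this lower bound $\PP[X=Y]=\Omega(1/\sqrt{np})$: Berry--Esseen pins point masses only to within an additive $O(1/\sqrt{np})$, which is precisely the order of the quantity being bounded, so by itself it cannot show the mass at $0$ does not vanish. What rescues the argument is the ``local flatness'' of the binomial supplied by \cref{lem:binom_shift} together with the anticoncentration bound $\PP[D=0]=\Omega(1/\sqrt{np})$; the hypothesis $p\ll1/\log n$ enters exactly in ensuring $K=o(\sqrt{np})$, so that the flat window around $0$ still catches the typical range of the ``defect'' variable $B$. The only other nuisance is the asymmetry of $X-Y$ when $n\ne m$, which is what necessitates the $X=A+B$ decomposition.
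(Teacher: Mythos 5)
Your proof is correct, and for the harder half of the lemma it takes a genuinely different route from the paper's. For $\PP[X\ge Y]$ the two arguments are essentially equivalent: the paper splits the event as $\PP[X-Y\ge(n-m)p]+\PP[0\le X-Y<(n-m)p]$ and bounds the second piece by $(n-m)p$ times the maximal point mass, whereas you absorb the mean shift through the Lipschitz property of $\Phi$; both produce the same $O\big(\tfrac{1+|n-m|p}{\sqrt{np}}\big)$ error. The real divergence is in the lower bound $\PP[X=Y]=\Omega(1/\sqrt{np})$. The paper first gets $r:=\max_j\PP[X-Y=j]=\Omega(1/\sigma)$ from Chebyshev, then invokes (without proof) that the probability mass function of $X-Y$ is unimodal with mode at one of the two integers nearest the mean $(n-m)p$, and finally walks the mode down to $0$ via \cref{lem:binom_shift}; the upper bound $r=O(1/\sigma)$ comes from observing that the same flatness forces $\Omega(\sigma)$ points to carry mass $\ge r/2$. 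Your decomposition $X=A+B$ with $A\sim\Bi(m,p)$, $B\sim\Bi(n-m,p)$ replaces the unimodality claim by the exact symmetry of $D=A-Y$ together with the Cauchy--Schwarz identity $\PP[D=0]=\sum_k\PP[A=k]^2\ge|I|^{-1}\big(\sum_{k\in I}\PP[A=k]\big)^2$, and then uses \cref{lem:binom_shift} only on the window $0\le b\le K=o(\sqrt{np})$ where $B$ typically lives; your upper bound via $\max_k\PP[Y=k]=O(1/\sqrt{mp})$ is also more direct. What your version buys is self-containedness: the unimodality of a difference of independent binomials (log-concavity of the convolution) and the location of its mode are true but nontrivial facts that the paper leaves unjustified, and your argument never needs them. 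What it costs is the extra bookkeeping of the defect variable $B$ and the verification $K=o(\sqrt{np})$, which is exactly where the hypotheses $|n-m|\le\sqrt{n\log n}$ and $p\ll 1/\log n$ enter -- the same place they enter in the paper's step $\PP[X=Y]\ge r-C\cdot\frac{|n-m|p+1}{(n+m)p(1-p)}\ge r/2$. Both arguments are sound; yours is a valid, slightly more elementary substitute.
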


\begin{proof}
Since $X-Y$ has mean $(n-m)p$ and standard deviation $\sigma=\sqrt{(n+m)p(1-p)}$, it follows from Chebyshev's inequality that
\begin{align}\label{eq:P>1/4}
    \PP\left[|X-Y -(n-m)p|\leq 2\sigma\right]\geq 3/4.
\end{align}
Now let $r:=\max_j\PP[X-Y=j]$. Then \eqref{eq:P>1/4} implies $r=\Omega(\frac{1}{\sigma})=\Omega(\frac{1}{\sqrt{np}})$.
The probability mass function of $X-Y$ is unimodal and attains the maximum at one of the two closest integers to the mean $(n-m)p$.
Thus, by \cref{lem:binom_shift}, $\PP[X=Y]\geq r-C\cdot \frac{|n-m|p+1}{(n+m)p(1-p)} \ge r/2$. Here $C>0$ is the constant given by \cref{lem:binom_shift}.
The same lower bound %$\Omega(r)$ 
$r/2$ in fact holds for any $\PP[X=Y+j]$ with %$j=(n-m)p\pm C$
$j=(n-m)p\pm \frac{\sigma}{4C}$. 
Hence,  $r=O\big(\frac{1}{\sigma}\big)=O\big(\frac{1}{\sqrt{np}}\big)$. Therefore, $\PP[X=Y]=\Theta(r)=\Theta\big(\frac{1}{\sqrt{np}}\big)$.
%Therefore, $\PP[X=Y]=O\big(\frac{1}{\sqrt{np}}\big)$ too.

\medskip

Without loss of generality we may assume that $n\ge m$. We write
\[
\PP[X\ge Y]=\PP[X-Y\ge (n-m)p]+\PP[0\le X-Y<(n-m)p].
\]
As $\max_{j}\PP[X-Y=j]=O\big(\frac{1}{\sqrt{np}}\big)$, we get $\PP[0\le X-Y<(n-m)p]=O\big(\frac{1+(n-m)p}{\sqrt{np}}\big)$. Moreover, \cref{berry} gives $\PP[X-Y\ge (n-m)p]=\Phi(0)\pm O\big(\frac{1}{\sigma}\big)=\frac{1}{2}\pm O\big(\frac{1}{\sqrt{np}}\big)$. 
Thus, it follows that
$$\PP[X\ge Y]=\frac{1}{2}\pm O\left(\frac{1+(n-m)p}{\sqrt{np}}\right),$$
as desired.
\end{proof}

\begin{lemma} \label{zw}
Let $Z_1, Z_2, W_1,W_2$ be mutually independent random variables with nonnegative integer values. For $Z:=Z_1+Z_2$ and $W:=W_1+W_2$, 
$$
- \EE[W_2] \max_{k} \PP \left[ Z_1 -W_1  =k \right] 
\leq \PP \left[ Z-W   \geq \ell \right]   - \PP \left[ Z_1-W_1   \geq \ell \right]  
\leq \EE[Z_2] \max_{k} \PP \left[ Z_1 -W  =k \right]  
~. 
$$ 
\end{lemma}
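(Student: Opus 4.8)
The plan is to interpolate between $Z-W$ and $Z_1-W_1$ in two steps, passing through the intermediate variable $Z_1-W$, and to control each step by a single elementary observation. That observation is: if $U$ is an integer-valued random variable independent of a nonnegative-integer-valued random variable $B$, then for every real $\ell$,
\[
0 \;\le\; \PP[U+B\ge \ell]-\PP[U\ge \ell] \;\le\; \EE[B]\,\max_{k}\PP[U=k]
\qquad\text{and}\qquad
0 \;\le\; \PP[U\ge \ell]-\PP[U-B\ge \ell] \;\le\; \EE[B]\,\max_{k}\PP[U=k].
\]
Both are immediate: the first middle quantity equals $\PP[\ell-B\le U<\ell]$ and the second equals $\PP[\ell\le U<\ell+B]$, so conditioning on $B=j$ each conditional probability is the probability that the integer-valued $U$ lies in an interval of length $j$, hence at most $j\max_k\PP[U=k]$; averaging over $B$ gives the upper bounds, while the lower bounds hold because $U+B\ge U\ge U-B$ pointwise.

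For the upper bound of the lemma I would write $Z-W=(Z_1-W)+Z_2$ and apply the first inequality with $U=Z_1-W$ and $B=Z_2$, which are independent, obtaining $\PP[Z-W\ge\ell]\le \PP[Z_1-W\ge\ell]+\EE[Z_2]\max_k\PP[Z_1-W=k]$. Then, since $Z_1-W=(Z_1-W_1)-W_2$ with $W_2\ge 0$, the second inequality (its lower part, with $U=Z_1-W_1$ and $B=W_2$) gives $\PP[Z_1-W\ge\ell]\le\PP[Z_1-W_1\ge\ell]$. Chaining the two displays yields exactly $\PP[Z-W\ge\ell]-\PP[Z_1-W_1\ge\ell]\le\EE[Z_2]\max_k\PP[Z_1-W=k]$.

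For the lower bound I would traverse the same chain but use the inequalities in the opposite directions. First, $Z-W=(Z_1-W)+Z_2$ with $Z_2\ge0$ gives $\PP[Z-W\ge\ell]\ge\PP[Z_1-W\ge\ell]$. Second, writing $Z_1-W=(Z_1-W_1)-W_2$ and applying the upper part of the second inequality with $U=Z_1-W_1$ and $B=W_2$ gives $\PP[Z_1-W\ge\ell]\ge\PP[Z_1-W_1\ge\ell]-\EE[W_2]\max_k\PP[Z_1-W_1=k]$. Chaining yields the claimed lower bound.

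There is no genuine obstacle here; the only point needing care is the bookkeeping that explains the asymmetry in the statement — why $\max_k\PP[Z_1-W=k]$ appears on the right and $\max_k\PP[Z_1-W_1=k]$ on the left. This is dictated by which of the two stochastically ordered variables serves as the reference variable $U$ at the step where the error is incurred: in the upper-bound chain the lossy step is $Z-W\rightsquigarrow Z_1-W$ (reference $Z_1-W$), whereas in the lower-bound chain it is $Z_1-W\rightsquigarrow Z_1-W_1$ (reference $Z_1-W_1$). One should also note that the bounds are vacuously true when $\EE[Z_2]$ or $\EE[W_2]$ is infinite, and that all conditionings above are legitimate by the assumed mutual independence of $Z_1,Z_2,W_1,W_2$.
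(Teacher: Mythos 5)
Your proof is correct and follows essentially the same route as the paper's: both interpolate through the intermediate variable $Z_1-W$, identify each one-step difference as the probability that the reference variable falls in a random-length integer interval (equivalently, the sum $\sum_{k\ge 1}\PP[U=\ell\mp k\ldots]\PP[B\ge k]$), and bound it by $\EE[B]\max_k\PP[U=k]$. The only cosmetic difference is that you package the computation as a reusable two-sided observation and invoke it twice, whereas the paper writes out the two telescoping identities explicitly.
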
 

\begin{comment}
\begin{proof} 
Note first that 
\begin{align} \label{eq:Z-W}
\PP [ Z-W   \geq \ell ] 
&= \PP [ Z_1+ Z_2 -W  \geq \ell ]\nonumber \\
&= \PP [ Z_1 -W  \geq \ell ] 
+\sum_{k=1}^{\infty} \PP [ Z_1 -W  =\ell-k \text{ and } Z_2 \geq k  ] \nonumber \\
&= \PP [ Z_1 -W  \geq \ell ] +\sum_{k=1}^{\infty} \PP [ Z_1 -W  =\ell-k ] \, 
\PP[ Z_2 \geq k  ]~,
\end{align} 
where the last equality uses independence.
An analogous argument also gives
\begin{align*} 
\PP [ Z_1-W   \geq \ell ] 
= \PP [ Z_1 -W_1-W_2 \geq \ell ] 
=\sum_{k=1}^{{\infty}} \PP [ Z_1 -W_1  =\ell+ k -1]\, 
\PP[ W_2 \leq k -1  ]\, . 
\end{align*} 
One may rearrange the sum above as
\begin{align*}  &  \sum_{k=1}^{\infty} \PP [ Z_1 -W_1  =\ell+ k -1]\, 
\PP[ W_2 \leq k -1  ] \\
&  = \sum_{k=1}^{\infty} \PP [ Z_1 -W_1  =\ell+ k -1] - 
\sum_{k=1}^{\infty} \PP [ Z_1 -W_1  =\ell+ k -1]\, 
(1- \PP[ W_2 \leq k -1  ]) \\
 &   = \PP [ Z_1 -W_1  \geq \ell] - 
\sum_{k=1}^{\infty} \PP [ Z_1 -W_1  =\ell+ k -1]\, 
 \PP[ W_2 \geq k  ]~ .
 \end{align*}
Substituting this into~\eqref{eq:Z-W} yields
 \begin{align*}
   &\PP \left[ Z-W   \geq \ell \right]  
   - \PP \left[ Z_1-W_1   \geq \ell \right]  \\
&~~~~~~~~=\sum_{k=1}^{\infty} \Big(\PP \left[ Z_1 -W  =\ell-k \right] \cdot 
\PP\left[ Z_2 \geq k \right]  -\PP \left[ Z_1 -W_1  =\ell+ k -1\right]\cdot
\PP\left[ W_2 \geq k  \right]\Big).
\end{align*}
The desired inequalities follow from the fact  $\sum_{k=1}^{\infty} \PP[ U \geq k ] = \EE[U]$
for any non negative integer valued random variable $U$. 
\end{proof}
\end{comment}

\begin{proof}
Note first that 
\begin{align} \label{eq:Z-W}
\PP [ Z-W   \geq \ell ] -\PP [ Z_1 -W  \geq \ell ]
&= \PP \left[ Z_1+Z_2-W \geq \ell \enskip \text{and} \enskip Z_1-W<\ell \right]\nonumber \\
&= 
\sum_{k=1}^{\infty} \PP [ Z_1 -W  =\ell-k \text{ and } Z_2 \geq k  ] \nonumber \\
&= \sum_{k=1}^{\infty} \PP [ Z_1 -W  =\ell-k ] \, 
\PP[ Z_2 \geq k  ]~,
\end{align} 
where the last equality uses independence.
An analogous argument also gives 
\begin{align*}
\PP \left[ Z_1-W_1   \ge \ell \right]  
   - \PP \left[ Z_1-W   \geq \ell \right]&=\PP \left[ Z_1-W_1   \geq \ell \enskip \text{and} \enskip Z_1-W_1-W_2<\ell \right]\\  
&= \sum_{k=1}^{\infty} \PP [ Z_1 -W_1  =\ell+ k -1 \enskip \text{and} \enskip W_2\ge k]\\
&=\sum_{k=1}^{\infty} \PP [ Z_1 -W_1  =\ell+ k -1]\, 
 \PP[ W_2 \geq k  ]~ .
\end{align*}
Substituting this into~\eqref{eq:Z-W} yields
 \begin{align*}
   &\PP \left[ Z-W   \geq \ell \right]  
   - \PP \left[ Z_1-W_1   \geq \ell \right]  \\
&~~~~~~~~=\sum_{k=1}^{\infty} \Big(\PP \left[ Z_1 -W  =\ell-k \right] \cdot 
\PP\left[ Z_2 \geq k \right]  -\PP \left[ Z_1 -W_1  =\ell+ k -1\right]\cdot
\PP\left[ W_2 \geq k  \right]\Big).
\end{align*}
The desired inequalities follow from the fact  $\sum_{k=1}^{\infty} \PP[ U \geq k ] = \EE[U]$
for any non negative integer valued random variable $U$. 
\end{proof}

\begin{corollary} \label{cor:4rv} Let  $X', Y', X$,~and $Y$ be  
mutually independent random variables with $\Bi(n_i,p)$ 
distributions, $i=1,2,3,4$, respectively. Then 
$$ \PP [ X' - Y'  \geq  \ell ]
= \PP [ X - Y \geq  \ell ]
\pm O\left(  \frac{p \Delta}{ \sqrt{pn_0} }\right), $$ 
where $n_0 = \min n_i$ and  
$\Delta := \max  \{ |n_1 -n_3|, |n_2-n_4| \}$.  
\end{corollary}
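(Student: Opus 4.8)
The plan is to interpolate between $X'-Y'$ and $X-Y$ in two stages, changing one of the two binomials at a time, and to control each change with \cref{zw}. I would first introduce auxiliary variables $X''\sim\Bi(n_3,p)$ and $Y''\sim\Bi(n_4,p)$ that are independent of each other and of $X',Y'$; since $(X'',Y'')$ then has the same joint distribution as $(X,Y)$, we have $\PP[X''-Y''\ge\ell]=\PP[X-Y\ge\ell]$. By the triangle inequality it therefore suffices to prove the two one-step estimates
\[
\bigl|\PP[X'-Y'\ge\ell]-\PP[X''-Y'\ge\ell]\bigr|=O\!\left(\frac{|n_1-n_3|\,p}{\sqrt{pn_0}}\right),\qquad
\bigl|\PP[X''-Y'\ge\ell]-\PP[X''-Y''\ge\ell]\bigr|=O\!\left(\frac{|n_2-n_4|\,p}{\sqrt{pn_0}}\right),
\]
and then add them, using $|n_1-n_3|+|n_2-n_4|\le 2\Delta$.

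For the first estimate I would assume $n_1\ge n_3$, which is harmless since the estimate is symmetric in $X'$ and $X''$ (if $n_1<n_3$ one simply swaps their roles). As $\Bi(n_1,p)$ is the law of an independent sum $\Bi(n_3,p)+\Bi(n_1-n_3,p)$, realize $X'$ as $X''+U$ with $U\sim\Bi(n_1-n_3,p)$ independent of $X'',Y'$, and apply \cref{zw} with $Z_1=X''$, $Z_2=U$, $W_1=Y'$, $W_2=0$; this gives
\[
0\le\PP[X'-Y'\ge\ell]-\PP[X''-Y'\ge\ell]\le(n_1-n_3)p\cdot\max_k\PP[X''-Y'=k].
\]
For the second estimate I would likewise assume $n_2\ge n_4$, realize $Y'$ as $Y''+V$ with $V\sim\Bi(n_2-n_4,p)$ independent of $X'',Y''$, and apply \cref{zw} with $Z_1=X''$, $Z_2=0$, $W_1=Y''$, $W_2=V$, obtaining
\[
-(n_2-n_4)p\cdot\max_k\PP[X''-Y''=k]\le\PP[X''-Y'\ge\ell]-\PP[X''-Y''\ge\ell]\le 0.
\]

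It then remains to feed in the anti-concentration bound $\max_k\PP[\Bi(a,p)-\Bi(b,p)=k]=O\!\left(1/\sqrt{(a+b)p}\right)$, valid whenever the binomials involved have total index $a+b\ge n_0$. This is exactly the estimate established along the way in the proof of \cref{lem:Binomial=0}: Chebyshev's inequality confines $\Bi(a,p)-\Bi(b,p)$ to an interval of length $O(\sqrt{(a+b)p(1-p)})$ with probability at least $3/4$, \cref{lem:binom_shift} bounds the successive differences of its point-probabilities, and together these force the maximal point-probability to be $O\!\left(1/\sqrt{(a+b)p(1-p)}\right)$, which is $O\!\left(1/\sqrt{(a+b)p}\right)$ since $1-p=\Theta(1)$ in the range of $p$ under consideration. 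As $n_2,n_3,n_4\ge n_0$, both point-probabilities appearing above are $O(1/\sqrt{pn_0})$, so substituting them into the two displays and adding yields the corollary. I expect no serious obstacle here; the only points requiring care are the two ``without loss of generality'' reductions to $n_1\ge n_3$ and $n_2\ge n_4$ (justified by the symmetry of each intermediate estimate) and verifying that every binomial whose point-probability is used has index at least $n_0$, so that the anti-concentration bound is applied uniformly in $a,b$.
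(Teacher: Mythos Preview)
Your proposal is correct and follows essentially the same approach as the paper: interpolate in two stages, changing one binomial at a time via \cref{zw}, and bound the resulting point-probability maxima by $O(1/\sqrt{pn_0})$ using the Chebyshev-plus-\cref{lem:binom_shift} argument. The only cosmetic differences are that the paper pivots through $X'-Y$ rather than $X''-Y'$ and handles the sign reduction once via monotonicity of $\PP[X'-Y'\ge\ell]$ in $n_1,n_2$ (reducing to $n_1\ge n_3$, $n_2\le n_4$) instead of invoking symmetry separately at each step.
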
 

\begin{comment}
\begin{proof} 
As done in~\eqref{eq:P>1/4}, it is easy to prove the fact that the maximum probability mass of $U_1-U_2$ is $O(1/\sqrt{\min\{m_1,m_2\}p})$, where $U_i\sim\Bi(m_i,p)$, $i=1,2$, are independent.
If $n_1 \geq n_3$ and $n_2 \geq n_4$, then \cref{zw} together with this fact immediately gives
the desired estimate. 
For the case that $n_1 \leq n_3$ and $n_2 \leq n_4$,
 the estimate follows from \cref{zw} with $ (Z,W)= (X, Y)$. 

If  $n_1 \geq n_3$ and $n_2 \leq n_4$, we apply  Lemma \ref{zw} twice: First, $(Z_1,W_1)= (X', Y')$ and $(Z,W)= (X', Y)$ and then $(Z_1,W_1)= (X, Y)$
with the same $(Z,W)$. As both $\PP[ X' -Y' \geq \ell]$ and  $\PP[ X -Y \geq \ell]$ are 
$\PP[ X' -Y \geq \ell]\pm O\left(  \frac{p \Delta}{ \sqrt{pn_0} }\right)$, the desired estimate holds by triangle inequality. 
The remaining case $n_1 \leq n_3$ and $n_2 \geq n_4$ also follows in an analogous way.
\end{proof}
\end{comment}

\begin{proof}
As done in~\eqref{eq:P>1/4}, it is easy to prove the fact that the maximum probability mass of $U_1-U_2$ is $O(1/\sqrt{\min\{m_1,m_2\}p})$, where $U_i\sim\Bi(m_i,p)$, $i=1,2$, are independent.

Observe that $\PP [ X' - Y'  \geq  \ell ]$ is non-decreasing in $n_1$ and non-increasing in $n_2$. So we can assume w.l.o.g. that $n_1 \geq n_3$ and $n_2 \leq n_4$. We apply  Lemma \ref{zw} twice: First, $(Z_1,W_1)= (X', Y')$ and $(Z,W)= (X', Y)$ and then $(Z_1,W_1)= (X, Y)$
with the same $(Z,W)$. As both $\PP[ X' -Y' \geq \ell]$ and  $\PP[ X -Y \geq \ell]$ are 
$\PP[ X' -Y \geq \ell]\pm O\left(  \frac{p \Delta}{ \sqrt{pn_0} }\right)$, the desired estimate holds by triangle inequality. 
\end{proof}

\section{Morning and evening on Day 0 and the next two days}\label{sec:day0}

In what follows, $s_0$ always denotes the uniform random $\pm 1$-assignment. That is, we sample each $s_0(v)$ uniformly at random from $\pm 1$ and $s_0(v)$, $v\in [n]$, are mutually independent.
Our starting point is to observe that the random initial opinion $s_0$ makes a shift of magnitude $\sqrt{n}$ with high probability.
This is in fact a standard anti-concentration result also given in~\cite[Lemma~3.1]{FKM20}, but we give a proof for completeness. 
\begin{lemma}\label{lem:deviation}
For $\varepsilon>0$, there is $c>0$ such that
$\PP\big[|\sum_v s_0(v)|\geq 2c\sqrt{n}\big]\geq 1-\varepsilon$.
\end{lemma}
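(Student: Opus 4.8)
The plan is to recognize $S:=\sum_{v}s_0(v)$ as a sum of $n$ independent Rademacher variables and to compare its distribution with the standard normal via the Berry--Esseen bound (\cref{berry}). Concretely, put $X_i=s_0(i)$, so that $\EE[X_i]=0$, $\sigma_i^2=\EE[X_i^2]=1$, and $\rho_i=\EE[|X_i|^3]=1$; hence $\sigma_X^2=n$ and $\sum_{i=1}^n\rho_i=n$. \cref{berry} then gives
\[
\sup_{x\in\RR}\Bigl|\PP\bigl[S/\sqrt n\le x\bigr]-\Phi(x)\Bigr|\le \frac{C_0}{\sqrt n}.
\]

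Given $\varepsilon>0$, first choose $c>0$ small enough that $2\Psi(2c)\ge 1-\varepsilon/2$; this is possible since $\Psi$ is continuous with $\Psi(0)=1/2$. The two events $\{S\ge 2c\sqrt n\}$ and $\{S\le -2c\sqrt n\}$ are disjoint, and applying the Berry--Esseen estimate to each tail yields
\begin{align*}
\PP\bigl[|S|\ge 2c\sqrt n\bigr]
&=\PP\bigl[S/\sqrt n\ge 2c\bigr]+\PP\bigl[S/\sqrt n\le -2c\bigr]\\
&\ge\Bigl(1-\Phi(2c)-\tfrac{C_0}{\sqrt n}\Bigr)+\Bigl(\Phi(-2c)-\tfrac{C_0}{\sqrt n}\Bigr)
=2\Psi(2c)-\frac{2C_0}{\sqrt n}.
\end{align*}
Taking $n$ large enough that $2C_0/\sqrt n\le \varepsilon/2$ then gives $\PP[|S|\ge 2c\sqrt n]\ge 1-\varepsilon$, as required (consistent with the paper's standing convention that $n$ is sufficiently large whenever needed).

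There is no serious obstacle in this argument; it is routine anti-concentration. An essentially equivalent, Berry--Esseen-free route is also available: Stirling's formula gives $\max_k\PP[S=k]=\binom{n}{\lceil n/2\rceil}2^{-n}=O(1/\sqrt n)$, so that $\PP[|S|<2c\sqrt n]\le (4c\sqrt n+1)\cdot O(1/\sqrt n)=O(c)+O(1/\sqrt n)$, and one again picks $c$ small and $n$ large. The only point needing a word of care is absorbing the $O(1/\sqrt n)$ error term, which is immediate once $n$ is assumed large.
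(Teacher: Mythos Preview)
Your proof is correct and follows essentially the same approach as the paper: both apply the Berry--Esseen bound (\cref{berry}) to the sum of i.i.d.\ Rademacher variables, choose $c$ small enough that the two Gaussian tails together exceed $1-\varepsilon/2$ (or $1-2\varepsilon/3$ in the paper), and absorb the $O(1/\sqrt n)$ error by taking $n$ large. The only cosmetic difference is that the paper invokes the symmetry of $S$ to double one tail, whereas you bound the two tails separately and add them; the content is the same.
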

\begin{proof}
The Berry--Esseen bound, \cref{berry}, gives $\PP\big[\sum_v s_0(v)\leq x\sqrt{n}/2\big] = \Phi(x) \pm O\left(1/\sqrt{n}\right)$. Choosing $x<0$ such that $\Phi(x)=1/2-\varepsilon/3$ gives, with $c:=-x/4$,
\begin{align*}
   \PP\Big[\sum_v s_0(v)\leq -2c\sqrt{n}\Big] \geq \Phi(x) + O\left(1/\sqrt{n}\right) =1/2 -\varepsilon/3 -O\left(1/\sqrt{n}\right).
\end{align*}
By symmetry, $\PP\big[|\sum_v s_0(v)|\geq 2c\sqrt{n}\big]=2\PP\big[\sum_v s_0(v)\leq -2c\sqrt{n}\big]\geq 1-2\varepsilon/3 -O\left(1/\sqrt{n}\right)$.
\end{proof}
Let $\mathcal{U}$ be the event that unanimity is achieved after a finite number of days.
Since the edges of $G=G(n,p)$ are sampled independently from the initial opinion $s_0$, $\mathcal{U}$ only depends on the value $S_0:=\sum_v s_0(v)$ (and $G=G(n,p)$) rather than what precisely $s_0$ is. 
In fact, it only depends on $|S_0|$ by symmetry. Moreover, by monotonicity, if $|S_0|$ increases, then $\mathcal{U}$ is more likely to occur.
Thus, by \cref{lem:deviation}, for $\varepsilon>0$ there exists $c>0$ such that
\begin{align}\label{eq:s0}
    \PP[\mathcal{U}]\geq 
    \PP\big[\mathcal{U}\big||S_0|\geq 2c\sqrt{n}\big]\cdot \PP\big[|S_0|\geq 2c\sqrt{n}\big]
    \geq \PP\big[\mathcal{U}\big||S_0|=2c\sqrt{n}\big]-\varepsilon,
\end{align}
where the constant $c$ is chosen to guarantee that $c\sqrt{n}$ is an integer. Hence, this ``constant" $c$ may slightly vary depending on $n$, although within the range of $\pm 1$. For brevity, we assume that $c$ is a constant and $c\sqrt{n}$ is an integer throughout this section.

\medskip

The conditional probability space given $|S_0|=2c\sqrt{n}$ can be interpreted by ``splitting" the initial assignment into two steps, namely \emph{morning} and \emph{evening} on Day 0. 
In the morning, we choose $\lceil\frac{n}{2}\rceil$ vertices $v$ to assign $+1$ and put $-1$ to the remaining $\lfloor\frac{n}{2}\rfloor$ vertices. That is, $r_0$ defined in the introduction.
We then turn signs of randomly chosen $c\sqrt{n}$ vertices $v$ with $r_0(v)=-1$, which we call \emph{swing} vertices, from $-1$ to $1$ to obtain a new $\pm 1$-assignment $\tilde{s}_0$.
To distinguish $r_0$ and $\tilde{s}_0$ from the initial opinion $s_0$, we call $r_0$ and $\tilde{s}_0$ the \emph{morning opinion} and the \emph{evening opinion}, respectively.
We also denote by $\tilde{s}_t$, $t>0$, the Day $t$ opinion resulting from majority dynamics starting with $\tilde{s}_0$.
%Once we expose $G=G(n,p)$ and $r_0$, this is a deterministic process.
Then, by \eqref{eq:s0},
\begin{align*}
    \PP\big[\mathcal{U}\big||S_0|=2c\sqrt{n}\big]
    =\PP\big[\mathcal{U}\big|s_0=\tilde{s}_0\big]
\end{align*}
for each fixed instance of $\tilde{s}_0$.
Therefore, the following main result implies~\cref{thm:main}.
Note that~$\tilde{s}_0$ depends on the constant $c>0$.
\begin{theorem}\label{thm:main_reduced}
For $\varepsilon>0$ and $\lambda>0$ there exist $c,\lambda'>0$ such that
$\PP\left[\mathcal{U}\big|s_0=\tilde{s}_0\right]\geq 1-\varepsilon$
whenever $\lambda' n^{-3/5}\log n \leq p\leq \lambda n^{-1/2}$.
\end{theorem}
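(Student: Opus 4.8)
The plan is to follow majority dynamics through Day~$6$, tracking the bias $b_t:=\sum_v\tilde s_t(v)$, and to show that $\tilde s_6\equiv+1$ with probability $1-o(1)$; this is $\ge1-\varepsilon$ once $n$ is large, which gives the theorem. By vertex transitivity of $G(n,p)$ we may take the morning assignment $r_0$ to be a \emph{fixed} near-balanced split, and by the $\pm1$ symmetry we may take the $c\sqrt n$ swing vertices to be turned from $-1$ to $+1$, so that $b_0>0$ and the target is the all-$(+1)$ state $\sgn b_0$. Since $\tilde s_0\ge r_0$ pointwise and the majority update is monotone, $\tilde s_t\ge r_t$ pointwise for every $t$; in particular, writing $F:=\{w:r_1(w)=-1,\ \tilde s_1(w)=+1\}$ for the set of vertices \emph{flipped} by the swing on Day~$1$, one has $\sum_{w\in N(v)}\tilde s_1(w)=\sum_{w\in N(v)}r_1(w)+2\abs{F\cap N(v)}$ for every $v$. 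The argument then splits into two parts.

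\medskip
\noindent\textbf{Part (A): $b_2\ge 2\alpha pn^{3/2}$ w.h.p.} I would combine \cref{lem:day0_main} with an analysis of the swing. Conditioning on the ``morning'' data $(G,r_0,r_1)$ fixes every neighbourhood, the sign pattern $r_1$, the set $P_\gamma$ of $\gamma$-almost-positive vertices, and, for each $w$ with $r_1(w)=-1$, an integer $m_w\ge1$ such that after turning the swing set $T$ positive one has $\tilde s_1(w)=+1$ iff $\abs{T\cap N(w)}\ge m_w$. By \cref{lem:day0_main}, w.h.p.\ $\abs{P_\gamma}\ge\tfrac n2+\alpha pn^{3/2}$. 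Fix any $v\in P_\gamma$: among its $\approx\tfrac12\deg v$ neighbours $w$ with $r_1(w)=-1$, a $\Theta(1/\sqrt{np})$-fraction (i.e.\ $\Theta(\sqrt{np})$ of them) have $m_w=1$, and each such $w$ joins $F$ as soon as $T$ meets $N(w)$, which occurs with probability $\Theta(p\sqrt n)$ over the (essentially independent) choices in $T$. A concentration estimate for this weakly dependent sum then gives $\abs{F\cap N(v)}\ge c_1p^{3/2}n$ with probability $1-e^{-\Omega(p^{3/2}n)}$, for a constant $c_1=c_1(c)>0$; since $p\ge\lambda' n^{-3/5}\log n$ forces $p^{3/2}n\gg\log n$, this holds \emph{simultaneously} for all $v$ by a union bound. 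Choosing $\gamma:=c_1/2$ at the outset, every $v\in P_\gamma$ then has $\sum_{w\in N(v)}\tilde s_1(w)>-\gamma p^{3/2}n+2c_1p^{3/2}n>0$, hence $\tilde s_2(v)=+1$; therefore $b_2\ge 2\bigl(\abs{P_\gamma}-\tfrac n2\bigr)\ge 2\alpha pn^{3/2}$.

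\medskip
\noindent\textbf{Part (B): bootstrapping to unanimity.} As $pn\gg1$ in our range, $b_2\ge2\alpha pn^{3/2}\gg\sqrt n$, which puts us in the regime handled by the Fountoulakis--Kang--Makai bootstrap; I would re-run it for each remaining day. Condition on the w.h.p.\ event (\cref{lem:whp}) that every degree is $np\pm O(\sqrt{np}\log n)$. If $b_t>0$ is at least of polynomial order, then for a fixed $v$ the sum $\sum_{w\in N(v)}\tilde s_t(w)$ has mean $\approx b_tp$ and standard deviation $\Theta(\sqrt{np})$, so \cref{berry} gives $\PP[\tilde s_{t+1}(v)=+1]\ge\tfrac12+\Omega\bigl(\tfrac{b_t}{n}\sqrt{np}\bigr)$, and, exposing the graph in a suitable order, one upgrades this to $b_{t+1}\ge c_2\,b_t\sqrt{np}$ w.h.p.\ for a fixed $c_2>0$ (and to ``all but at most $\tfrac18pn$ vertices are $+1$'' once $b_t$ is large enough). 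Since $c_2\sqrt{np}\ge c_2\sqrt{\lambda'}\,n^{1/5}\sqrt{\log n}\to\infty$, a constant number of such steps -- at most four, keeping within Day~$6$ -- drives the process to a state where all but $\le\tfrac18pn$ vertices are $+1$; then every vertex has fewer than $\tfrac12\deg v$ negative neighbours (as $\deg v\ge\tfrac12 np$ w.h.p.), so the next day is unanimous. Counting days ($0\to1\to2$ in Part~(A), at most four more in Part~(B)) yields $\tilde s_6\equiv+1$, so $\PP[\mathcal U\mid s_0=\tilde s_0]\ge1-o(1)\ge1-\varepsilon$ for $n$ large.

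\medskip
\noindent\textbf{Main obstacle.} The crux is Part~(A), especially \cref{lem:day0_main} and the swing-propagation estimate above. Because $G(n,p)$ has no usable locality, the random graph, the swing set $T$, and the Day-$1$ opinions $r_1,\tilde s_1$ are all entangled, so the quantities involved -- the $\gamma$-almost-positive count and $\abs{F\cap N(v)}$ for \emph{every} potentially-positive $v$ -- cannot be decoupled by naive independence and must be extracted through a carefully staged exposure of the edges, in which the relevant signal (of order $p^{3/2}n$, up to logarithms) only barely dominates the $\Theta(n)$-scale fluctuations. This is exactly where the hypothesis $p\ge\lambda' n^{-3/5}\log n$ is used, and proving \cref{lem:day0_main} is the technical heart of the whole argument.
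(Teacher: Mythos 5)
Your Part (A) is essentially the paper's own argument: the paper proves exactly $\sum_v\tilde s_2(v)\ge\alpha pn^{3/2}$ w.h.p.\ (\cref{lem:day2_bias}) by combining \cref{lem:day0_main} with a count of ``unstable'' vertices (your $m_w=1$ vertices) in $N_-(v)$ that acquire a swing neighbour, and your identity $\sum_{w\in N(v)}\tilde s_1(w)=\sum_{w\in N(v)}r_1(w)+2\left|F\cap N(v)\right|$ together with monotonicity is the same mechanism. The concentration of $\left|F\cap N(v)\right|$ that you only gesture at is handled in the paper by a staged exposure that makes the indicators over $w\in N(v)$ conditionally independent; your sketch is thin there, but the idea is the right one.

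Part (B) is where you diverge, and as written it has a genuine gap. For $t\ge 2$ the entire graph has already been consumed in computing $\tilde s_t$: conditional on $\tilde s_t$ there is no fresh randomness left in the edges at $v$, so the claim that $\sum_{w\in N(v)}\tilde s_t(w)$ has mean $\approx b_tp$ and standard deviation $\Theta(\sqrt{np})$ is not available, and no ordering of the exposure recovers the independence needed to apply \cref{berry} and then concentrate $b_{t+1}$. (This re-exposure obstruction is precisely why the single step from Day 1 to Day 2 occupies all of Section 3 of the paper.) The paper sidesteps the issue entirely: once $b_2=\Omega(pn^{3/2})\gg\sqrt{n/p}$, which holds because $p\gg n^{-2/3}$, it switches to a deterministic expansion argument --- $G(n,p)$ is w.h.p.\ $(p,\beta)$-jumbled with $\beta=O(\sqrt{np})$ (\cref{lem:sqrtnp}), and \cref{lem:Zehmakan} shows deterministically that such a bias forces $(1-\delta/2)n$ agreement on Day 3 and then contracts the minority by a factor $O(1/(np))$ each subsequent day, giving unanimity by Day 6. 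Note also that your threshold $b_2\gg\sqrt n$ is not the relevant one; what is needed is $b_2\gg\beta/p=\Theta(\sqrt{n/p})$, which your Part (A) does deliver. So the theorem is recoverable from your Part (A), but Part (B) must be replaced by a deterministic pseudorandomness argument rather than a day-by-day Berry--Esseen bootstrap.
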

To summarize, there are three types of random instances:
\begin{enumerate}[(1)]
    \item The edges of $G=G(n,p)$;
    \item The morning opinion $r_0$ chosen uniformly at random among those with exactly $\lceil n/2\rceil$ 1's;
    \item The $c\sqrt{n}$ swing vertices chosen uniformly at random from $\lfloor n/2\rfloor$ vertices $v$ with $r_0(v)=-1$.
\end{enumerate}
The edges of $G=G(n,p)$ appear independently from (2) and (3). Note that (3) is \emph{not} independent from (2), as we turn the signs of those vertices $v$ with $r_0(v)=-1$.
The distribution $\tilde{s}_0$ depends on both (2) and (3). We may also say that $\tilde{s}_0$ is obtained by ``changing" $r_0$ according to~(3).
The independence allows us to analyze probability while swapping the order of the random instances. 
For example, exposing the events in the order (1), (2) and (3) is the same as exposing some edges in (1) first, (2) and (3) second and then exposing the rest of the edges.

\medskip

Our plan is to compare the two parallel consequences of majority dynamics with the morning opinion $r_0$ and the evening opinion~$\tilde{s}_0$, respectively.
As sketched roughly in the introduction, if $\sum_{w \in N(v)} r_1(w)$ is ``almost-positive", then the vertex $v$ is highly likely to satisfy $\tilde{s}_2(v)=+1$. 
That is, such a vertex $v$ has ``many" neighbors that change their signs on Day 1 by the effect of swing neighbors and hence, $\tilde{s}_2(v)=+1$.  

As the number of such almost-positive vertices $v$ is slightly larger than $n/2$ by~\cref{lem:day0_main}, $\sum_{v} \tilde{s}_2(v)$ evaluates to a non-negligible positive value.
This is formalized by~\cref{lem:day2_bias} below. 
In what follows in this section, $\varepsilon\in(0,1)$ and $\lambda>0$ are fixed constants and we assume $G=G(n,p)$ with $\lambda' n^{-3/5}\log n\leq p\leq \lambda n^{-1/2}$,
where $\lambda'$ will be suitably chosen in the proofs.

\begin{lemma}\label{lem:day2_bias}
For each $c>0$, there exists $\alpha>0$ such that $\sum_{v\in V(G)} \tilde{s}_2(v)\geq \alpha pn^{3/2}$ w.h.p.
\end{lemma}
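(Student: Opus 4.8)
The plan is to fix a vertex $v$ and estimate $\PP[\tilde s_2(v)=+1]$ from below, then sum over all $v$ and concentrate. The starting point is the partition of $V(G)$ into the three sign classes of $r_0$ and the swing set $R$ of size $c\sqrt n$; writing the neighbourhood of $v$ accordingly, $r_1(v)=\sgn$ of a sum of two independent binomials (the $+1$-neighbours minus the $-1$-neighbours, each concentrated around $np/2$ by \cref{lem:whp}(i)), and $\tilde s_1(v)$ differs from $r_1(v)$ only when the contribution of the swing neighbours of $v$ flips the sign. First I would make precise the heuristic in the text: a vertex $v$ is ``$\gamma$-almost-positive'' when $\sum_{w\in N(v)}r_1(w)>-\gamma p^{3/2}n$, and I claim that for such $v$, conditioning on the edges incident to $v$ and on $r_1$ restricted to $N(v)$, we have $\PP[\tilde s_2(v)=+1]\ge \tfrac12+\Omega(1)$ — more precisely at least $\tfrac12+c'$ for a constant $c'=c'(\gamma,c)$ — because among the roughly $np$ neighbours of $v$, a positive fraction of order $p^{1/2}$ change their Day-1 sign from $-1$ to $+1$ due to their own swing neighbours, and the deficit $\sum_{w\in N(v)}r_1(w)$ is at most $\gamma p^{3/2}n$ in magnitude, which is smaller than this gain once $\gamma$ is chosen small. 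This is exactly the kind of binomial-comparison estimate that \cref{cor:4rv}, \cref{lem:Binomial=0} and \cref{lem:psi} are designed to feed.

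The second ingredient is \cref{lem:day0_main}: w.h.p.\ the number of $\gamma$-almost-positive vertices is at least $\tfrac n2+\alpha_0 pn^{3/2}$ for a constant $\alpha_0=\alpha_0(\gamma)>0$. Combining, I would write $\sum_v \tilde s_2(v)=\#\{v:\tilde s_2(v)=+1\}-\#\{v:\tilde s_2(v)=-1\}=2\#\{v:\tilde s_2(v)=+1\}-n$, and estimate $\EE\big[\#\{v:\tilde s_2(v)=+1\}\big]$ by splitting the sum over $v$ into almost-positive vertices (each contributing $\ge\tfrac12+c'$) and the rest (each contributing $\ge \tfrac12-o(1)$, since a symmetric vertex is $+1$ on Day 2 with probability $\tfrac12\pm o(1)$ by the same Berry–Esseen/\cref{lem:Binomial=0} analysis — here one must be a little careful that ``the rest'' are not pathologically biased towards $-1$, but \cref{cor:4rv} controls the sign-probability of every vertex up to $O(p\Delta/\sqrt{pn})=o(p n^{3/2}/n)$ error). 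This yields $\EE\big[\sum_v\tilde s_2(v)\big]\ge 2\alpha_0 c' pn^{3/2}-o(pn^{3/2})\ge \alpha' pn^{3/2}$ for a suitable constant $\alpha'$.

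The last step is concentration. The random variable $\sum_v\tilde s_2(v)$ is a function of the edges of $G$, the morning opinion $r_0$, and the swing set; it is not a sum of independent terms, but changing the status of one edge, one vertex's $r_0$-value, or one swing vertex changes $\tilde s_2$ at only $O(np)$ vertices in the worst case — actually one should bound the expected number of affected vertices — so an Azuma/McDiarmid-type bounded-differences inequality, or alternatively a direct second-moment computation using that $\tilde s_2(u)$ and $\tilde s_2(v)$ are ``almost independent'' for most pairs $u,v$ (they become dependent only through shared neighbours, of which there are $O(np^2)$ in expectation), gives a standard-deviation bound $o(pn^{3/2})$. Then Chebyshev (or the concentration inequality) upgrades the expectation bound to ``w.h.p.\ $\sum_v\tilde s_2(v)\ge \tfrac{\alpha'}{2}pn^{3/2}$'', proving the lemma with $\alpha=\alpha'/2$.

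The main obstacle I expect is the first step — making the per-vertex estimate $\PP[\tilde s_2(v)=+1\mid v\text{ is }\gamma\text{-almost-positive}]\ge\tfrac12+c'$ genuinely quantitative and uniform. One has to track how the $c\sqrt n$ global swing vertices propagate: a neighbour $w$ of $v$ flips to $+1$ on Day 1 precisely when $w$'s own binomial sum $\sum_{x\in N(w)}r_0(x)$ lies in a window of width $\Theta(p^{3/2}n^{1/2}\cdot\sqrt n)=\Theta(p^{3/2}n)$... and the fraction of neighbours of $v$ for which this happens is a ratio of two binomial point-masses, which is where \cref{lem:binom_shift} and \cref{lem:Binomial=0} enter. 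Getting the error terms in these approximations to be genuinely smaller than the main term $\alpha_0 pn^{3/2}$ — in particular ensuring the condition $p\ge\lambda' n^{-3/5}\log n$ is exactly what is needed for the point-mass estimates $\Theta(1/\sqrt{np})$ to dominate the discretisation errors $O(1/(np))$ after multiplying through — is the delicate part, and is presumably why this lemma is stated and proved only after \cref{lem:day0_main}.
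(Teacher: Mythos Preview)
Your outline has the right ingredients but draws too weak a conclusion at the per-vertex step, and this forces you into an expectation-plus-concentration argument that would not go through as written.

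Your claim for a $\gamma$-almost-positive vertex $v$ is that $\PP[\tilde s_2(v)=+1]\ge \tfrac12+c'$. But your own heuristic --- that $\Theta(p^{3/2}n)$ neighbours of $v$ flip from $r_1=-1$ to $\tilde s_1=+1$ via swing neighbours, while the deficit $\sum_{w\in N(v)}r_1(w)$ is at most $\gamma p^{3/2}n$ in magnitude --- actually yields much more: once $\gamma$ is chosen small relative to the gain, the gain exceeds the deficit on a high-probability event, so $\tilde s_2(v)=+1$ with probability $1-O(1/n^2)$, not merely $\tfrac12+c'$. The paper makes this precise by letting $B_v$ be the event that fewer than $\delta p^{3/2}n$ \emph{unstable} vertices in $N_-(v)$ (those with $\sum_{u\in N(w)}r_0(u)=0$) have a swing neighbour, showing $\PP[B_v]\le 1/n^2$ by a Chernoff bound, and taking a union bound over all $v$. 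Intersecting with the w.h.p.\ event of \cref{lem:day0_main} (applied with $\gamma=\delta/2$), one gets that on this intersection \emph{every} $\gamma$-almost-positive vertex satisfies $\tilde s_2(v)=+1$, whence $\sum_v\tilde s_2(v)\ge 2\alpha pn^{3/2}$ immediately --- no first-moment computation, no concentration step.

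Two of your steps would in fact fail. First, the assertion that vertices which are \emph{not} $\gamma$-almost-positive satisfy $\PP[\tilde s_2(v)=+1]\ge\tfrac12-o(1)$ is false: conditioning on $\sum_{w\in N(v)}r_1(w)\le -\gamma p^{3/2}n$ biases $v$ towards $-1$, and since the swing boost is itself only $\Theta(p^{3/2}n)$, a vertex with deficit $-10\gamma p^{3/2}n$ will still have $\tilde s_2(v)=-1$ w.h.p. With only $\tfrac12+c'$ on the almost-positive side you cannot absorb a constant-order loss on the complement. Second, the concentration step is genuinely hard here: flipping one edge can change $\tilde s_2$ at up to $\Theta(np)$ vertices, so a bounded-differences inequality over the $\binom{n}{2}$ edge indicators gives a fluctuation of order $n\cdot np\gg pn^{3/2}$, and a second-moment bound on $\sum_v\tilde s_2(v)$ would require covariance control of $\tilde s_2(u),\tilde s_2(v)$ through their full second neighbourhoods --- essentially redoing the work of \cref{lem:day0_main} one level deeper. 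The paper's deterministic-on-a-good-event route sidesteps both issues entirely.
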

\begin{proof}
By exposing all the morning opinions $r_0(v)$, we may assume that $r_0$ is fixed.
We say that a vertex $w$ is \emph{unstable} if $\sum_{u\in N(w)}r_0(u)=0$. 
That is, a single swing neighbor is enough to ``change" the value of $r_1(w)$.
Given $v\in V(G)$, let $N_{-}(v)$ and $N_+(v)$ be the set of neighbors $u$ of $v$ with $r_0(u)=-1$ and $r_0(u)=+1$, respectively.
Let $B_v$ be the event that the number of unstable vertices in $N_-(v)$ with a swing neighbor is at most $\delta p^{3/2}n$, i.e., 
\[
\# \Big\{w\in N_-(v) \Big| \text{$w$ is unstable and has a swing neighbor}  \Big\} \leq \delta p^{3/2}n,
\]
where $\delta>0$ will be chosen later.
We claim that 
$\PP[B_v] \le 1/n^2$.  %$p \sqrt{n}$.
First, we expose the edges incident to $v$. Once all the neighbours of $v$ are revealed, we also expose the edges in $N(v)$. 
Then by the Chernoff bound (or \cref{lem:whp}), there exists a constant $C>0$ such that with probability at least $1-O(n^{-3})$ we have
\begin{enumerate}[(i)]
    \item both $|N_{-}(v)|$ and $|N_+(v)|$ are between $pn/2\pm n^{1/3}$.\label{it:np}
    \item each $w\in N(v)$ has at most $C\log n$ neighbors in $N(v)$.\label{it:codeg}
\end{enumerate}
We condition on the above events. 
For $w\in N(v)$, suppose that we expose all the edges incident to~$w$. Let %$a(w):=\sum_{u\in N(w)}r_0(u)$
$a(w):=\sum_{u\in N(w)\cap N(v)}r_0(u)$.
Then $w$ is unstable with probability $\PP[Y_1-Y_2+a(w)+r_0(v)= 0]$, where $Y_i\sim\Bi(n_i,p)$ and $n_1$ and~$n_2$ are vertices out of $\{v\}\cup N(v)$ with $r_0=+1$ and $-1$, respectively. 
As $|n_1-n_2|\leq \sqrt{n\log n}$, \cref{lem:binom_shift,lem:Binomial=0} give that~$w$ is unstable with probability $\Theta\big(\frac{1}{\sqrt{pn}}\big)$.
Now sample the swing vertices. Then
\begin{align*}
\PP\big[\text{$w$ has a swing neighbor}\big|\text{$w$ is unstable}\big]&=1-\binom{\lfloor n/2\rfloor-|N_{-}(w)|}{c\sqrt{n}}\bigg/ \binom{\lfloor n/2\rfloor}{c\sqrt{n}}\\
& \ge 1-\left( \frac{n/2-pn/3}{n/2}\right)^{c\sqrt{n}}\\
&\ge 1-e^{-(2c/3)p\sqrt{n}} \\
&\ge \min\left\{(c/3)p\sqrt{n},1/4\right\} \ge c'p\sqrt{n},
\end{align*}
where the first inequality follows from the fact that $|N_{-}(w)| \ge pn/3$ and $\binom{x}{k}\big/ \binom{y}{k} \le (x/y)^k$ for $k\le x \le y$ and the last uses the assumption $p\le \frac{\lambda}{\sqrt{n}}$ to obtain a constant $c'>0$.
Let $X_w$ be the indicator variable of the event that $w$ is unstable and has a swing neighbor. Then 
\[
\EE[X_w ] \ge c'p\sqrt{n}\cdot
\Theta\left(\frac{1}{\sqrt{pn}}\right)=\xi \sqrt{p}
\]
for some $\xi>0$. 
Moreover, $X_w$, $w\in N(v)$, are mutually independent given the edges in $v\cup N(v)$ are fixed. Indeed, suppose we expose all the swing vertices first and then expose the edges incident to each $w\in N(v)$ that are not contained in $N(v)$. Since each edge appears independently at random and is also independent from the choice of $r_0$ and the swing vertices, $X_w$'s are independent too. 
Let %$X\sim B(pn/2,\xi\sqrt{p})$
$X\sim B(pn/3,\xi\sqrt{p})$.
Then %$\sum_{w\in N(v)} X_w$ 
$\sum_{w\in N_{-}(v)} X_w$ stochastically dominates $X$, i.e., %$\PP\big[\sum_{w\in N(v)}X_w\leq x\big]\leq \PP[X\leq x]$ 
$\PP\big[\sum_{w\in N_{-}(v)}X_w\leq x\big]\leq \PP[X\leq x]$
for each $x\in\mathbb{R}$, since %$\sum_{w\in N(v)} X_w$ 
$\sum_{w\in N_{-}(v)} X_w$
is the sum of at least %$pn/2$ 
$pn/3$ independent Bernoulli variables with one-probability at least $\xi\sqrt{p}$. 
Then, by choosing $\delta=\xi/2$, we conclude that
\begin{align*}
\PP[B_v]&\leq \PP\left[\sum_{w\in N(v)}X_w\leq \delta p^{3/2}n\right] 
\le  O(n^{-3})+\PP[X\leq \delta p^{3/2}n] \\
         &\le O(n^{-3})+ e^{-\xi p^{3/2}n/4} \le 1/n^2.
\end{align*}
Indeed, the second inequality follows from conditioning on each edge instance on $v\cup N(v)$ that satisfies \ref{it:np} and \ref{it:codeg}. 
Then the Chernoff bound proves the next inequality.

\medskip

%\begin{proof}[Proof of \cref{lem:day2_bias}]
By the claim, with probability at least $1-O(1/n)$, no $B_v$ occurs. That is, for $c'=\xi/2$,
\begin{align}\label{eq:nobad_v}
    \# \left\{w\in N_-(v) ~\Bigg|~\sum_{u\in N(w)} r_0(u)=0, \enskip \text{$w$ has a swing neighbor}  \right\} \ge c' p^{3/2}n
\end{align}
holds for every $v\in V(G)$.
\cref{lem:day0_main} with the choice $\gamma=c'/2$ then implies that w.h.p.~there are at least $\frac{n}{2} + \alpha pn^{3/2}$ vertices~$v$ that satisfies both~\eqref{eq:nobad_v} and
\begin{align}\label{eq:star}
\sum_{w \in N(v)} r_1(w) > - \frac{c'}{2}p^{3/2}n.
\end{align}
For these vertices $v$, $\tilde{s}_2(v)=+1$, as all $w\in N_-(v)$ that is unstable and has a swing neighbor must turn to $\tilde{s}_1(w)=+1$. 
Thus, $\sum_{v}\tilde{s}_2(v) \ge \alpha pn^{3/2}$ w.h.p.
\end{proof}

\begin{remark}
The heuristic introduced in~\cite{BCOTT16} to support~\cref{conj:main} roughly predicts that the bias $|\sum_{v} s_t(v)|$ expands by a factor of $\sqrt{np}$ at each step. As $|\sum_{v} s_0(v)|=\Omega(\sqrt{n})$ with probability $1-\varepsilon$ as shown in~\cref{lem:deviation}, $|\sum_{v} s_2(v)|$ should be $\Omega(pn^{3/2})$ according to the prediction. 
This is precisely what \cref{lem:day2_bias} obtains and hence, we have just verified that the heuristic works up to Day 2 if $\lambda' n^{-3/5}\log n\leq p\leq \lambda n^{-1/2}$.
\end{remark}

\begin{proof}[Proof of~\cref{lem:day0_main}]
For a vertex $v$, let $A_v$ denote the event that $v$ is $\gamma$-almost-positive.
The plan is to use the second moment method by giving an upper bound for $\PP[A_u\cap A_v]$ and an lower bound for $\PP[A_u]$ and $\PP[A_v]$ for each pair of vertices $u$ and $v$.
The two vertices $u$ and $v$ will be fixed until these computations are carried out.
%We temporarily assume that the morning opinions $r_0$ of both $u$ and $v$ are $+1$.
%The other cases will be considered after careful computations on this case are carried out.
%The cases for the other values of $r_0(v)$ and $r_0(v)$ can be analyzed similarly and we omit the proof.

\medskip

We condition on the following high probability events.
In fact, the events hold with probability $1-O(n^{-2})$.
The constant $C>0$ below is taken large enough to apply~\cref{lem:whp} repeatedly.
\begin{enumerate}[(i)]
    \item\label{it:N(u)} First expose all the edges incident to $u$ and $v$. Then both $\deg(u)$ and $\deg(v)$ are in the interval $[np - C\sqrt{np}\log n, np + C\sqrt{np}\log n]$.
    \item\label{it:N(u,v)} The number of vertices in $N(u) \cap N(v)$ is at most $C\log n$. 
    \item\label{it:+-1} Expose $r_0$ in $\Gamma := (N(u) \cup N(v)) \setminus \{u,v\}$. The difference between the number of $\pm 1$'s in $U := N(u) \setminus (N(v) \cup \{v\})$ and in $V := N(v) \setminus (N(u) \cup \{u\})$ in the morning is at most $C\sqrt{np}\log n$. 
    \item\label{it:m1m2} Let $U_+$ and $U_-$ be the set of vertices in $U$ with the morning opinion $+1$ and $-1$, respectively, and let $m_1:=|U_+|$ and $m_2:=|U_-|$. Then both $m_1$ and $m_2$ are $pn/2\pm C\sqrt{np}\log n$.

    \item\label{it:e[N(u)]} Expose the edges inside $\Gamma$. The number of edges in each of $N(u)$ and $N(v)$ is at most $2n^2p^3$.%, \log n\}$.
    \item\label{it:a(w)}  For $w \in \Gamma$, let $a(w)$ be the sum $\sum_{x \in N(w) \cap \Gamma} r_0(x)$. Then $|a(w)| \le C\log n$ and moreover, $|\sum_{w \in \Gamma} a(w)| \le Cnp^{3/2}\log n$.
    %Condition on the fixed value of $a(w)$ 
\end{enumerate}
Indeed, \ref{it:N(u)}--\ref{it:e[N(u)]} are standard applications of the Chernoff bound and \cref{lem:whp}.
%In particular, the logarithm in~\ref{it:N(u,v)} is included for the case $np^2=O(1)$, treated by~\cref{lem:deviation}(ii).
It hence remains to check \ref{it:a(w)}. Let $\Gamma_+$ and $\Gamma_-$ be the vertices in $\Gamma$ with $r_0=+1$ and $-1$, respectively. Indeed, $|\Gamma_+|$ and $|\Gamma_-|$ are $(1+o(1))np$.
Given all the conditions (i)--(v), each $a(w)$, $w\in \Gamma_+$ is identically distributed with $X_w-Y_w$, where $X_w\sim \Bi(|\Gamma_+|-1,p)$ and $Y_w\sim \Bi(|\Gamma_-|,p)$ are independent.
If $p^2n\geq 1$, then by~\cref{lem:whp}(i), $X_w=p|\Gamma_+|\pm O(\sqrt{p|\Gamma_+|}\log|\Gamma_+|)=p^2n\pm O(p\sqrt{n}\log n)$ with probability $1-1/n^2$ and use the fact $p\sqrt{n}\leq \lambda$. Otherwise, we use \cref{lem:whp}(ii).  
The same bound also holds for $Y_w$, which proves the estimate for $|a(w)|$. The proof for the case $w\in \Gamma_-$ is almost identical.

By double counting, $\sum_{w\in \Gamma} a(w)=\sum_{ww'\in E(G[\Gamma])}(r_0(w)+r_0(w'))=2\big(e(G[\Gamma_+])-e(G[\Gamma_-])\big)$. 
This is identically distributed with $X-Y$, where $X\sim \Bi(\binom{|\Gamma_+|}{2},p)$ and $Y\sim \Bi(\binom{|\Gamma_-|}{2},p)$ are independent.
Again by~\cref{lem:whp}, we have the estimate $\frac{1}{2}n^2p^3\pm O(np^{3/2}\log n)$ for both $X$ and $Y$, which completes the proof of~\ref{it:a(w)}.

\medskip

Let $G_{uv}$ be the subgraph of $G(n,p)$ induced on $\Gamma\cup\{u,v\}$. What we have exposed so far in $G(n,p)$ precisely determines what $G_{uv}$ is.
Denote by $\mathcal{E}_{uv}$ the high probability event that all the conditions (i)--(vi) hold.
In other words,
$\mathcal{E}_{uv}$ is the collection of the pairs $(G_{uv},r_0|_{\Gamma})$ of graph instances $G_{uv}$ and values of $r_0$ in $\Gamma$ that satisfy (i)--(vi).

%With the remaining $\pm 1$'s, assign $\pm 1$ arbitrarily the vertices outside of $\{u,v\} \cup N(u) \cup N(v)$, 
Now expose $r_0$ for the remaining vertices in $V(G)\setminus\Gamma$.% Throughout the proof, we fix the choice of~$r_0$.
~We first analyze the case $r_0(u)=r_0(v)=+1$.
Let $n_1$ and $n_2$ denote the numbers of $\pm 1$'s outside $\{u,v\} \cup\Gamma$. That is, $n_1=\lceil n/2\rceil- |\Gamma_+|-2$ and $n_2=\lfloor n/2\rfloor - |\Gamma_-|$.
By~\ref{it:N(u)}, \ref{it:+-1} and \ref{it:m1m2}, both $n_1$ and $n_2$ lies between $\frac{n}{2} - np - C\sqrt{np}\log n$ and $ \frac{n}{2} - np + C\sqrt{np}\log n$. In particular, $|n_1-n_2|\leq 2C\sqrt{np}\log n$. 

For simplicity, in the proofs of~Claims~\ref{claim:1st_moment} and \ref{claim:2nd_moment}, we omit the notation that indicates conditioning on fixed $(G_{uv},r_0)$ such that $(G_{uv},r_0|_{\Gamma})\in \mathcal{E}_{uv}$ and $r_0(u)=r_0(v)=+1$.
In particular, the mean and the variance throughout Claims~\ref{claim:1st_moment} and \ref{claim:2nd_moment} are functions of $G_{uv}$ and $r_0$.

\begin{claim}\label{claim:1st_moment}
$\big|\EE\big[\sum_{w\in U} r_1(w)\big]\big| = O(\sqrt{np})$.
\end{claim}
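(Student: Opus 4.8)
We want to estimate the expectation of $\sum_{w \in U} r_1(w)$, where $U = N(u)\setminus (N(v)\cup\{v\})$, conditioned on a fixed pair $(G_{uv}, r_0)$ in the good event $\mathcal{E}_{uv}$ with $r_0(u)=r_0(v)=+1$. For each $w \in U$, recall $r_1(w) = \sgn\!\left(\sum_{x\sim w} r_0(x)\right)$ when the sum is nonzero and $r_1(w)=r_0(w)$ otherwise. Writing $\sum_{x \sim w} r_0(x) = (Z_w - W_w) + a(w)$, where $a(w) = \sum_{x \in N(w)\cap\Gamma} r_0(x)$ is already determined by the conditioning, and $Z_w \sim \Bi(n_1', p)$, $W_w \sim \Bi(n_2', p)$ count the $+1$- and $-1$-neighbours of $w$ outside $\Gamma$ (with $n_1', n_2'$ equal to $n_1, n_2$ up to the $O(1)$ correction for whether $w$ itself is $\pm1$), we have $\EE[r_1(w)] = \PP[Z_w - W_w > -a(w)] - \PP[Z_w - W_w < -a(w)] = 1 - 2\PP[Z_w - W_w \le -a(w)] \pm \PP[Z_w-W_w = -a(w)]$, with the correction term absorbing the "tie goes to $r_0(w)$" case.

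**Main computation.** The plan is to show $\EE[r_1(w)] = O(|a(w)|\cdot p^{1/2}) \pm O((np)^{-1/2})$ for each $w$, and then sum. Since $Z_w - W_w$ has mean $(n_1'-n_2')p$, which is $O(\sqrt{np}\log n \cdot p) = o(1)$ by \ref{it:N(u)}--\ref{it:m1m2} and $p \le \lambda n^{-1/2}$, and standard deviation $\sigma = \Theta(\sqrt{np})$, Lemma~\ref{lem:Binomial=0} (or the Berry--Esseen bound, Theorem~\ref{berry}, together with Lemma~\ref{lem:binom_shift} controlling the local probabilities) gives $\PP[Z_w - W_w \le -a(w)] = \tfrac12 \pm O\!\left(\tfrac{1+|a(w)|}{\sqrt{np}}\right)$. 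Hence $|\EE[r_1(w)]| = O\!\left(\tfrac{1+|a(w)|}{\sqrt{np}}\right)$. Summing over $w \in U$ and using $|U| = O(np)$ from \ref{it:N(u)} together with $\sum_{w\in\Gamma}|a(w)| \le \sum_w O(\log n) = O(np\log n)$ — actually I would use the sharper bound $|\sum_{w\in\Gamma} a(w)| \le Cnp^{3/2}\log n$ from \ref{it:a(w)} only if signed cancellation is available, but here each $\EE[r_1(w)]$ enters with the same sign structure, so I must be careful. Let me instead bound crudely: $\sum_{w\in U}|\EE[r_1(w)]| = O\!\left(\tfrac{|U| + \sum_w |a(w)|}{\sqrt{np}}\right)$. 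With $\sum_w|a(w)| \le |U|\cdot C\log n = O(np\log n)$ this only gives $O(\sqrt{np}\log n)$, which is too weak by a $\log n$ factor.

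**Resolving the logarithmic loss.** The fix is that the contribution of $a(w)$ should be handled by its \emph{signed} sum, not its absolute values. Expanding more carefully, $\EE[r_1(w)] = 1 - 2\PP[Z_w - W_w \le 0] + 2\big(\PP[Z_w - W_w \le 0] - \PP[Z_w - W_w \le -a(w)]\big)$; the first part is $O((np)^{-1/2})$ uniformly, contributing $O(|U|/\sqrt{np}) = O(\sqrt{np})$ in total, and the second part equals $\pm 2\sum_{j}\PP[Z_w-W_w = j]$ over the $|a(w)|$ integers between $0$ and $-a(w)$, each term being $O((np)^{-1/2})$ by Lemma~\ref{lem:binom_shift}; but crucially the \emph{sign} of this second part is the sign of $a(w)$, so summing over $w$ it telescopes against $|\sum_w a(w)| = O(np^{3/2}\log n)$, giving total contribution $O\!\left(\tfrac{np^{3/2}\log n}{\sqrt{np}}\right) = O(\sqrt{np}\cdot\sqrt{p}\log n) = O(\sqrt{np})$ since $p = o(1/\log^2 n)$ is forced by $p \le \lambda n^{-1/2}$. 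Summing the two parts gives $|\EE[\sum_{w\in U} r_1(w)]| = O(\sqrt{np})$, as claimed.

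**Main obstacle.** The delicate point is precisely this cancellation: the naive union-type bound loses a $\log n$ factor, and one must exploit that the first-order dependence of $\EE[r_1(w)]$ on the conditioning data is \emph{linear} in $a(w)$ (with a controlled coefficient), so that the global bound $|\sum_w a(w)| \le Cnp^{3/2}\log n$ from \ref{it:a(w)} — itself obtained via double-counting $e(G[\Gamma_+]) - e(G[\Gamma_-])$ — can be invoked. I would make the linearization rigorous by writing $\PP[Z_w - W_w \le 0] - \PP[Z_w - W_w \le -a(w)]$ exactly as a signed sum of $|a(w)|$ point probabilities, each $\Theta((np)^{-1/2})$ and all of the same sign for a fixed $w$, and then bounding $\sum_{w\in U}$ of these by $(\max_j \PP[\,\cdot=j\,]) \cdot \sum_w |a(w)|$ only as a fallback, but preferring the signed estimate; handling the small discrepancy between the exact "number of integers in an interval" and $a(w)$ itself contributes only an extra $O(|U|/\sqrt{np}) = O(\sqrt{np})$, which is harmless.
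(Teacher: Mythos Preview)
Your approach is essentially the paper's: both linearize $\EE[r_1(w)]$ in $a(w)$ using Lemma~\ref{lem:binom_shift}, then exploit the signed bound $\big|\sum_{w} a(w)\big|=O(np^{3/2}\log n)$ from (vi) for the linear part while the constant ``baseline'' term contributes $O(\sqrt{np})$ directly. The paper organizes the computation slightly differently---it splits $U=U_+\cup U_-$ to track the tie-breaking rule exactly and writes the leading term as $np(p_0+p_1-1)$ with $p_k=\PP[Y_1+k\ge Y_2]$---but the substance is the same.

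One caution on presentation: the line ``summing over $w$ it telescopes against $|\sum_w a(w)|$'' is not valid as written, since knowing only the \emph{sign} of each second-part term together with a per-term upper bound does not let you replace $\sum_w|a(w)|$ by $\big|\sum_w a(w)\big|$. What is actually needed (and what both the paper and your own final paragraph correctly identify) is the sharper statement that the second part equals $2a(w)\,q_0 \pm O\big(|a(w)|\log n/(np)\big)$ with a \emph{common} coefficient $q_0=\PP[Z_w-W_w=0]$ independent of $w$; then the main terms genuinely sum to $2q_0\sum_w a(w)$ and the residual errors to $O(\log^2 n)=O(\sqrt{np})$. You should also account for the $+1$ contribution from $u\in N(w)$ (since $r_0(u)=+1$), which shifts thresholds by one but is absorbed into the baseline term.
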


\begin{proof}[Proof of the claim]
There is subtle asymmetry between $U_+$ and $U_-$: the vertices $w \in U_+$ turns to $+1$ after Day~1 if $\sum_{x \in N(w)} r_0(x) \ge 0$, whereas $w \in U_-$ turns to $+1$ after Day 1 if $\sum_{x \in N(w)} r_0(x) \ge 1$. 
The random variable $\sum_{w \in U_+} r_1(w)$ is identically distributed with the random variable $\sum_{w \in U_+} X_w$, where $X_w$'s are independently distributed as follows: $X_w$ takes $+1$ with probability $\PP[Y_1 + 1 + a(w) \ge  Y_2]$ and $-1$ otherwise, where $Y_i\sim \Bin(n_i,p)$ are independent binomial random variables. 
Analogously, for $w\in U_-$, $X_w$ takes $+1$ with probability $\PP[Y_1+a(w)\geq Y_2]$ and $-1$ otherwise. 

We estimate $\PP[Y_1 + a \ge  Y_2]$ for integers $a$ such that $|a|=O(\log n)$. Observe first that
\begin{align*}
    \PP[Y_1 + a \ge  Y_2] &= %\sum_{k=-a}^{\infty} \PP[Z_1=Z_2+k] = 
    \PP[Y_1\ge Y_2] +\sum_{j=1}^a\PP[Y_1+j=Y_2] ~~\text{ if $a>0$ and }\\
    \PP[Y_1+a \ge  Y_2] &= \PP[Y_1\ge Y_2] -\sum_{j=0}^{-a-1}\PP[Y_1-j=Y_2] ~~\text{ if $a<0$. }
\end{align*}
%Otherwise, if $a<0$, then
%$     \PP[Z_1 + a \ge  Z_2] = \PP[Z_1\ge Z_2] -\sum_{j=0}^{-a-1}\PP[Z_1+j=Z_2]$.
\cref{lem:binom_shift} then allows us to approximate $\PP[Y_1 + a \ge  Y_2]$ by $\PP[Y_1\ge Y_2] + a\PP[Y_1=Y_2]$.
Namely, if $0\le j\le C \log n$, then $|\PP[Y_1+j=Y_2]-\PP[Y_1=Y_2]| =O\big(\frac{\log n}{np}\big)$ and hence, for $|a| =O(\log n)$,
\begin{align}\label{eq:Z+a}
    \PP[Y_1 + a \ge  Y_2] = \PP[Y_1\ge Y_2] + a \left(\PP[Y_1=Y_2] \pm O\left(\frac{\log n}{np}\right)\right).
\end{align}
Almost the same argument also proves
\begin{align}\label{eq:Z+a+1}
    \PP[Y_1 + a+1 \ge  Y_2] = \PP[Y_1+1\ge Y_2] + a\left( \PP[Y_1=Y_2] \pm O\left(\frac{\log n}{np}\right)\right).
\end{align}
For brevity, let $q = \PP[Y_1 =Y_2]$ and $p_k = \PP[Y_1 + k \ge Y_2]$. By~\cref{lem:Binomial=0}, $q=\Theta\big(\frac{1}{\sqrt{np}}\big)$.
%, which is asymptotically larger than $\log n/(np)$ if $p\gg \log^2 n/n$; we may thus write $q\pm \log n/n = (1+o(1))q$.
Let $\mu_{w}$ be the expectation of the random variable $r_1(w)$ conditioned on (i)--(vi) and $r_0(u)=r_0(v)=+1$.
Then by using \eqref{eq:Z+a+1} and~\eqref{eq:Z+a} for $w\in U_+$ and $w\in U_-$, respectively,
\begin{align*}
    \mu_w &= 2p_{a(w)+1}-1 = 2p_1-1 +2a(w)\left(q\pm O\left(\frac{\log n}{np}\right)\right)~\text{ for }w\in U_+~\text{ and}\\
    \mu_w &= 2p_{a(w)}-1 = 2p_0-1 +2a(w)\left(q\pm O\left(\frac{\log n}{np}\right)\right)~\text{ for }w\in U_-.
\end{align*}
Let $\mu_{\pp}:=\sum_{w\in U}\mu_w$ to indicate that it is conditioned on $r_0(v)=r_0(u)=+1$. Then
\begin{align}\label{eq:mu}
    &\nonumber\mu_{\pp} = m_1(2p_1 - 1) + m_2(2p_0 - 1) + 2\sum_{w \in U} a(w)\left(q\pm O\bigg(\frac{\log n}{np}\bigg)\right)\\
    &= np(p_0+p_1 -1) + \bigg(p_1-\frac{1}{2}\bigg)(2m_1-np)+\bigg(p_0-\frac{1}{2}\bigg)(2m_2-np)
    + 2\sum_{w \in U} a(w)\left(q\pm O\bigg(\frac{\log n}{np}\bigg)\right) \nonumber \\
    &= np(p_0 + p_1 - 1) \pm  O\left(\log^2 n\right).
    %+ p\sqrt{n} \log n\right).
\end{align}
Indeed, $m_1$ and $m_2$ are $np/2 \pm O(\sqrt{np}\log n)$ by~\ref{it:m1m2} and  both $p_0$ and $p_1$ are $\frac{1}{2}\pm O\big(\frac{1+|n_1-n_2|p}{\sqrt{np}}\big)$. As $|n_1-n_2|=O(\sqrt{np}\log n)$, $p_0$ and $p_1$ are $\frac{1}{2} \pm O\big(\frac{1}{\sqrt{np}}\big)$ by ~\cref{lem:Binomial=0}.
Thus, both $(p_1-1/2)(2m_1-np)$ and $(p_0-1/2)(2m_2-np)$ are $O(\log n)$.
We also use~\ref{it:a(w)} and the fact $q=\Theta\big(\frac{1}{\sqrt{np}}\big)$ to obtain the bound $q\left|\sum_{w\in U}a(w)\right|=O(p\sqrt{n}\log n)$. Moreover, $|a(w)|\leq\log n$ by~\ref{it:a(w)}, so $\frac{\log n}{np}\sum_{w\in U}|a(w)|=O(\log^2 n)$.
Overall, $|\mu_{\pp}|=O(\sqrt{np})$.
\end{proof}

\begin{claim}\label{claim:2nd_moment}
$\mathrm{Var}\big(\sum_{w\in U} r_1(w)\big) = np\pm \sqrt{np}\log n$.
\end{claim}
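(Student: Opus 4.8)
The plan is to exploit the conditional independence that is already in place. Recall that, conditioned on a fixed pair $(G_{uv},r_0)$ with $(G_{uv},r_0|_{\Gamma})\in\mathcal{E}_{uv}$ and $r_0(u)=r_0(v)=+1$, the variable $\sum_{w\in U}r_1(w)$ is distributed as $\sum_{w\in U}X_w$, where the $X_w$ are \emph{independent} $\pm1$-valued random variables: $X_w$ equals $+1$ with probability $p_{a(w)+1}$ when $w\in U_+$ and with probability $p_{a(w)}$ when $w\in U_-$, in the notation $p_k=\PP[Y_1+k\ge Y_2]$ of the proof of Claim~\ref{claim:1st_moment}. Since a $\pm1$ variable with mean $\mu_w$ has variance $1-\mu_w^2$, independence gives
\[
\mathrm{Var}\Big(\sum_{w\in U}r_1(w)\Big)=\sum_{w\in U}\mathrm{Var}(X_w)=\sum_{w\in U}\big(1-\mu_w^2\big)=|U|-\sum_{w\in U}\mu_w^2,
\]
where each $\mu_w=\EE[X_w]$ is exactly the quantity estimated in the proof of Claim~\ref{claim:1st_moment}.

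Next I would reuse those estimates to show that each $\mu_w$ is negligible. Combining \eqref{eq:Z+a} and \eqref{eq:Z+a+1} with \cref{lem:Binomial=0} (which yields $p_0=p_1=\tfrac12\pm O(1/\sqrt{np})$ and $q=\Theta(1/\sqrt{np})$) and the bound $|a(w)|\le C\log n$ from \ref{it:a(w)}, one gets $p_{a(w)},\,p_{a(w)+1}=\tfrac12\pm O(\log n/\sqrt{np})$, hence $|\mu_w|=O(\log n/\sqrt{np})$ for every $w\in U$. Squaring and summing over the $|U|=m_1+m_2=np\pm O(\sqrt{np}\log n)$ vertices of $U$ gives
\[
\sum_{w\in U}\mu_w^2=|U|\cdot O\!\Big(\frac{\log^2 n}{np}\Big)=O(\log^2 n).
\]

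Finally, plugging this and $|U|=np\pm O(\sqrt{np}\log n)$ (from \ref{it:m1m2}) into the first display yields $\mathrm{Var}(\sum_{w\in U}r_1(w))=np\pm O(\sqrt{np}\log n)-O(\log^2 n)$. Since $p\ge\lambda'n^{-3/5}\log n$ forces $np\ge\lambda' n^{2/5}\log n\gg\log^2 n$, the $O(\log^2 n)$ term is absorbed into $O(\sqrt{np}\log n)$, which is the claimed estimate. There is no genuine obstacle in this claim: the only real point is recognizing that conditional independence makes the variance the sum of the per-vertex variances $1-\mu_w^2$, after which the per-vertex mean bounds borrowed verbatim from the proof of Claim~\ref{claim:1st_moment} finish the job; the one thing to check is that the sparsity hypothesis is still strong enough ($np\gg\log^2 n$) for the second-order term to be swallowed by the stated error.
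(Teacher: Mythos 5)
Your proof is correct and follows essentially the same route as the paper: both use the conditional independence of the $r_1(w)$ to write the variance as $\sum_{w\in U}(1-\mu_w^2)$, bound the per-vertex means via the estimates on $p_0,p_1,q,a(w)$ from Claim~\ref{claim:1st_moment}, and absorb the $O(\log^2 n)$ correction using $np\gg\log^2 n$. The only (immaterial) difference is that you bound $|\mu_w|$ wholesale by $O(\log n/\sqrt{np})$, whereas the paper expands $\mu_w^2$ around $(2p_1-1)^2$ term by term.
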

\begin{proof}[Proof of the claim]
%We now estimate the variance of $r_1(w)$. 
For $w\in U_+$,
\begin{align*}
    \mathrm{Var}(r_1(w)) &= 1-\mu_w^2 = 
    1 - \big(2p_1 -1 + 2a(w)(1+o(1))q \big)^2 \\
    &= 1 - (2p_1 - 1)^2 + 4a(w)(2p_1 - 1)(1+o(1))q +4a(w)^2(1+o(1))q^2\\
    &= 1-(2p_1-1)^2 \pm O\left(\frac{\log^2 n}{np}\right)
    %&= 1-(2p_1-1)^2 +4a(w)(2p_1-1)(1+o(1))q,
\end{align*}
where the last equality follows from~\ref{it:a(w)}, $p_1=1/2\pm O\big(\frac{1}{\sqrt{np}}\big)$ and $q=\Theta\big(\frac{1}{\sqrt{np}}\big)$. % with $p\gg n^{-2/3}$. 
For $w\in U_-$, an analogous bound holds with $p_0$ instead of $p_1$.
Let $\sigma_{\pp}^2$ be the variance of $\sum_{w\in U}r_1(w)$, conditioned on the fixed $G_{uv}$ and $r_0$. By
\begin{align*}
    m_1(1-(2p_1-1)^2)  %\frac{np}{2}\left(1-(2p_1-1)^2\right)+ \left(m_1-\frac{np}{2}\right)\left(1-(2p_1-1)^2\right)
    =\frac{np}{2}\left(1-(2p_1-1)^2\right) \pm \sqrt{np}\log n
\end{align*}
and a similar bound for $m_2(1-(2p_0-1)^2)$,
we obtain
\begin{align*}
    %\mathrm{Var}\left(\sum_{w \in U} r_1(w)\right) 
    \sigma_{\pp}^2 &= m_1(1 - (2p_1 - 1)^2) + m_2(1 - (2p_0 - 1)^2) \pm O(\log^2 n)\\
    &= \frac{np}{2}\left(2 - (2p_1 - 1)^2 - (2p_0 - 1)^2\right) \pm O\left(\sqrt{np}\log n\right)\\
    &=np\pm O(\sqrt{np}\log n),
\end{align*}
as $\log^2 n\ll \sqrt{np}\log n$ as $p\gg \log^2 n/n$.
Thus, $\sigma_{\pp}=\sqrt{np}\pm O(\log n)$.
\end{proof}
We now turn to analyze the other cases with different signs of $r_0$ on $u$ and $v$.
Recall that the high probability event $\mathcal{E}_{uv}$ consists of pairs $(G_{uv},r_0|_{\Gamma})$ of the graph $G_{uv}$ on $\Gamma\cup\{u,v\}$ and $r_0$ restricted on $\Gamma$ that satisfy (i)--(vi). 
For simplicity, we write $G_{uv}^*$ for the pair $(G_{uv},r_0|_{\Gamma})$.

For fixed $G_{uv}$ and $r_0$ such that $G_{uv}^*\in\mathcal{E}_{uv}$ and $r_0(u)=r_0(v)=+1$, 
suppose that only $r_0(v)$ changes from $+1$ to $-1$ while everything else remains the same.
Then, in the proofs of Claims~\ref{claim:1st_moment} and~\ref{claim:2nd_moment}, $n_1$ and $n_2$ are very slightly changed: $n_1$ increases by $1$ and $n_2$ decreases by $1$. 
However, the arguments throughout the proofs remain exactly the same. 
The conditional mean, denoted by $\mu_{\pmi}$, in this case can differ from $\mu_{\pp}$ only very slightly.
The only difference is the values of $n_1$ and~$n_2$, which makes $p_0$, $p_1$ and $q$ differ by $O(p)$ by \cref{cor:4rv}.
Including this error term in~\eqref{eq:mu} gives $\mu_{\pp}=\mu_{\pmi}\pm O(p^2n + \log^2 n)$.
As $p^2n\ll \log^2 n$, we have $\mu_{\pp}=\mu_{\pmi}\pm O(\log^2 n)$.

If $r_0(u)=-1$ and $r_0(v)=+1$ in the same setting, the conditional expectation and the conditional variance, denoted by $\mu_{\mpl}$ and $\sigma_{\mpl}$, respectively, are estimated by the same method with slightly different parameters.
More precisely, $n_1'=\lceil n/2 \rceil-|\Gamma^+|-1$ and $n_2'=\lceil n/2 \rceil-|\Gamma^-|-1$.
Let $p_0' =\PP[Y_1'-1\geq Y_2']$ and $p_1'=\PP[Y_1'-2\geq Y_1']$, where $Y_i'\sim\Bi(n_i,p)$, $i=1,2$.
Similarly to~\eqref{eq:mu}, one then obtains the bound
\begin{align*}
    \mu_{\mpl} = np(p_0' + p_1' - 1) \pm  O\left(\log^2 n\right).
\end{align*}
In particular, $\mu_{\mpl}=O(\sqrt{np})$ and $\sigma_{\mpl}=\sqrt{np}\pm O(\log n)$.
Indeed, these bounds remain the same if $r_0(u)=r_0(v)=-1$ and the only difference from the case $r_0(u)=-1$ and $r_0(v)=+1$ is the values of $n_1$ and $n_2$,
so $\mu_{\mpl}=\mu_{\mm}\pm O(\log^2 n)$.
Let $\mu_-:=\frac{1}{2}(\mu_{\mpl}+\mu_{\mm})$. 
Then we also have that $\mu_{\mpl}$ and $\mu_{\mm}$ are $\mu_-\pm O( \log^2 n)$.
Overall, the bound $np\pm O(\sqrt{np}\log n)$ is universal for the variance obtained in all the four cases.
To summarize, we so far have
\begin{align}\label{eq:mu_sigma}
    |\mu_{\pmi} -\mu_{\pp}|=O(\log^2 n), ~|\mu_{\mpl}-\mu_{\mm}|=O(\log^2 n) ~\text{ and }~ \sigma = \sqrt{np}\pm O(\log n),
\end{align}
where $\sigma$ can be $\sigma_{\pp},\sigma_{\pmi},\sigma_{\mpl}$ or $\sigma_{\mm}$.
Despite the estimates above, we only obtained the bound $O(\sqrt{np})$ for $|\mu_+ +\mu_-|$ by \cref{claim:1st_moment}. This is not enough for our purpose, which motivates the following claim.

\begin{claim}\label{claim:mu+-}
% $\big|\EE\big[\sum_{w\in U}r_1(w)\mathbf{1}_{\mathcal{E}_{uv}}\big]\big|=O(p^2n).$
For every $G_{uv}$ and $r_0$ with $G_{uv}^*\in\mathcal{E}_{uv}$, $\big|\mu_+ +\mu_-\big|=O(\log^2n)$.
\end{claim}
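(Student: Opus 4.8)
The plan is to use the fact that $\mu_+$ and $\mu_-$ govern the two parallel outcomes of Day~1 from the \emph{same} graph $G_{uv}$ under a pair of morning opinions that differ only in the two coordinates $r_0(u),r_0(v)$, and that these outcomes can in fact be coupled to the \emph{same} underlying opinion. Concretely, from Claim~\ref{claim:1st_moment} and its variants we have $\mu_{\pp} = np(p_0+p_1-1)\pm O(\log^2 n)$ and $\mu_{\mpl} = np(p_0'+p_1'-1)\pm O(\log^2 n)$ (together with $\mu_{\pmi}=\mu_{\pp}\pm O(\log^2 n)$ and $\mu_{\mm}=\mu_{\mpl}\pm O(\log^2 n)$ from~\eqref{eq:mu_sigma}), so it suffices to show $np\big|(p_0+p_1-1)+(p_0'+p_1'-1)\big|=O(\log^2 n)$, i.e.\ $\big|(p_0+p_1) + (p_0'+p_1') - 2\big| = O(\log^2 n / (np))$. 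The key observation is that $p_0+p_1-1$ and $p_0'+p_1'-1$ have opposite sign at leading order: the four probabilities $p_0,p_1$ (resp.\ $p_0',p_1'$) are thresholds $\PP[Y_1 + k \ge Y_2]$ for $k\in\{0,1\}$ (resp.\ $\PP[Y_1'+k'\ge Y_2']$ for $k'\in\{-1,-2\}$), where $(n_1,n_2)$ and $(n_1',n_2')$ are \emph{reflections} of each other: passing $r_0(v)$ (and $r_0(u)$) from $+1$ to $-1$ moves the positive/negative population counts symmetrically, so $Y_1-Y_2$ and $Y_1'-Y_2'$ have means of opposite sign up to $O(p)$ and the two shift amounts $\{+1,0\}$ and $\{-1,-2\}$ are likewise reflections of one another about the point $-\tfrac12$.

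In detail, I would first record that $(n_1,n_2) = (\lceil n/2\rceil - |\Gamma_+| - 2,\ \lfloor n/2\rfloor - |\Gamma_-|)$ while $(n_1',n_2') = (\lceil n/2\rceil - |\Gamma_+| - 1,\ \lceil n/2\rceil - |\Gamma_-| - 1)$, so that $n_1' = n_2 \pm O(1)$ and $n_2' = n_1 \pm O(1)$. Using Corollary~\ref{cor:4rv} (with $\Delta = O(1)$ and $n_0 = \Theta(n)$, incurring an error $O(p/\sqrt{np}) = O(1/(n\sqrt p))$, which after multiplication by $np$ is $O(\sqrt{np}) = O(\log^2 n)$ since $p \le \lambda n^{-1/2}$), I can replace $\PP[Y_1'+k' \ge Y_2']$ by $\PP[\widetilde Y_1 + k' \ge \widetilde Y_2]$ where $\widetilde Y_1 \sim \Bi(n_2,p)$ and $\widetilde Y_2 \sim \Bi(n_1,p)$; equivalently, by symmetry of the roles, $p_0' = \PP[Y_2 - 1 \ge Y_1] = \PP[Y_1 \le Y_2 - 1] = \PP[Y_1 + 1 \le Y_2]$ up to $O(1/(n\sqrt p))$, and similarly $p_1' = \PP[Y_1 + 2 \le Y_2]$ up to the same error. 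Now the elementary identity $\PP[Y_1 + k \ge Y_2] + \PP[Y_1 + k \le Y_2] = 1 + \PP[Y_1 + k = Y_2]$ gives
\begin{align*}
(p_0 + p_1) + (p_0' + p_1') &= \big(\PP[Y_1 \ge Y_2] + \PP[Y_1 + 1 \le Y_2]\big) + \big(\PP[Y_1 + 1 \ge Y_2] + \PP[Y_1 + 2 \le Y_2]\big) \pm O\!\left(\tfrac{1}{n\sqrt p}\right)\\
&= 2 - \PP[Y_1 = Y_2 - 1] - \PP[Y_1 = Y_2 - 2] \pm O\!\left(\tfrac{1}{n\sqrt p}\right).
\end{align*}
By Lemma~\ref{lem:Binomial=0} (applicable since $|n_1 - n_2| = O(\sqrt{np}\log n) \ll n$ and $\tfrac1n \ll p \ll \tfrac1{\log n}$) each point mass $\PP[Y_1 = Y_2 - j]$ is $O(1/\sqrt{np})$, so $\big|(p_0+p_1) + (p_0'+p_1') - 2\big| = O(1/\sqrt{np})$, whence $np\big|(p_0+p_1-1) + (p_0'+p_1'-1)\big| = O(\sqrt{np}) = O(\log^2 n)$ using $p \le \lambda n^{-1/2}$. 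Combining with $\mu_{\pp} = np(p_0+p_1-1)\pm O(\log^2 n)$, $\mu_{\mpl} = np(p_0'+p_1'-1)\pm O(\log^2 n)$, and averaging over the sign of the remaining coordinate via $|\mu_{\pmi}-\mu_{\pp}|, |\mu_{\mpl}-\mu_{\mm}| = O(\log^2 n)$ yields $|\mu_+ + \mu_-| = O(\log^2 n)$.

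The main obstacle I anticipate is bookkeeping the reflection correctly: one must be careful that the asymmetric threshold $\ge 1$ versus $\ge 0$ between $U_+$ and $U_-$ (noted in the proof of Claim~\ref{claim:1st_moment}) pairs up exactly with the $\ge -1$ versus $\ge -2$ thresholds in the $-+$ case so that the point masses appearing are genuinely $\PP[Y_1 = Y_2 - 1]$ and $\PP[Y_1 = Y_2 - 2]$ rather than something that fails to cancel at order $\Theta(\sqrt{np})$ — i.e.\ the cancellation of the two $O(\sqrt{np})$-sized quantities $\mu_+$ and $\mu_-$ is not automatic and hinges on this exact matching of shifts under the reflection $(n_1,n_2)\leftrightarrow(n_2,n_1)$. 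The rest (invoking Corollary~\ref{cor:4rv} to swap binomial parameters and Lemma~\ref{lem:Binomial=0} to bound point masses) is routine.
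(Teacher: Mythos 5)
Your overall strategy is the same as the paper's: couple the binomial parameters in the $-+$ case to a reflection of those in the $++$ case via \cref{cor:4rv}, then exploit the complementarity of the threshold events so that $p_0+p_0'$ and $p_1+p_1'$ are each $1$ up to the coupling error. (The paper couples all four cases to a single i.i.d.\ pair $Z,Z'\sim\Bi(\lceil n/2-np\rceil,p)$, which is cosmetically different but amounts to the same cancellation.) However, as written your argument has a genuine gap in the final step. The pairs you form are \emph{exact} complements: $\PP[Y_1\ge Y_2]+\PP[Y_1+1\le Y_2]=1$ and $\PP[Y_1+1\ge Y_2]+\PP[Y_1+2\le Y_2]=1$, with no point-mass correction (the identity $\PP[A\ge B]+\PP[A\le B]=1+\PP[A=B]$ does not apply because your two shifts differ by one). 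The terms $\PP[Y_1=Y_2-1]$ and $\PP[Y_1=Y_2-2]$ in your display are therefore spurious, and your attempt to absorb them via ``$np\cdot O(1/\sqrt{np})=O(\sqrt{np})=O(\log^2 n)$'' is false in the relevant range: $\sqrt{np}\ge\sqrt{\lambda'}\,n^{1/5}\sqrt{\log n}\gg\log^2 n$. Indeed $O(\sqrt{np})$ is exactly the trivial bound on $|\mu_++\mu_-|$ already available from \cref{claim:1st_moment}, which this claim exists to beat; if the point masses were genuinely present, your argument would only recover that useless bound. Deleting them repairs the proof.

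A secondary bookkeeping slip: $n_1'=n_1+1$ and $n_2'=n_2-1$ (up to floor/ceiling), so comparing $(n_1',n_2')$ with the reflected pair $(n_2,n_1)$ requires $\Delta=|n_1-n_2|+O(1)=O(\sqrt{np}\log n)$ in \cref{cor:4rv}, not $\Delta=O(1)$. This does not hurt: the coupling error per probability becomes $O(p\log n)$, and $np\cdot O(p\log n)=O(np^2\log n)=O(\log n)$ since $np^2\le\lambda^2$, which is exactly the error the paper carries. With these two corrections your proof coincides with the paper's.
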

\begin{proof}[Proof of the claim]
Let $Z$ and $Z'$ be i.i.d.~variables with the distribution $\Bi(\lceil n/2 -np \rceil,p)$. %$\Bi(\lceil n/2 -2np \rceil,p)$.
Note first that $n_1$ and $n_2$ in each of the four cases depending on the signs of $r_0(u)$ and $r_0(v)$ vary from $\lceil n/2 -np \rceil$ %$\lceil n/2 -2np \rceil$ 
by at most $O(\sqrt{np}\log n)$ by~\ref{it:+-1}. 
Let $(p_0,p_1)$ and $(p_0',p_1')$ be as defined in the cases $r_0(u)=r_0(v)=+1$ and $r_0(u)=-1,r_0(v)=+1$ above.
\cref{cor:4rv} then yields 
\begin{align*}
    &p_0 = \PP[Z\geq Z'] \pm O(p\log n),
    ~~p_1 = \PP[Z+1\geq Z']\pm O(p\log n),\\
    &p_0' = \PP[Z'-1\geq Z] \pm O(p\log n),\text{ and }~ p_1' = \PP[Z'-2\geq Z] \pm O(p\log n).
\end{align*}
In particular, $p_0+p_0' = 1\pm O(p\log n)$ and $p_1+p_1'=1\pm O(p\log n)$. Therefore, as $np^2\log n\ll \log^2 n$,
\begin{align*}
    \left|\mu_{\pp}+\mu_{\mpl}\right| = \left|np(p_0+p_1+p_0'+p_1'-2) \pm O\hspace{-.5mm}\left(\log^2 n\right)\right|
    = O\hspace{-.5mm}\left(\log^2 n\right).
\end{align*}
An analogous coupling argument works for $\mu_{\pmi}$ and $\mu_{\mm}$.
Hence, \begin{align*}
    |\mu_+ +\mu_-|\leq \frac{1}{2}\big(|\mu_{\pp}+\mu_{\mpl}|+|\mu_{\pmi}+\mu_{\mm}|\big) = O(\log^2n).\tag*{\qedhere}
\end{align*}
\end{proof}
Let $R_{uv}^{\pp}$, $R_{uv}^{\pmi}$, $R_{uv}^{\mpl}$ and $R_{uv}^{\mm}$ be the events that $r_0(u)$ and $r_0(v)$ take the corresponding signs, respectively.
Then the probability of each of the four events is easily computed as $1/4\pm O(p)$ given $G_{uv}^*\in\mathcal{E}_{uv}$, e.g.,
\begin{align}\label{eq:1/4}
    \PP[R_{uv}^{\pp}|G_{uv}^*]=
    \frac{\binom{n-|\Gamma|-2}{\lceil n/2\rceil-m_1-2}}{\binom{n-|\Gamma|}{\lceil n/2\rceil-m_1}} = \frac{(\lceil n/2\rceil-m_1)(\lceil n/2\rceil-m_1-1)}{(n-|\Gamma|)(n-|\Gamma|-1)} = \frac{1}{4}\pm O(p).
\end{align}
We are now ready to estimate the variance of $\sum_{u\in V(G)} \mathbf{1}_{A_u}$.
\begin{claim}\label{claim:covariance}
$\mathrm{Var}\left(\sum_{u\in V(G)} \mathbf{1}_{A_u}\right)=\sum_{u,v\in V(G)} \PP[A_u \cap A_v]-\PP[A_u]\cdot \PP[A_v]=O\left(\frac{n^{3/2}\log^2 n}{\sqrt{p}}\right)$.
\end{claim}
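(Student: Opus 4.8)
The plan is to show that $\PP[A_u\cap A_v]-\PP[A_u]\PP[A_v]=O(\log^2 n/\sqrt{np})$ for every ordered pair $u\neq v$; since the $n$ diagonal terms contribute at most $1$ each and $n^2\cdot\log^2 n/\sqrt{np}=n^{3/2}\log^2 n/\sqrt p$, this yields the stated bound. Fix $u\neq v$, write $\tau:=-\gamma p^{3/2}n$ and $\tau':=\tau/\sqrt{np}=-\gamma p\sqrt n$, and recall that $\mathcal E_{uv}$ holds with probability $1-O(n^{-2})$, so its complement contributes only $O(n^{-2})=o(\log^2 n/\sqrt{np})$ to every quantity below; I work on $\mathcal E_{uv}$ throughout and suppress this routine bookkeeping.

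First I would \emph{decouple} $A_u$ and $A_v$: expose all of $G_{uv}^*$ and all of $r_0$. Since $\sum_{w\in N(u)}r_1(w)=T_u+D_u$ with $T_u:=\sum_{w\in U}r_1(w)$, where $D_u$ collects the contributions of $N(u)\cap N(v)$ (so $|D_u|\le C\log n$ by~(ii)) together with $r_1(v)$ if $u\sim v$ --- a \emph{determined} constant, since all neighbours of $v$ lie in $\Gamma\cup\{u\}$ --- the event $A_u$ is sandwiched between $\{T_u>\tau+C\log n+1\}$ and $\{T_u>\tau-C\log n-1\}$, and likewise $A_v$ between the analogous events for $T_v$. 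Conditioned on $(G_{uv}^*,r_0)$, the variables $r_1(w)$, $w\in U$, are functions of the edges from $U$ to $V(G)\setminus(\Gamma\cup\{u,v\})$, which are disjoint from those for $V$; hence $T_u$ and $T_v$ are independent and so are the two families of sandwiching events. As $T_u$ is a sum of $\Theta(np)$ independent $\pm1$-variables of variance $np\pm O(\sqrt{np}\log n)$ by~\cref{claim:2nd_moment}, a standard anti-concentration estimate (as in the proof of~\cref{lem:Binomial=0}) gives $\max_t\PP[T_u=t\mid G_{uv}^*,r_0]=O(1/\sqrt{np})$, so the two sandwiching probabilities are within $O(\log n/\sqrt{np})$ of $\PP[A_u\mid G_{uv}^*,r_0]$. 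Combining,
\begin{align*}
\PP[A_u\cap A_v\mid G_{uv}^*,r_0]=\PP[A_u\mid G_{uv}^*,r_0]\cdot\PP[A_v\mid G_{uv}^*,r_0]\pm O(\log n/\sqrt{np}),
\end{align*}
and taking expectations, $\PP[A_u\cap A_v]-\PP[A_u]\PP[A_v]=\operatorname{Cov}\!\big(\PP[A_u\mid G_{uv}^*,r_0],\,\PP[A_v\mid G_{uv}^*,r_0]\big)+O(\log n/\sqrt{np})$.

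The heart of the matter is the second step: $\PP[A_u\mid G_{uv}^*,r_0]$ is, up to a small random remainder, a function of the single bit $r_0(u)$. Using the sandwich again together with \cref{berry} and~\cref{claim:2nd_moment}, one gets $\PP[A_u\mid G_{uv}^*,r_0]=\Psi\!\big((\tau-\mu)/\sqrt{np}\big)+O(\log n/\sqrt{np})$, where $\mu$ is the conditional mean of $T_u$; by~\eqref{eq:mu_sigma} this mean equals $\mu_+\pm O(\log^2 n)$ if $r_0(u)=+1$ and $\mu_-\pm O(\log^2 n)$ if $r_0(u)=-1$, and by~\cref{claim:mu+-} we have $\mu_-=-\mu_+\pm O(\log^2 n)$. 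Writing $\epsilon_u:=\pm1$ according to $r_0(u)$ and $\bar\mu:=\EE[\mu_+]$ (a number not depending on $r_0$), the contraction property of~\cref{lem:psi} then yields
\begin{align*}
\PP[A_u\mid G_{uv}^*,r_0]=G_u+R_u,\qquad G_u:=\Psi\!\big(\tau'-\epsilon_u\bar\mu/\sqrt{np}\big),\qquad |R_u|\le\frac{|\mu_+-\bar\mu|}{\sqrt{np}}+O\!\Big(\frac{\log^2 n}{\sqrt{np}}\Big),
\end{align*}
where $G_u$ depends only on $r_0(u)$. The crucial point is that $\mu_+$ is highly concentrated: by~\eqref{eq:mu} it equals, up to an $O(\log^2 n)$ error term, the quantity $np(p_0+p_1-1)$, which by~\cref{cor:4rv} is Lipschitz in the counts $|\Gamma_+|,|\Gamma_-|$ with Lipschitz constant $O(n^{1/2}p^{3/2})$; since $|\Gamma_+|,|\Gamma_-|$ are hypergeometric-type variables on $\Gamma$ (of size $|\Gamma|=O(np)$ by~(i)), one finds $\VAR(\mu_+)=O(\log^4 n)$ --- only a fixed power of $\log n$ is needed --- and hence $\EE[R_u^2]=O(\log^4 n/(np))$.

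It remains to expand $\operatorname{Cov}(G_u+R_u,\,G_v+R_v)$. The leading term $\operatorname{Cov}(G_u,G_v)$ is $O(1/n)$, because $G_u$ and $G_v$ are affine functions of $\mathbf 1[r_0(u)=+1]$ and $\mathbf 1[r_0(v)=+1]$, whose covariance under the uniform choice of $r_0$ is $\Theta(1/n)$ by a direct computation. By Cauchy--Schwarz and $\EE[R_u^2]=O(\log^4 n/(np))$, the remaining three terms are $O(\sqrt{\log^4 n/(np)})=O(\log^2 n/\sqrt{np})$ and $O(\log^4 n/(np))=o(\log^2 n/\sqrt{np})$, so the per-pair covariance is $O(\log^2 n/\sqrt{np})$; summing over pairs finishes the proof. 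The main obstacle is exactly the decomposition $\PP[A_u\mid G_{uv}^*,r_0]=G_u+R_u$: although $A_u$ is a fairly global event, it must be shown to be governed --- up to errors summably small over all $n^2$ pairs --- by the bit $r_0(u)$ alone, with the residual dependence on $G_{uv}^*$ (in particular on the ``$v$-side'' of $G_{uv}^*$) confined to a remainder of $o(np/\log^4 n)$ variance. This is precisely what \cref{claim:1st_moment}, \cref{claim:2nd_moment}, \cref{claim:mu+-}, \cref{cor:4rv} and the concentration of $|\Gamma_+|-|\Gamma_-|$ are designed to deliver.
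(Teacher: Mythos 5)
Your proposal is correct and follows essentially the same route as the paper: decouple $A_u$ and $A_v$ by conditioning on $(G_{uv}^*,r_0)$ and sandwiching with the $\pm C\log n$-shifted events, apply the Berry--Esseen bound together with Claims~\ref{claim:1st_moment}--\ref{claim:mu+-} to write the conditional probabilities as $\Psi$-values depending, up to admissible errors, only on the sign of $r_0(u)$, and finish with a covariance computation using the near-independence of $r_0(u)$ and $r_0(v)$. The only real difference is the final bookkeeping: you expand $\operatorname{Cov}(G_u+R_u,\,G_v+R_v)$ with an explicit $L^2$-controlled remainder (via the concentration of $\mu_+$ as a function of $G_{uv}^*$) and Cauchy--Schwarz, whereas the paper matches an upper bound on $\PP[A_u\cap A_v\cap\mathcal{E}_{uv}]$ against a lower bound on $\PP[A_u\cap\mathcal{E}_{uv}]\cdot\PP[A_v\cap\mathcal{E}_{uv}]$, both expressed as $\tfrac14\EE\big[(\Psi(x_+)+\Psi(x_-))^2\mathbf{1}_{\mathcal{E}_{uv}}\big]\pm O\big(\tfrac{\log^2 n}{\sqrt{np}}\big)$ --- a step that implicitly relies on the same concentration you make explicit.
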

\begin{proof}[Proof of the claim]
Let $A_u^\pm$ be the events that $\sum_{w\in U}r_1(w)>-\gamma p^{3/2}n\pm C\log n$, respectively with the corresponding signs. 
In particular, for $G_{uv}\in \mathcal{E}_{uv}$, $A_u^+$ implies $A_u$ and $A_u$ implies $A_u^-$ by \ref{it:N(u,v)}.
The mutual independence of all $r_1(w)$, $w\in U$, given fixed $G_{uv}$ and $r_0$, allows us to apply the Berry--Esseen bound.
For each fixed $G_{uv}$ and $r_0$ such that  $r_0(u)=r_0(v)=+1$ and $G_{uv}^*\in\mathcal{E}_{uv}$,
\begin{align}\label{eq:lower_psi}
     \nonumber\PP[A_u|G_{uv},r_0] &\geq 
     \PP[A_u^+|G_{uv},r_0] \\
     &= \PP\left[\frac{\sum_{w \in U} r_1(w) - \mu_{\pp}}{\sigma_{\pp}} > \frac{- \gamma p^{3/2}n + C\log n - \mu_{\pp}}{\sigma_{\pp}}~\bigg|~G_{uv},r_0\right]\nonumber\\
     &= \Psi\left(\frac{- \gamma p^{3/2}n +C\log n - \mu_{\pp}}{\sigma_{\pp}}\right) \pm O\left(\frac{1}{\sqrt{np}}\right)\nonumber\\
     &\geq \Psi\left(\frac{- \gamma p^{3/2}n+C'\log^2 n - \mu_+}{\sigma_{\pp}}\right) - O\left(\frac{1}{\sqrt{np}}\right).
\end{align}
where $\Psi(x):=1-\Phi(x)$ as in~\cref{lem:psi} and $C'>0$ is from the estimate $|\mu_+-\mu_{\pp}|=O(\log^2n)$ by~\eqref{eq:mu_sigma}, which absorbs $C\log n$.
By using $A_u^-$, one also obtains the upper bound
\begin{align}\label{eq:upper_psi}
     \PP[A_u|G_{uv},r_0] 
     %\leq \PP[A_u^-|\mathcal{E}_{uv}\cap R_{uv}^{\pp}] 
     &\leq \PP\left[\frac{\sum_{w \in U} r_1(w) - \mu_{\pp}}{\sigma_{\pp}} > \frac{- \gamma p^{3/2}n -C \log n - \mu_{\pp}}{\sigma_{\pp}}~\bigg|~G_{uv},r_0\right]\nonumber\\
     &\leq \Psi\left(\frac{- \gamma p^{3/2}n -C'\log^2 n- \mu_{+}}{\sigma_{\pp}}\right) + O\left(\frac{1}{\sqrt{np}}\right).
\end{align}
Both bounds~\eqref{eq:lower_psi} and~\eqref{eq:upper_psi} can be written as
$\Psi(x_+)\pm O\big(\frac{\log^2 n}{\sqrt{np}}\big)$, where $x_+ = (-\gamma p^{3/2}n-\mu_+)/\sqrt{np}$.
Indeed, by using~\cref{lem:psi}(i), i.e., $|\Psi(x)-\Psi(y)| \le |x-y|$,
\begin{align*}
   \left|\Psi\left(\frac{- \gamma p^{3/2}n+ C'\log n - \mu_+}{\sigma_{\pp}}\right) - \Psi(x_+)\right|
   &\leq 
    C'\log^2 n \left(\frac{1}{\sigma_+}+\frac{1}{\sqrt{np}}\right) + \left|\gamma p^{3/2}n+\mu_+\right|\left|\frac{1}{\sigma_{\pp}}-\frac{1}{\sqrt{np}}\right|\\
    &\leq \frac{3C'\log^2 n}{\sqrt{np}} +O(\sqrt{np})\cdot\frac{|\sigma_{\pp}-\sqrt{np}|}{\sigma_{\pp}\sqrt{np}}
    =O\left(\frac{\log^2 n}{\sqrt{np}}\right),
\end{align*}
where we use the estimates $\sigma_{\pp}=\sqrt{np}\pm O(\log n)$ by Claim~\ref{claim:2nd_moment} and $\mu_+=O(\sqrt{np})$ by Claim~\ref{claim:1st_moment}.
The same bound also holds for $\Psi\left(\frac{- \gamma p^{3/2}n- C'\log^2 n - \mu_+}{\sigma_{\pp}}\right)$.
Now replace $\sigma_{\pp}$ by $\sigma_{\pmi}$ by changing $r_0(v)$ from $+1$ to $-1$, while leaving all other values of $r_0$ and $G_{uv}$ the same.
Then we again have the same bound $\PP[A_u|G_{uv},r_0] = \Psi(x_+)\pm O\left(\frac{\log^2 n}{\sqrt{np}}\right)$.
Analogously, for $r_0$ with $r_0(u)=-1$,
we obtain $\PP[A_u|G_{uv}, r_0]=\Psi(x_-)\pm O\left(\frac{\log^2 n}{\sqrt{np}}\right)$,
where $x_-=(-\gamma p^{3/2}n -\mu_-)/\sqrt{np}$,
by using $\sigma_{\mpl},\sigma_{\mm}$ and $\mu_-$.

Given fixed $r_0$ and $G_{uv}$, 
$A_u^-$ and $A_v^-$ are independent as $r_1(w)$, $w\in U\cup V$, are mutually independent. Hence, for each fixed $G_{uv}$ and $r_0$ such that $r_0(u)=r_0(v)=+1$ and $G_{uv}^*\in\mathcal{E}_{uv}$,
\begin{align*}
    \PP[A_u \cap A_v|G_{uv},r_0] 
    &\leq \PP[A_u^- \cap A_v^-|G_{uv},r_0] \\
    &=\PP[A_u^-|G_{uv},r_0]\cdot \PP[A_v^-|G_{uv},r_0]
    \leq \Psi(x_{+})^2  +O\left(\frac{\log^2 n}{\sqrt{np}}\right).
\end{align*}
Indeed, one may easily obtain corresponding upper bounds for other values of $r_0(u)$ and $r_0(v)$, e.g., $\PP[A_u \cap A_v|G_{uv},r_0]=\Psi(x_{+})\Psi(x_-)  +O\left(\frac{\log^2 n}{\sqrt{np}}\right)$ if $r_0(u)=+1$ and $r_0(v)=-1$.
Combining these bounds with the weight $1/4\pm O(p)$ in~\eqref{eq:1/4} gives
\begin{align*}
    \PP[A_u \cap A_v|G_{uv}^*] &= \EE\big[\PP[A_u \cap A_v|G_{uv},r_0]\big|G_{uv}^*\big]\\
    &\leq \frac{1}{4}\EE\hspace{-.5mm}\left[\Psi(x_{+})^2 +2\Psi(x_{+})\Psi(x_{-})+\Psi(x_{-})^2\Big|G_{uv}^*\right]+O\left(\frac{\log^2 n}{\sqrt{np}}\right)\\
   & =\frac{1}{4}\EE\hspace{-.5mm}\left[\big(\Psi(x_{+})+\Psi(x_{-})\big)^2\Big|G_{uv}^*\right]+O\left(\frac{\log^2 n}{\sqrt{np}}\right),
\end{align*}
where the $O(p)$ error term in the weight $1/4\pm O(p)$ is absorbed by $O\big(\frac{\log^2 n}{\sqrt{np}}\big)$.
Summing this bound over all $G_{uv}^*\in\mathcal{E}_{uv}$ with the corresponding probability weight that $G_{uv}^*$ appears yields
\begin{align*}
    \PP[A_u \cap A_v\cap \mathcal{E}_{uv}]\leq \frac{1}{4}\EE\hspace{-.5mm}\left[\big(\Psi(x_{+})+\Psi(x_{-})\big)^2\mathbf{1}_{\mathcal{E}_{uv}}\right]+O\left(\frac{\log^2 n}{\sqrt{np}}\right).
\end{align*}
Analogously, \eqref{eq:lower_psi} and its variants give the lower bound
\begin{align*}
    \PP[A_u\cap \mathcal{E}_{uv}]\cdot \PP[A_v\cap \mathcal{E}_{uv}]\geq 
    \frac{1}{4}\EE\hspace{-.5mm}\left[\left(\Psi(x_{+})+\Psi(x_{-})\right)^2\mathbf{1}_{\mathcal{E}_{uv}}\right]-O\left(\frac{\log^2 n}{\sqrt{np}}\right).
\end{align*}
Summing the above over distinct $u,v\in V(G)$ gives
\begin{align*}
   \sum_{u\neq v} \PP[A_u \cap A_v\cap\mathcal{E}_{uv}]-\PP[A_u\cap\mathcal{E}_{uv}]\cdot \PP[A_v\cap \mathcal{E}_{uv}] = O\left(\frac{n^{3/2}\log^2 n}{\sqrt{p}}\right).
\end{align*}
Therefore, we estimate the variance of $\sum_{v\in V(G)}\mathbf{1}_{A_u}$ as
\begin{align}\label{eq:covar}
    \nonumber\sum_{u,v\in V(G)} &\PP[A_u \cap A_v]-\PP[A_u]\cdot \PP[A_v] \le n+\sum_{u\neq v} \PP[A_u \cap A_v]-\PP[A_u]\cdot \PP[A_v]\\ \nonumber
    &\leq 
    n+\sum_{u\neq v} \left(\PP[A_u \cap A_v\cap \mathcal{E}_{uv}]+\PP[\overline{\mathcal{E}}_{uv}]\right)-%\big(1-O(n^{-2})\big)
    \PP[A_u\cap\mathcal{E}_{uv}]\cdot \PP[A_v\cap \mathcal{E}_{uv}]\\ 
    &\leq O(n)+\sum_{u\neq v} \PP[A_u \cap A_v\cap \mathcal{E}_{uv}]-\PP[A_u\cap \mathcal{E}_{uv}]\cdot \PP[A_v\cap \mathcal{E}_{uv}] = O\left(\frac{n^{3/2}\log^2 n}{\sqrt{p}}\right),
\end{align}
where the last inequality uses the bound $\PP[\overline{\mathcal{E}}_{uv}]=O(n^{-2})$. This concludes the proof of the claimed variance estimate.
\end{proof}
It remains to bound $\sum_{u} \PP[A_u]$ from below to use Chebyshev's inequality.
Note first that $r_0(u)$ takes each sign with probability $1/2\pm O(p)$ given $G_{uv}^*$, which can easily be computed by an analogous estimate to~\eqref{eq:1/4}.
Recall that, depending on the sign of $r_0(u)$, $\PP[A_u|G_{uv},r_0]$ can be estimated as either $\Psi(x_+)\pm O\left(\frac{\log^2 n}{\sqrt{np}}\right)$ or $\Psi(x_-)\pm O\left(\frac{\log^2 n}{\sqrt{np}}\right)$. 
Hence, for each $G_{uv}^*\in\mathcal{E}_{uv}$,
\begin{align}\label{eq:Au_lower}
    \PP[A_u|G_{uv}^*] = \EE\big[\PP[A_u |G_{uv},r_0]\big|G_{uv}^*\big]\geq
    \frac{1}{2}\EE\hspace{-.5mm}\left[\Psi(x_+)+\Psi(x_-)\Big|G_{uv}^*\right]- O\left(\frac{\log^2 n}{\sqrt{np}}\right).
\end{align}
For each fixed $G_{uv}$ and $r_0$ with $G_{uv}^*\in\mathcal{E}_{uv}$, both $x_+$ and~$x_-$ are $O(1)$, as $|\mu_\pm|+\gamma p^{3/2}n= O(\sqrt{np})$ by~\cref{claim:1st_moment}.
Moreover, 
\begin{align*}
    x_+ +x_- = -\frac{2}{\sqrt{np}}\left(\gamma p^{3/2}n +\mu_+ +\mu_-\right)\leq -\gamma p\sqrt{n}<0,
\end{align*}
as $|\mu_+ +\mu_-|=O(\log^2 n)\ll p^{3/2}n$ by~\cref{claim:mu+-}.
\cref{lem:psi}(ii) then gives a constant $C''>0$ such that 
\begin{align*}
    \Psi(x_+)+\Psi(x_-)\geq 1-C''(x_+ +x_-) \geq 1+C''\gamma p \sqrt{n}.
\end{align*}
Substituting this into~\eqref{eq:Au_lower} and summing over all $G_{uv}^*\in\mathcal{E}_{uv}$ gives
\begin{align*}
    \sum_{u\in V(G)\setminus\{v\}}\PP[A_u\cap\mathcal{E}_{uv}]\geq\frac{n-1}{2}\big(\Psi(x_+)+\Psi(x_-)\big)- O\left(\frac{\sqrt{n}\log^2 n}{\sqrt{p}}\right)
    \geq \frac{n}{2}+3K\gamma pn^{3/2}
\end{align*}
for a constant $K>0$, as $pn^{3/2}\gg \log^2 n\sqrt{n/p}$.
Thus,
\begin{align*}
    \sum_{u\in V(G)}\PP[A_u] \geq 
    \sum_{u\in V(G)\setminus\{v\}}\PP[A_u\cap\mathcal{E}_{uv}] 
    \geq \frac{n}{2}+2K\gamma pn^{3/2}.
\end{align*}
Finally, together with \eqref{eq:covar}, the Chebyshev inequality yields
\begin{align*}
    \PP\left[\sum_u \mathbf{1}_{A_u} \geq \frac{n}{2} + K\gamma pn^{3/2}\right] &\leq
    \PP\left[\left|\sum_{u\in V(G)} \mathbf{1}_{A_u} - \mathbb{E}\left(\sum_{u\in V(G)} \mathbf{1}_{A_u}\right)\right| \ge K\gamma pn^{3/2}\right]  \\
    &\le \frac{\mathrm{Var}\left(\sum_{u} \mathbf{1}_{A_u}\right)}{\left(K\gamma pn^{3/2}\right)^2}
    = O\left(\frac{\log^2 n}{p^{5/2}n^{3/2}}\right),
\end{align*}
where the last estimate follows from~\cref{claim:covariance}.
Finally, we have $O\left(\frac{\log^2 n}{p^{5/2}n^{3/2}}\right)=O\left(\frac{1}{\sqrt{\log n}}\right)=o(1)$, as $p\geq \lambda' n^{-3/5}\log n$.
\end{proof}

\section{After Day 2}\label{sec:day1+}

To finish the proof of~\cref{thm:main_reduced}, we use some well-known ``pseudorandom" properties of random graphs $G(n,p)$.
By~\cref{lem:whp}, w.h.p. the minimum degree of $G(n,p)$ is at least $0.9np$ whenever $p\ge  \frac{\log^2 n}{n}$.
%Second, 
We say a graph $G$ is \emph{$(p,\beta)$-jumbled} if, for any vertex subsets $U,V\subseteq V(G)$,
\begin{align*}
    \big|e(U,V)-p|U||V|\big|\leq \beta\sqrt{|U||V|}.
\end{align*}
The following is a standard fact in the theory of pseudorandomness.
\begin{lemma}[{\cite[Corollary~2.3]{KS06}}]\label{lem:sqrtnp}
For $p\leq 0.99$, $G(n,p)$ is w.h.p. $(p,\beta)$-jumbled with $\beta=O(\sqrt{np})$.
\end{lemma}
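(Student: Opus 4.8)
The statement is a standard pseudorandomness estimate, so what I would do is recall the classical argument, which passes through the spectral norm of the centred adjacency matrix rather than through a direct union bound over pairs of vertex sets (a naive union bound over \emph{graph-independent} pairs $(U,V)$ provably fails, since it cannot see the extremal configuration $V=N(u)$, where $|e(U,V)-p|U||V||=(1-p)\deg(u)\approx\sqrt{np}\cdot\sqrt{\deg(u)}$, which already forces $\beta=(1-o(1))\sqrt{np}$). Writing $A$ for the adjacency matrix of $G=G(n,p)$ and $J$ for the $n\times n$ all-ones matrix, the first step is the exact identity $e(U,V)-p|U||V|=\langle \mathbf{1}_U,(A-pJ)\mathbf{1}_V\rangle$ (with $e(U,V)$ the number of incident ordered pairs in $U\times V$; any other convention alters $e(U,V)$ by at most $|U\cap V|\le\sqrt{|U||V|}$, which is harmless). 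Since $|\langle x,My\rangle|\le\norm{M}\norm{x}\norm{y}$, this gives $|e(U,V)-p|U||V||\le\norm{A-pJ}\sqrt{|U||V|}$, so it suffices to prove $\norm{A-pJ}=O(\sqrt{np})$ w.h.p., after which $\beta:=\norm{A-pJ}$ does the job.

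Next I would reduce to a mean-zero matrix: since $\EE A=p(J-I)$, we have $A-pJ=(A-\EE A)-pI$, hence $\norm{A-pJ}\le\norm{A-\EE A}+p$ with $p=o(\sqrt{np})$. So the task becomes bounding the operator norm of $M:=A-\EE A$, a symmetric matrix with zero diagonal whose above-diagonal entries are independent, mean zero, of variance $p(1-p)\le p$, and bounded by $1$ in absolute value. I would establish $\norm{M}=O(\sqrt{np})$ by the moment method: estimate $\EE[\operatorname{tr}(M^{2k})]$ by counting closed walks of length $2k$ in $K_n$ to get $\EE[\operatorname{tr}(M^{2k})]=O\bigl(n(C\sqrt{np})^{2k}\bigr)$ for an absolute constant $C$, then take $k\asymp\log n$ and apply Markov's inequality; alternatively, I would simply quote an off-the-shelf spectral-norm bound for random symmetric matrices with bounded independent entries.

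The step I expect to be delicate — and the reason a lower bound on $p$ is implicitly needed — is precisely this spectral estimate: $\norm{M}=O(\sqrt{np})$ holds only when $np$ is not too small. When $np=o(\log n)$, $G(n,p)$ w.h.p.\ has a vertex of degree $\Theta(\log n/\log\log n)$, and a single such vertex already forces $\norm{A}=\Omega(\sqrt{\log n/\log\log n})\gg\sqrt{np}$, so there one must delete heavy vertices or replace $\beta$ by $O(\sqrt{np}+\sqrt{\log n/\log\log n})$. This causes no trouble for the present application: throughout the range $p\ge\lambda'n^{-3/5}\log n$ used here, $np$ is polynomially large in $n$ (in particular $k\asymp\log n$ is far below $np$), the closed-walk bound goes through without any correction, and we obtain $\beta=\norm{A-\EE A}+p=O(\sqrt{np})$ w.h.p., as required.
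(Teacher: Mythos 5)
The paper does not actually prove this lemma: it is imported as a black box from \cite{KS06}, so there is no in-paper argument to compare yours against. What you have written is the standard spectral proof that underlies that citation, and it is sound. The chain $\abs{e(U,V)-p|U||V|}\le\norm{A-pJ}\sqrt{|U||V|}$, the split $A-pJ=(A-\EE A)-pI$ with $\norm{pI}=p=o(\sqrt{np})$, and the trace-method bound $\norm{A-\EE A}=O(\sqrt{np})$ all go through; your point that a naive union bound over pairs $(U,V)$ cannot give $\beta=O(\sqrt{np})$ for $p=o(1)$ is also correct (already for $|U|=1$ and $|V|=\Theta(n)$ the entropy of choosing $V$ is $\Theta(n)$ while the available deviation exponent at scale $\beta\sqrt{|V|}=\Theta(n\sqrt{p})$ is $o(n)$ in the relevant range of $p$). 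Your caveat is well taken and worth recording: as literally stated, for all $p\le 0.99$, the lemma fails when $1\ll np=o(\log n)$, since a maximum-degree vertex $u$ together with $V=N(u)$ forces $\beta\ge(1-p)\sqrt{\Delta(G)}\gg\sqrt{np}$; but in the paper's regime $np\ge\lambda'n^{2/5}\log n$ this is vacuous, the walk length $k\asymp\log n$ is far below the threshold (of order a power of $np$) at which the F\"uredi--Koml\'os count degrades, and one may equally well quote an off-the-shelf bound such as Vu's or Feige--Ofek's, valid for $np\ge C\log n$. In short: your proof is a correct, self-contained replacement for the citation in the range where the paper uses the lemma.
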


Let $P_{t}:=\{v\in[n]:s_t(v)=+1\}$ and $N_t:=\{v\in[n]:s_t(v)=-1\}$.
We use \cite[Lemma~7 and 8]{Z18} by Zehmakan. Here we give a short proof, as we need a slightly more general version.
\begin{lemma}\label{lem:Zehmakan}
Let $\delta\in (0,1)$ and let $G$ be a $(p,\beta)$-jumbled graph on $n$ vertices with minimum degree at least $\delta np$. Then
\begin{enumerate}[(i)]
    \item if $\sum_{v}s_t(v)\geq \frac{8\beta}{p\sqrt{\delta}}$ then $\sum_{v}s_{t+1}(v)\geq (1-\delta/2)n$;
    \item if $\sum_{v}s_{t}(v)\geq (1-\alpha) n $ for some $\alpha\le \delta/2$, then $\sum_{v}s_{t+1}(v)\geq \left(1-\alpha\cdot \frac{16\beta^2}{\delta^2n^2p^2}\right)n$.
\end{enumerate}
\end{lemma}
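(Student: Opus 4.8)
The plan is to bound $|N_{t+1}|$ from above in both parts, since $\sum_{v}s_{t+1}(v)=n-2|N_{t+1}|$, and the only tool needed is a double count of the edges leaving $N_{t+1}$ together with the jumbledness estimate. First I would record the two consequences of the update rule: if $v\in N_{t+1}$ then $\sum_{u\sim v}s_t(u)\le 0$, so
\[
|N(v)\cap N_t|\ \ge\ |N(v)\cap P_t| \qquad\text{and}\qquad |N(v)\cap N_t|\ \ge\ \tfrac12\deg(v)\ \ge\ \tfrac12\delta np .
\]
Summing the left inequality over $v\in N_{t+1}$ gives $e(N_{t+1},N_t)\ge e(N_{t+1},P_t)$, and summing the right one gives $e(N_{t+1},N_t)\ge \tfrac12\delta np\,|N_{t+1}|$. (If $N_{t+1}=\emptyset$ both conclusions hold trivially, so assume $|N_{t+1}|\ge 1$.)

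For part (ii), the assumption $\alpha\le\delta/2$ gives $|N_t|\le \tfrac{\alpha n}{2}\le\tfrac{\delta n}{4}$, so the leading term of jumbledness is small: $e(N_{t+1},N_t)\le p|N_{t+1}||N_t|+\beta\sqrt{|N_{t+1}||N_t|}\le \tfrac14\delta np\,|N_{t+1}|+\beta\sqrt{|N_{t+1}||N_t|}$. Comparing this with the lower bound $\tfrac12\delta np\,|N_{t+1}|$ and dividing through by $\sqrt{|N_{t+1}|}$ yields $\tfrac14\delta np\,\sqrt{|N_{t+1}|}\le\beta\sqrt{|N_t|}$, hence $|N_{t+1}|\le\tfrac{16\beta^2}{\delta^2n^2p^2}\,|N_t|\le\tfrac{8\alpha\beta^2}{\delta^2np^2}$; substituting into $\sum_v s_{t+1}(v)=n-2|N_{t+1}|$ gives exactly the stated bound.

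For part (i) this absorption is unavailable, because $|N_t|$ may be as large as roughly $n/2$ and then $p|N_{t+1}||N_t|$ already swamps the lower bound. Instead I would not estimate $e(N_{t+1},N_t)$ in isolation but compare it with $e(N_{t+1},P_t)$: applying jumbledness to both sides of $e(N_{t+1},N_t)\ge e(N_{t+1},P_t)$ and collecting the $p$-terms gives
\[
\beta\sqrt{|N_{t+1}|}\bigl(\sqrt{|N_t|}+\sqrt{|P_t|}\bigr)\ \ge\ p\,|N_{t+1}|\bigl(|P_t|-|N_t|\bigr)\ =\ p\,|N_{t+1}|\sum_v s_t(v).
\]
Now plug in the hypothesis $\sum_v s_t(v)\ge \tfrac{8\beta}{p\sqrt\delta}$ together with the crude bound $\sqrt{|N_t|}+\sqrt{|P_t|}\le\sqrt{2n}$ to get $\tfrac{8\beta}{\sqrt\delta}\sqrt{|N_{t+1}|}\le\beta\sqrt{2n}$, whence $|N_{t+1}|\le\tfrac{\delta n}{32}\le\tfrac{\delta n}{4}$ and $\sum_v s_{t+1}(v)=n-2|N_{t+1}|\ge(1-\tfrac{\delta}{2})n$.

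The hard part — really the only delicate point — is this maneuver in part (i): one must resist bounding $e(N_{t+1},N_t)$ by itself and instead cancel the dominant $p$-weighted terms against $e(N_{t+1},P_t)$, so that what survives is $p|N_{t+1}|\sum_v s_t(v)$; the constant $8$ in the threshold $\tfrac{8\beta}{p\sqrt\delta}$ is calibrated precisely so that this $p$-weighted contribution of the bias overpowers the $\beta\sqrt{\,\cdot\,}$ discrepancy error. Everything else is routine arithmetic; note in passing that part (i) does not actually use the minimum-degree hypothesis, which is needed only for part (ii).
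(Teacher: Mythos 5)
Your proof is correct and follows essentially the same route as the paper's: for (i) you cancel the $p$-weighted terms between $e(N_{t+1},N_t)\ge e(N_{t+1},P_t)$ via jumbledness to isolate $p|N_{t+1}|\sum_v s_t(v)$, and for (ii) you use the minimum-degree bound $e(N_{t+1},N_t)\ge\tfrac12\delta np|N_{t+1}|$ and absorb $p|N_{t+1}||N_t|$ using $|N_t|\le\delta n/4$. The only differences are cosmetic (slightly sharper constants in (i) and the explicit remark about $N_{t+1}=\emptyset$ and about (i) not needing the degree hypothesis, all of which are accurate).
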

\begin{proof}
(i) Since each vertex $v\in N_{t+1}$ has at least as many neighbours in $N_t$ as in $P_{t}$, we have $e(N_{t+1},N_t)\geq e(N_{t+1},P_{t})$.
    Then by $(p,\beta)$-jumbledness,
    \begin{align*}
        p|N_{t+1}||P_{t}|-\beta\sqrt{|N_{t+1}||P_{t}|}\leq e(N_{t+1},P_t)\leq e(N_{t+1},N_t)\leq  p|N_{t+1}||N_t|+\beta\sqrt{|N_{t+1}||N_t|}.
    \end{align*}
Dividing both ends of the inequality above by $\sqrt{|N_{t+1}|}$ gives
\begin{align*}
    p\sqrt{|N_{t+1}|}\left(|P_{t}|-|N_t|\right)\leq 2\beta \left(\sqrt{|N_t|}+\sqrt{|P_{t}|}\right) \le 4\beta \sqrt{n}.
\end{align*}
Thus,
\begin{align*}
    \sqrt{|N_{t+1}|}\leq \frac{4\beta\sqrt{n}}{p\sum_v s_t(v)}
    \leq \frac{\sqrt{\delta n}}{2}.
\end{align*}
Hence, $|N_{t+1}|\leq \delta n/4$, which means $\sum s_{t+1}(v) = n-2|N_{t+1}|\geq (1-\delta/2)n$.

\medskip

\noindent (ii) Each vertex $v\in N_{t+1}$ has at least $\deg(v)/2$ neighbours in $N_t$.
As the minimum degree of $G$ is at least $\delta np$, this means 
$e(N_t,N_{t+1})\geq \frac{\delta np}{2}|N_{t+1}|$.
Combining this with $(p,\beta)$-jumbledness yields
\begin{align*}
    \frac{\delta np}{2}|N_{t+1}|\leq e(N_t,N_{t+1})\leq  p|N_t||N_{t+1}|+\beta\sqrt{|N_t||N_{t+1}|}.
\end{align*}
As $\sum_{v}s_{t}(v)\geq (1-\alpha) n$, it follows that $|N_t|\leq \alpha n/2$. Thus,
\begin{align*}
    \sqrt{|N_{t+1}|}\left(\frac{\delta np}{2}-p|N_t|\right)\leq \beta \sqrt{|N_t|}\leq \beta\sqrt{\alpha n/2}.
\end{align*}
On the other hand, by $\alpha\leq \eps/2$ and $|N_t|\leq \alpha n/2$, 
\begin{align*}
    \frac{\delta np}{2}-p|N_t|
    \geq \frac{\delta np}{4},
\end{align*}
which means $\frac{\delta np}{4}\sqrt{|N_{t+1}|}\leq \beta\sqrt{\alpha n/2}$. % i.e., 
%$|N_{t+1}|\leq \frac{32\alpha \beta^2}{np^2}$. 
Hence, $\sum_{v}s_{t+1}(v)=n-2|N_{t+1}| \geq \left(1-\alpha\cdot \frac{16\beta^2}{\delta^2 n^2p^2}\right)n$.
\end{proof}

\begin{proof}[Proof of~\cref{thm:main_reduced}]
Choose $\lambda'$ such that $p\geq \lambda'n^{-3/5}\log n$ according to~\cref{lem:day2_bias} so that $\sum_{v}\tilde{s}_{2}(v)\geq \alpha pn^{3/2}$ with probability at least $1-\varepsilon$.
On the other hand, by~\cref{lem:sqrtnp}, $G(n,p)$ is w.h.p. a $(p,\beta)$-jumbled graph with minimum degree at least $\delta np$, where $\beta=O(\sqrt{np})$ and $\delta=0.9$. 
As $p\gg n^{-2/3}$,
\begin{align*}
    \sum_{v}s_2(v)\geq \alpha pn^{3/2} \gg \frac{8\beta}{p\sqrt{\delta}} = O\left(\sqrt{\frac{n}{p}}\right)
\end{align*}
and therefore,~\cref{lem:Zehmakan}~(i) proves that $\sum_v s_3(v)\geq (1-\delta/2)n$. Then iterating \cref{lem:Zehmakan}~(ii) for $k$ times gives 
\begin{align*}
    \sum_v s_{k+3}(v)\geq \left(1-\frac{\delta}{2}\left(\frac{16\beta^2}{\delta^2 n^2p^2}\right)^k\right) n.
\end{align*}
If $k= 3$, then $\left(\frac{16\beta^2}{\delta^2 n^2p^2}\right)^k<1/n$ and thus, $G(n,p)$ reaches unanimity on Day $6$ with probability at least $1-\varepsilon$.
\end{proof}

\section{Concluding remarks}\label{sec:conclude}

\textbf{Why $\bm{p\geq n^{-3/5}\log n}$?} 
The logarithm appears in the bound because of the Chernoff bound, which might be a purely technical reason. 
In fact, the statement of~\cref{thm:main} can be easily strengthened without too much effort. First, $p\geq \lambda' n^{-3/5}\log^{4/5}n$ suffices to guarantee probability $1-\varepsilon$ for unanimity to occur. Second, in \cref{sec:day0}, one may take $c$ that tends to $0$ slowly, e.g., $c=1/\log\log n$, to turn the main theorem into a w.h.p. statement too, while losing $(\log n)^{o(1)}$-factor in the lower bound for $p$. 
We however did not bother leaving the logarithmic factors as simple as it is, since the exponent $-3/5+o(1)$ does not seem to be tight even without the $o(1)$-factor.

The key technical bottleneck in improving the exponent $-3/5+o(1)$ is the use of Chebyshev's inequality to conclude the proof of \cref{lem:day0_main}.
We believe that our moments estimation is as accurate as possible except polylogarithmic factors. Thus, as long as one follows our proof outline and uses the Chebyshev inequality together with the Berry--Esseen bound, it may be difficult to improve the main term $-3/5$ in the exponent.

Even if one overcomes such technical obstacles and goes beyond $-3/5$, the next by far more challenging problem may be to reach beyond the exponent $-2/3$. Indeed, there are several points in our argument that uses $p\gg n^{-2/3}$, but most importantly, the shift of magnitude $pn^{3/2}$ given in~\cref{lem:day0_main} (and the same number in~\cref{lem:day2_bias} too) becomes void if $p\ll n^{-2/3}$. Overall, we suspect that improving the exponents $-3/5$ or $-2/3$ will require a substantially new approach.

\medskip

\noindent\textbf{The optimal initial bias.}
%It is also natural to ask for the optimal initial bias $\sum_{v}s_0(v)$ that guarantees unanimity in $G(n,p)$.
In~\cite{TV19}, Tran and Vu showed that $\sum_v s_0(v) =\Omega(1/p)$ is enough to guarantee unanimity to appear with probability $1-\varepsilon$ in majority dynamics on $G(n,p)$, for any $p\geq (2+o(1))(\log n)/n$, which generalizes the Fountoulakis--Kang--Makai theorem. 
Our result proves that, for $p=\Omega(n^{-3/5+o(1)})$,  the initial bias $\Omega(\sqrt{n})$ that can be smaller than $1/p=O(n^{3/5-o(1)})$ also suffices to guarantee the same conclusion. Furthermore, it also generalises to
\begin{theorem}
If $|\sum_{v}s_0(v)|=\Omega\left(n^{-1/4}p^{-5/4}\log n\right)$ and $\frac{\log^4 n}{n} \ll p\leq \lambda n^{-1/2}$, then majority dynamics on $G(n,p)$ admits unanimity with probability $1-\varepsilon$.
\end{theorem}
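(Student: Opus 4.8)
The plan is to rerun \cref{sec:day0,sec:day1+} with two substitutions: the initial bias $\sqrt n$ is replaced by the general bias $b:=|\sum_v s_0(v)|$, and the threshold quantity $p^{3/2}n$ that governs \cref{sec:day0} is replaced by $\kappa:=p^{3/2}n^{1/2}b$. By symmetry and by monotonicity of $\PP[\mathcal U\mid|S_0|]$ in $|S_0|$ as in~\eqref{eq:s0}, it suffices to handle the case where $b$ is a large constant multiple of $n^{-1/4}p^{-5/4}\log n$; then $pb=\Theta\big((\log^4 n/np)^{1/4}\big)=o(1)$ by the hypothesis $p\gg\log^4 n/n$, so that $\log^2 n\ll\kappa\le\sqrt{np}$ --- precisely the regime in which the Berry--Esseen arguments of \cref{sec:day0} are valid. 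As before, split Day $0$ into a balanced morning opinion $r_0$ with $\ceil{n/2}$ ones followed by the flip of $K:=\floor{b/2}$ swing vertices to obtain $\tilde s_0$, and call $v$ \emph{$\gamma$-almost-positive} if $\sum_{w\in N(v)}r_1(w)>-\gamma\kappa$.

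First I would reprove \cref{lem:day0_main} with $\kappa$ replacing $p^{3/2}n$. The moment estimates of \cref{claim:1st_moment,claim:2nd_moment,claim:mu+-} concern only $r_1$ and go through verbatim, and the covariance bound $\mathrm{Var}\big(\sum_u\mathbf 1_{A_u}\big)=O\big(n^{3/2}\log^2 n/\sqrt p\big)$ of \cref{claim:covariance} survives because $\kappa\le\sqrt{np}$ keeps $x_\pm=(-\gamma\kappa-\mu_\pm)/\sqrt{np}=O(1)$ and $|\gamma\kappa+\mu_\pm|=O(\sqrt{np})$. The only genuinely new point is the final step: since $|\mu_++\mu_-|=O(\log^2 n)\ll\kappa$ one has $x_++x_-\le-\Omega(pb)<0$, so \cref{lem:psi}(ii) yields $\Psi(x_+)+\Psi(x_-)\ge1+\Omega(pb)$ and hence $\sum_u\PP[A_u]\ge n/2+\Omega(pnb)$ (the error $O(\log^2 n\sqrt{n/p})$ is negligible as $\kappa\gg\log^2 n$). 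Chebyshev's inequality then bounds the probability that fewer than $n/2+\Omega(pnb)$ vertices are $\gamma$-almost-positive by
\[
O\!\left(\frac{n^{3/2}\log^2 n/\sqrt p}{(pnb)^2}\right)=O\!\left(\frac{\log^2 n}{p^{5/2}n^{1/2}b^2}\right),
\]
which is below $\varepsilon$ once the constant in $b=\Theta(n^{-1/4}p^{-5/4}\log n)$ is large enough. This is the only place the hypothesis on $|\sum_v s_0(v)|$ is used, and it is the crux of the argument and the main obstacle: the second moment method pits a ``signal'' of size $pnb$ against a fixed polylogarithmic ``noise'', and the stated bias threshold is exactly what this balance demands --- everything else is bookkeeping.

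Next I would redo \cref{lem:day2_bias}: as $pK=o(1)$, an unstable vertex $w\in N_-(v)$ acquires a swing neighbour with probability $1-e^{-\Theta(pK)}=\Theta(pb)$, so $\EE[X_w]=\Theta(pb)\cdot\Theta(1/\sqrt{np})$ and $\sum_{w\in N_-(v)}X_w$ stochastically dominates a binomial of mean $\Theta(\sqrt{np}\cdot pb)=\Theta(\kappa)\gg\log n$; the Chernoff bound then gives, w.h.p.\ for every $v$, at least $c'\kappa$ unstable swing-neighboured vertices in $N_-(v)$. Feeding the rescaled \cref{lem:day0_main} with $\gamma=c'/2$, and using that each such $w$ has $\sum_{u\in N(w)}\tilde s_0(u)\ge2$ so $\tilde s_1(w)=+1$ while $\tilde s_1\ge r_1$ pointwise by coordinatewise monotonicity of the update rule, one gets $\tilde s_2(v)=+1$ for at least $n/2+\Omega(pnb)$ vertices $v$, hence $\sum_v\tilde s_2(v)=\Omega(pnb)=\Omega(\kappa\sqrt{n/p})$. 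Finally, since $\kappa\gg1$ we have $\sum_v\tilde s_2(v)\gg\sqrt{n/p}=\Theta\big(\beta/(p\sqrt\delta)\big)$, so \cref{lem:Zehmakan}(i) forces $\sum_v\tilde s_3(v)\ge(1-\delta/2)n$; iterating \cref{lem:Zehmakan}(ii), whose contraction factor $16\beta^2/(\delta^2 n^2p^2)=O(1/np)$ lies below $1$ because $np\gg1$, drives the minority set to $\emptyset$ after finitely many further rounds. Transferring back through the reduction~\eqref{eq:s0} gives unanimity with probability $1-\varepsilon$, as claimed.
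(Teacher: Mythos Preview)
Your proof sketch is correct and follows precisely the route the paper has in mind: the paper states this theorem in the concluding remarks without proof, merely asserting that the main argument ``generalises to'' it, and your sketch carries out exactly that generalisation --- replacing the swing count $c\sqrt{n}$ by $K=\lfloor b/2\rfloor$ and the threshold $p^{3/2}n$ by $\kappa=p^{3/2}n^{1/2}b$ throughout \cref{sec:day0,sec:day1+}. Your identification of the Chebyshev step as the crux is spot on, and your check that the hypotheses $p\gg\log^4 n/n$ and $b=\Omega(n^{-1/4}p^{-5/4}\log n)$ are exactly what is needed to make (respectively) $\kappa\gg\log^2 n$ and the Chebyshev error $O\big(\log^2 n/(p^{5/2}n^{1/2}b^2)\big)<\varepsilon$ is the substance of the argument; the explicit appeal to coordinatewise monotonicity of the update to get $\tilde s_1\ge r_1$ is a detail the paper glosses over in the proof of \cref{lem:day2_bias}.
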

\noindent This improves the Tran--Vu theorem in the suggested range of $p$ and can be seen as a positive evidence for the conjecture by Berkowitz and Devlin~\cite[Conjecture 8]{BD20}, which states that the initial bias can be as small as one whenever $p\geq (1+o(1))(\log n)/n$.

\vspace{5mm}

\noindent\textbf{Acknowledgements.} Part of this work was carried out while the second and the third authors visited the other authors at IBS Daejeon.

\bibliographystyle{abbrv}
\bibliography{reference}

\end{document}